\newtheorem{lemma}{Lemma}[section]
\newtheorem{theorem}[lemma]{Theorem}
\newtheorem*{theorem*}{Theorem}
\newtheorem{corollary}[lemma]{Corollary}
\newtheorem{proposition}[lemma]{Proposition}
\newtheorem*{proposition*}{Proposition}
\newtheorem{problem}{Problem}
\newtheorem*{problem*}{Problem}
\theoremstyle{definition}
\newtheorem*{claim*}{Claim}
\newtheorem*{definition}{Definition}
\newtheorem*{remark}{Remark}
\newtheorem*{remarks}{Remarks}
\newcommand{\wrE}{\mathbb{E}^{w_r}}
\newcommand{\wE}{\mathbb{E}^{w}}
\newcommand{\C}{{\mathbb C}}
\newcommand{\E}{{\mathbb E}}
\newcommand{\N}{{\mathbb N}}
\newcommand{\Q}{{\mathbb Q}}
\newcommand{\R}{{\mathbb R}}
\renewcommand{\S}{\mathbb{S}}
\newcommand{\T}{{\mathbb T}}
\newcommand{\Z}{{\mathbb Z}}
\newcommand{\U}{{\mathbb U}}
\newcommand{\CP}{{\mathcal P}}
\newcommand{\CX}{{\mathcal X}}
\newcommand{\bM}{{\mathbf{M}}}
\newcommand{\bN}{{\mathbf{N}}}
\newcommand{\norm}[1]{\left\Vert #1\right\Vert}
\DeclareMathOperator{\id}{id}
\begin{document}

\title[Furstenberg systems of Hardy field sequences]{Furstenberg systems of Hardy field sequences and applications}
\thanks{The author was supported  by the
	Hellenic Foundation for
	Research and Innovation, 
	Project
	No: 1684.}

\author{Nikos Frantzikinakis}
\address[Nikos Frantzikinakis]{University of Crete, Department of mathematics and applied mathematics, Voutes University Campus, Heraklion 71003, Greece} \email{frantzikinakis@gmail.com}
\begin{abstract}
We study measure preserving systems, called Furstenberg systems,  that model the statistical behavior of   sequences defined by smooth  functions with  at most  polynomial growth. Typical examples
are the sequences  $(n^\frac{3}{2})$, $(n\log{n})$, and
$([n^\frac{3}{2}]\alpha)$, $\alpha\in \mathbb{R}\setminus\mathbb{Q}$, where the entries are taken $\mod{1}$.
 We show that their Furstenberg systems arise from unipotent transformations on  finite dimensional  tori with some invariant measure that is absolutely continuous with respect to the Haar measure and deduce that they are disjoint from every ergodic system.
We also study similar problems for sequences of the form
$(g(S^{[n^{\frac{3}{2}}]} y))$, where $S$ is a measure preserving transformation on the probability space  $(Y,\nu)$, $g\in L^\infty(\nu)$,
 and $y$ is a typical point in $Y$. We prove that the corresponding  Furstenberg systems are strongly stationary and deduce from this a multiple ergodic theorem and a multiple recurrence result for   measure preserving transformations of zero entropy that do not satisfy any commutativity conditions.
\end{abstract}


\subjclass[2010]{Primary: 37A45; Secondary:    28D05, 11K06, 11L03.}

\keywords{Hardy field, equidistribution,  fractional powers, ergodic averages, Furstenberg correspondence, nilsystems}

\maketitle

\centerline{{\em Dedicated to the memory of Michael Boshernitzan}}

\section{Introduction and main results}\label{S:MainResults}

\subsection{Introduction}
A well known  observation of Furstenberg is that   the statistical
behavior of the sequence $(p(n))$ on $\T$ (or the sequence $(e^{2\pi i p(n)})$ on $\S^1$), where $p\in \R[t]$ is an arbitrary polynomial
with real coefficients,
can be modeled by dynamical systems of algebraic nature
(see \cite[Theorem~3.13]{Fu}).
For instance, the statistical behavior of the sequence  $(n^2\alpha)$ on $\T$, where  $\alpha\in \R$, can be modeled by the measure preserving system $(\T^2,m_{\T^2}, S)$ where $m_{\T^2}$ is the Haar measure and $S\colon \T^2\to \T^2$ is defined by
$$
S(x,y):=(x+\alpha,y+x), \qquad x,y\in \T.
$$
To be more precise, for every $f\in C(\T)$, there exists $g\in C(\T^2)$ (in fact, we can choose $g(x,y):=f(y)$, $x,y\in \T$)  such that if $a(n):=f(n^2\alpha)$, $n\in\N$, then
$$
\lim_{N\to\infty}\frac{1}{N} \sum_{n=1}^N \prod_{j=1}^\ell a(n+n_j)=\int \prod_{j=1}^\ell g(S^{n_j}(x,y))\, dm_{\T^2}(x,y)
$$
holds for all $\ell\in \N$ and $n_1,\ldots, n_\ell\in \Z$.

Constructing similar statistical models
 of dynamical nature, which we will later call ``Furstenberg systems'', for other sequences of interest in analytic number theory,
is an intriguing problem that has recently attracted a lot of attention. For instance, it is
conjectured that the Liouville function $\lambda$ can be modeled by a  Bernoulli system (this is equivalent to a conjecture of  Chowla) and  the M\"obius function $\mu$ can be modeled by the direct product of a procyclic system and a Bernoulli system (see \cite{AKLR14,S,Sa12}). At the moment only partial
information about the structure of such measure preserving systems is available; see for example \cite{FH18} for some related progress and
\cite{FH19,GLR20} for some recent results related to more general bounded multiplicative functions.

In this article
we seek to construct dynamical models for sequences arising from smooth non-oscillating functions with at most polynomial growth,
and for convenience we take them to belong to some Hardy field. Typical examples include the sequence
 $(n^{\frac{3}{2}})$ and the sequence $([n^{\frac{3}{2}}]\alpha)$, $\alpha\in\R\setminus\Q$, which are thought of as sequences on $\T$.
We will see  that the first sequence can be modeled by the non-ergodic measure preserving  system  $(\T^2,m_{\T^2}, S)$ where  $S\colon \T^2\to \T^2$ is defined by
$$
S(x,y):=(x,y+x), \qquad x,y\in \T.
$$
The second sequence can be modeled by the direct product of two systems of the previous form.
We  get  similar results when $n^{\frac{3}{2}}$ is replaced by  $n^a$ with $a\in \R_+\setminus \Z$, but with $S$ replaced by a (non-ergodic) unipotent transformation on $\T^d$ where $d:=[a]+1$ (see \eqref{E:unipotent} for the exact form). We also obtain  results  when $n^{\frac{3}{2}}$ is replaced by $n\log n$, or  $n(\log{n})^{\frac{1}{2}}$, or  $n^2\alpha+n^{\frac{3}{2}}$, where $\alpha$ is irrational,  and, perhaps surprisingly,  it turns out that these  four sequences have different dynamical models that are representative for general Hardy field sequences with  at most polynomial growth.
The reader will find  comprehensive results
in Theorems~\ref{T:HardySequence}  and \ref{T:HardySequence'}. 
Using these results and a disjointness argument, we deduce in Corollary~\ref{C:13}  that for all $a\in \R_+\setminus \Z$  and  $\alpha\in \R\setminus \Z$ we have
 	$$
 \lim_{N\to\infty} \frac{1}{N}\sum_{n=1}^N e^{2\pi i n^a} w(n)=0 \qquad \text{ and } \qquad  \lim_{N\to\infty} \frac{1}{N}\sum_{n=1}^N e^{2\pi i [n^a]\alpha} w(n)=0
 $$
 for every ergodic sequence $w\colon \N\to \U$ (a notion defined in Section~\ref{SS:Defs}). Interestingly, in the previous statement  the sequence $n^a$ can be  replaced  by  $n\log n$ but not by $n(\log n)^b$ for any $b<1$, the reason being that all dynamical
  models of the first sequence are disjoint from all ergodic systems but
  some  models of the second sequence are ergodic.

Moreover, we study similar problems for sequences of the form $(g(S^{[n^a]}y))$ where $a\in \R_+\setminus \Z$, $(Y,\nu,S)$ is an arbitrary measure preserving  system, $g\in L^\infty(\nu)$, and $y$ is a typical point in $Y$.
Although it seems hard to determine the exact structure of the  dynamical models of such sequences, we show in Theorem~\ref{T:HardyIterates} that they enjoy a dilation invariance property called ``strong stationarity'' (defined in Section~\ref{SS:sst}), a property that is  not always shared by dynamical models of the above sequences when $a$ is a positive integer.
An important point is that strongly stationary systems
have trivial spectrum and their ergodic components are direct products of infinite-step nilsystems and Bernoulli systems, and  these structural properties imply disjointness from all ergodic zero-entropy systems. This  allows us to deduce in Corollary~\ref{C:HardyIterates}
that if $T,S$ are arbitrary ergodic measure preserving transformations acting on a probability space $(X,\CX,\mu)$ and  the transformation $T$  has zero entropy, then
 for all $a\in \R_+\setminus \Z$ and   $f,g\in L^\infty(\mu)$ we have
$$
	\lim_{N\to\infty} \frac{1}{N}\sum_{n=1}^N T^nf \cdot  S^{[n^a]}g =\int f\, d\mu \int g\, d\mu	
	$$
where the limit is taken in $L^2(\mu)$. We stress that we impose no commutativity assumptions on $T,S$.  Such multiple ergodic theorems are rather rare because the usual toolbox that enables us to study convergence problems for such  averages requires that $T$ and $S$ generate a nilpotent group. Lastly, we note that  for $a\in [1,+\infty)$ the previous averages may diverge
 if we drop the zero-entropy assumption on $T$ (see \cite[Example~7.1]{Be}  and \cite[Section~4]{FLW11}).

\subsection{Definitions and notation} \label{SS:notation}
In order to  facilitate exposition, we introduce some definitions and notation.

For  $N\in\N$  we let  $[N]:=\{1,\dots,N\}$. Let $a\colon \N\to \C$ be a  bounded sequence.   If $A$ is a non-empty finite subset of $\N$  we let
$$
\E_{n\in A}\,a(n):=\frac{1}{|A|}\sum_{n\in A}\, a(n).
$$
We also use a similar notation for finite averages of measures. If $A$ is an infinite subset of $\N$ we let
$$
\E_{n\in A}\, a(n):=\lim_{N\to\infty} \E_{n\in A\cap [N]}\, a(n)
$$
if  the limit exists.

If $(M_k)_{k\in\N}$ is a strictly increasing sequence of positive integers we denote with $\bM$ the sequence of intervals $([M_k])_{k\in\N}$. If $a\colon \N\to \C$ is a bounded sequence we let
$$
\E_{n\in\bM}\, a(n):=\lim_{k\to\infty} \frac{1}{M_k}\sum_{n\in[M_k]}a(n)
$$
if the limit exists.

If $a,b\colon \R_+\to \R$ are functions we write
\begin{itemize}
\item  $a(t)\prec b(t)$ if $\lim_{t\to +\infty} a(t)/b(t)=0$;


\item   $a(t)\sim b(t)$ if $\lim_{t\to +\infty} a(t)/b(t)$ exists and is non-zero;

\item  $a(t)\ll b(t)$ if there exists $C>0$ such that $|a(t)|\leq C|b(t)|$ for all large enough $t\in \R$.
\end{itemize}
In particular, $a(t)\prec 1$ means that $\lim_{t\to +\infty}a(t)=0$. We say that the function  $a\colon \R_+\to \R$ has {\em at most  polynomial growth} if there exists $d\in \N$ such that $a(t)\prec t^d$.

With $\N$ we denote the set of positive integers and with $\Z_+$ the set of non-negative integers.

We often denote sequences on $\N$ or on $\Z$ by  $(a(n))$, instead of $(a(n))_{n\in\N}$ or $(a(n))_{n\in \Z}$; the domain of the sequence is going to be clear from the context.

With  $\R_+$ we denote the set of non-negative real numbers.  For $t\in \R$ we let $e(t):=e^{2\pi i t}$. With   $[t]$ we denote the integer part of $t$ and with $\{t\}$ the fractional part of $t$.

We denote with $\S^1$ the complex unit circle and with $\U$ the complex unit disc. With $\T$ we denote the one dimensional torus and we often identify it with $\R/\Z$ or  with $[0,1)$.
 We often denote elements of $\T$ with  real numbers but we are implicitly  assuming that these real numbers are taken  $\mod{1}$.

\subsection{Results about Hardy field sequences}\label{SS:HardySequence}
We start with a result that describes the possible dynamical systems that
 model the statistical behavior of
Hardy field sequences (see definition in Section~\ref{S:Hardy}) with at most polynomial growth taken $\mod{1}$.  The role of these ``dynamical models'' play the ``Furstenberg systems''
that are associated with these sequences via a variant of a correspondence principle due to Furstenberg; we refer the reader to Section~\ref{SS:Defs} for the definition
and basic facts regarding these  systems.

 It turns out that  the possible Furstenberg systems
   admit an algebraic characterization and have the form  ${\bf X}_d:=(\T^{d+1}, \lambda\times m_{\T^{d}}, S_d)$, where $d$ is the ``degree'' of the sequence,  $\lambda$ is a probability measure on $\T$, and  $S_d$ is the unipotent homomorphism of $\T^{d+1}$  defined by
	\begin{equation}\label{E:unipotent}
S_d(y_0,\ldots, y_d):=(y_0,y_1+y_0,\ldots, y_d+y_{d-1}), \quad y_0,\ldots, y_d\in \T.
\end{equation}
 Note that the measure $\lambda\times m_{\T^{d}}$ is $S_d$-invariant and the system  ${\bf X}_d$ is non-ergodic unless $\lambda$ is a point mass (in which case it  is ergodic if and only if  $\lambda=\delta_\alpha$ for some irrational $\alpha\in \T$).
For a given Hardy field function $a\colon \R_+\to \R$ with at most polynomial growth, the following result determines the structure of all possible Furstenberg systems of
the sequence $(a(n))$ on $\T$ and related sequences.

\begin{theorem}\label{T:HardySequence}
	Let $a\colon \R_+\to \R$ be a Hardy field function with at most polynomial growth and $b\colon\N\to \T$ or $\S^1$ be defined  by $b(n):=a(n) \mod{1}$ 
	 or  $b(n):=e(a(n))$, $n\in \N$.
	\begin{enumerate}
		\item If $t^d\log{t} \prec a(t)\prec t^{d+1}$ for some $d\in \Z_+$, then   $(b(n))$ has a unique Furstenberg system that 
		 is isomorphic to
		the system ${\bf X}_d$   defined above with $\lambda:=m_\T$.

		\item If   $a(t)\sim t^d\log{t}$ for some  $d\in \Z_+$, then   $(b(n))$ does not have  a unique Furstenberg system, and any  Furstenberg system  of $(b(n))$ is  isomorphic to  the system ${\bf X}_d$   defined above for some probability measure   $\lambda \ll m_{\T}$.

		\item If $t^d \prec a(t)\prec t^d\log{t}$ for some $d\in \Z_+$, then   $(b(n))$ does not have  a unique Furstenberg system, and any Furstenberg system  of $(b(n))$  is  isomorphic to  the system ${\bf X}_d$   defined above with $\lambda=\delta_t$ for some $t\in \T$ (and for any such $b$ all measures $\delta_t$, $t\in \T$, arise).
			
		
		\item If  $a(t)=t^d\alpha+\tilde{a}(t)$ for some $d\in \Z_+$ where $\tilde{a}(t)\prec t^d$ and $\alpha$ is irrational, then $(b(n))$ has a unique Furstenberg system  that
		 is  isomorphic to  the system ${\bf X}_d$   defined above where $\lambda=\delta_{\frac{\alpha}{d!}}$,  in particular, it is isomorphic to  a totally ergodic affine transformation on $\T^d$ with the Haar measure.

		
		\item If none of the above applies, then   $a(t)=p(t)+\epsilon(t)+\tilde{a}(t)$ where  $p\in \mathbb{Q}[t]$, $\epsilon(t)\to 0$,  and  $\tilde{a}$ is a Hardy field function that is covered in cases  $(i)$-$(iv)$. In particular,  there exists $r\in \N$ such that for $k=0,\ldots, r-1$ the sequence $b(rn+k)$ is covered in cases  $(i)$-$(iv)$.			 	
	\end{enumerate}
\end{theorem}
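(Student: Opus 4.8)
The plan is to route the argument through the Furstenberg correspondence of Section~\ref{SS:Defs} and reduce the classification to the limiting joint distribution of the finite differences of $a$. Write $\Delta c(n):=c(n+1)-c(n)$, let $\omega:=(b(n))_{n}$, and let $T$ be the shift. Since $a\prec t^{d+1}$ forces $a^{(d+1)}(t)\prec 1$ in a Hardy field, Newton's forward-difference formula gives
\[
a(n+h)=\sum_{k=0}^{d}\binom{h}{k}\,\Delta^{k}a(n)+o(1)\qquad(n\to\infty),
\]
uniformly for $h$ in any fixed range. Comparing this with the orbit expansion $z_d\circ S_d^{\,h}=\sum_{k=0}^{d}\binom{h}{k}z_{d-k}$ of the model ${\bf X}_d$ suggests the map $\Phi(z_0,\dots,z_d):=\big(e(\sum_{k=0}^{d}\binom{h}{k}z_{d-k})\big)_{h\in\Z}$, under which $T^{n}\omega$ is approximated by $\Phi(\Delta^{d}a(n),\dots,\Delta a(n),a(n))\bmod 1$. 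Thus a measure is a Furstenberg measure of $(b(n))$ precisely when it is the $\Phi$-pushforward of a subsequential limit, along some $\bM$, of the empirical distributions of the tuple $(\Delta^{d}a(n),\dots,a(n))\bmod 1$, and the task becomes to show that every such limit has the form $\lambda\times m_{\T^{d}}$ with $\lambda$ as described.

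First I would dispose of the lower $d$ coordinates. For $k<d$ one has $\Delta^{k}a(n)=a^{(k)}(n)+o(1)$, and in each of the cases $(i)$–$(iv)$ the derivative $a^{(k)}$ grows at least like $t^{d-k}$, hence at least linearly and far faster than $\log t$. By Boshernitzan's equidistribution criterion for Hardy fields these coordinates equidistribute, jointly and independently of the slowly varying top coordinate, and so contribute the Haar factor $m_{\T^{d}}$. In Fourier terms this is the assertion that every self-correlation $\E_{n\in\bM}\,e(\sum_j m_j a(n+h_j))$ whose expansion carries a nonzero coefficient $\sum_j m_j h_j^{k}$ at some $k<d$ tends to $0$: the phase is then dominated by a derivative $a^{(k)}$ with $k<d$, which is equidistributed, so the average vanishes. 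This reduces the whole problem to the top coordinate $\Delta^{d}a(n)=a^{(d)}(n)+o(1)$, whose limit law is $\lambda$.

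The three analytic regimes are then governed by the single quantity $t\,a^{(d+1)}(t)$, equivalently the slope of $\log n(x)$ for $n(x):=(a^{(d)})^{-1}(x)$; a Hardy-field version of L'H\^opital's rule gives $\lim_{t}t\,a^{(d+1)}(t)=\lim_{t}a^{(d)}(t)/\log t$. In case $(i)$, where $a^{(d)}\succ\log t$, this limit is $+\infty$, the sampling density is asymptotically flat, $a^{(d)}(n)$ equidistributes, and $\lambda=m_{\T}$. In case $(ii)$, where $a^{(d)}\sim c\log t$, the limit equals $c$, so $n(x)\asymp e^{x/c}$ has density comparable across unit intervals and every subsequential limit is absolutely continuous, while $a^{(d+1)}(n)\to 0$ precludes atoms, giving $\lambda\ll m_{\T}$. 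In case $(iii)$, where $a^{(d)}\to\infty$ but $a^{(d)}\prec\log t$, the limit is $0$, so $n(x)$ grows faster than every exponential, the mass of $\{a^{(d)}(n)\}$ over $[1,M_k]$ piles up at $\{a^{(d)}(M_k)\}$, and every subsequential limit is a point mass $\delta_t$, with $t$ ranging over all of $\T$ as $\bM$ varies. Case $(iv)$ is the remaining $t\,a^{(d+1)}(t)\to 0$ alternative in which $a^{(d)}$ itself converges: then $a^{(d)}(n)\to d!\,\alpha\bmod 1$, $\lambda$ is a single irrational atom, and ${\bf X}_d$ collapses to the asserted totally ergodic affine transformation on $\T^{d}$.

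It remains to upgrade $\Phi$ from a factor map to an isomorphism and to settle case $(v)$. For the first point, $(z_0,\dots,z_d)\mapsto(z_d\circ S_d^{\,0},\dots,z_d\circ S_d^{\,d})$ is a unipotent integer change of variables, so the single observable $z_d$ generates the full Borel $\sigma$-algebra of ${\bf X}_d$ under $S_d$; hence the $\Phi$-image is all of ${\bf X}_d$ and the two systems are genuinely isomorphic. The uniqueness of the Furstenberg system in $(i)$ and $(iv)$ and its failure in $(ii)$ and $(iii)$ then mirror exactly whether $a^{(d)}(n)\bmod 1$ admits a genuine Ces\`aro limit distribution or only subsequential ones. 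Finally, case $(v)$ is a reduction rather than a new computation: the structure theory of Hardy fields writes $a(t)=p(t)+\epsilon(t)+\tilde a(t)$ with $p\in\Q[t]$, $\epsilon(t)\to 0$, and $\tilde a$ of one of the types $(i)$–$(iv)$; then $e(p(rn+k))$ is constant in $n$ for a suitable period $r$, the factor $e(\epsilon(rn+k))\to 1$, and each progression $n\equiv k\pmod r$ is left with the already-treated Hardy sequence $e(\tilde a(rn+k))$. I expect the genuinely hard part to be the two boundary regimes $(ii)$ and $(iii)$: proving absolute continuity, and especially the concentration that produces the point masses $\delta_t$, requires quantitative control of the empirical measures of $a^{(d)}(n)$ through the inverse function, and this—together with the joint equidistribution of the lower differences—is where Boshernitzan's criterion and a van der Corput/Weyl estimate for Hardy fields must carry the load.
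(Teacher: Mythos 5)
Your proposal is correct and follows essentially the same route as the paper: expand $a(n+h)$ to order $d$ (the paper uses the Taylor basis $a^{(i)}(n)h^i/i!$ where you use the Newton basis $\binom{h}{k}\Delta^k a(n)$, an immaterial change of variables), kill every correlation carrying a nonzero lower-order coefficient via Boshernitzan's criterion, identify the surviving data with the limit law $\lambda$ of the top coordinate, match this with the binomial orbit expansion of ${\bf X}_d$ through the injective map $\Phi$, and classify $\lambda$ by the three growth regimes of $a^{(d)}$ relative to $\log t$ exactly as in the paper's Lemma~\ref{L:sublinear}. The only slip is the assertion $\Delta^k a(n)=a^{(k)}(n)+o(1)$ for $k<d$, which is generally false (the error can grow), but it is harmless since only the common growth order of $\Delta^k a$ and $a^{(k)}$ enters the equidistribution argument.
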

\begin{remarks}
	$\bullet$ If $\phi\colon \T\to \C$ is Riemann-integrable, combining the previous result with Proposition~\ref{P:ImageRiemann} below
	we get similar results for the sequence $\phi(a(n))$.
	
	$\bullet$ The systems described in Part~$(i)$ turn out to be strongly stationary (see definition in Section~\ref{SS:sst}). For a related result covering  Hardy field sequences on nilmanifolds see Theorem~\ref{T:NilHardyIterates}
	below.
\end{remarks}



In order to prove the previous result we  show in Lemmas~\ref{L:33} and \ref{L:34} below that the sequence $(e(a(n)))$ has
the same statistical behavior as the sequence $(S_d^nf)$ where $S_d\colon \T^{d+1}\to \T^{d+1}$ is given by \eqref{E:unipotent}  and $f\colon \T^{d+1}\to \C$ is defined by
$f(y):=e(y_d)$ for $y=(y_0,\ldots, y_d)\in \T^{d+1}$.  A key tool that  we use in the proof of this fact is an  equidistribution result of Boshernitzan (see Theorem~\ref{T:Boshernitzan}) that helps us compute the correlations of the first sequence.

A consequence of the previous structural result is the following disjointness statement:
\begin{corollary}\label{C:disjoint}
	Let
	$a\colon \R_+\to \R$ be a  Hardy field function such that   $t^d\log{t}\prec a(t)\prec t^{d+1}$   or $a(t)\sim  t^d \log{t}$ for some  $d\in \Z_+$ and  $b\colon\N\to \T$ or $\S^1$ be defined  by $b(n):=a(n) \mod{1}$ 
	 or  $b(n):=e(a(n))$, $n\in \N$.
	Then all Furstenberg systems of the sequence $b$ are disjoint from all ergodic systems.
\end{corollary}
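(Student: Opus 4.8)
The plan is to combine the structural classification in Theorem~\ref{T:HardySequence} with a joining argument that exploits the non-atomicity of the base measure. By parts~$(i)$ and $(ii)$ of that theorem, under either hypothesis on $a$ every Furstenberg system of $b$ is isomorphic to ${\bf X}_d=(\T^{d+1},\lambda\times m_{\T^d},S_d)$ for some probability measure $\lambda\ll m_\T$ (with $\lambda=m_\T$ in the first case). Since disjointness from all ergodic systems is invariant under isomorphism, it suffices to prove that ${\bf X}_d$ is disjoint from every ergodic system $(Z,\rho,R)$ whenever $\lambda$ is non-atomic, which holds because $\lambda\ll m_\T$.

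First I would reduce to the ergodic components of ${\bf X}_d$. Since the first coordinate is $S_d$-invariant and $S_d$ restricts on each fibre $\{y_0=\alpha\}\cong\T^d$ to the unipotent affine map $R_\alpha(y_1,\dots,y_d):=(y_1+\alpha,y_2+y_1,\dots,y_d+y_{d-1})$, which is ergodic for $m_{\T^d}$ precisely when $\alpha$ is irrational, the ergodic decomposition of $\lambda\times m_{\T^d}$ is $\int_\T(\delta_\alpha\times m_{\T^d})\,d\lambda(\alpha)$, and non-atomicity of $\lambda$ forces $\lambda$-almost every $\alpha$ to be irrational. Given a joining $\eta$ of ${\bf X}_d$ and $Z$, I would decompose it into ergodic components; a standard extremality argument (uniqueness of ergodic decomposition applied to the marginals) shows that each component projects to $\rho$ on $Z$ and to some $\delta_\alpha\times m_{\T^d}$ on $\T^{d+1}$, with $\alpha$ distributed according to $\lambda$. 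Thus each component is an ergodic joining of $(\T^d,m_{\T^d},R_\alpha)$ with $(Z,\rho,R)$, and it suffices to show that for $\lambda$-almost every $\alpha$ this joining is the product, i.e. that $(\T^d,m_{\T^d},R_\alpha)$ is disjoint from $Z$; integrating the product components over $\alpha\sim\lambda$ then recovers $\eta=(\lambda\times m_{\T^d})\times\rho$.

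Because $(Z,\rho,R)$ is ergodic its point spectrum $\spec(Z)$ is countable, so for each $k\neq0$ the set $\{\alpha:\,e(k\alpha)\in\spec(Z)\}$ is countable; hence for $\lambda$-almost every $\alpha$ the values $e(k\alpha)$, $k\neq0$, avoid $\spec(Z)$. The crux is therefore the following statement, which I would prove by induction on $d$: if $\alpha$ is irrational and $e(k\alpha)\notin\spec(Z)$ for all $k\neq0$, then $(\T^d,m_{\T^d},R_\alpha)$ is disjoint from $Z$. For $d=1$, $R_\alpha$ is an irrational rotation; in any ergodic joining the function $g:=\mathbb{E}[e(ky_1)\mid\mathcal{B}_Z]$ (where $\mathcal{B}_Z$ is the sub-$\sigma$-algebra of the $Z$-coordinate) satisfies $g\circ R=e(k\alpha)\,g$, so it is either an eigenfunction of $Z$ with eigenvalue $e(k\alpha)$ or zero; the hypothesis forces $g=0$ for $k\neq0$, and as the $e(ky_1)$ span $L^2(m_\T)$ the joining is the product. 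For the inductive step I view $(\T^d,m_{\T^d},R_\alpha)$ as a circle extension of $(\T^{d-1},m_{\T^{d-1}},R_\alpha)$ in the top coordinate $y_d$; the induction hypothesis applies to the base (whose eigenvalues are again among the $e(k\alpha)$) and gives that the projection of the joining to $\T^{d-1}\times Z$ is the product measure. I would then expand $F:=\mathbb{E}[e(ky_d)\mid\mathcal{B}_{\T^{d-1}}\vee\mathcal{B}_Z]$ in a Fourier series in $y_d$; the identity $e(ky_d)\circ R_\alpha=e(ky_{d-1})e(ky_d)$ translates into a relation that shifts the Fourier mode by $k$ while preserving $L^2$-norms, so these norms are constant along each residue class modulo $k$, and square-summability forces them all to vanish, giving $F=0$ and the product joining.

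The step I expect to be the main obstacle is this last one, namely the ``higher'' coordinates $y_2,\dots,y_d$: unlike $e(y_1)$ these are not eigenfunctions but quasi-eigenfunctions with Lebesgue spectral type on each component, so the clean eigenvalue/countability dichotomy does not apply to them directly. The resolution is the circle-extension induction together with the Fourier-mode-shifting argument above, which converts the quasi-eigenfunction relation into square-summability of an infinite equinormed family and thereby forces vanishing; the non-atomicity of $\lambda$ is exactly what guarantees that the no-common-eigenvalue hypothesis holds for $\lambda$-almost every ergodic component.
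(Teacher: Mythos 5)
Your proof is correct, but it takes a genuinely different route from the paper's. The paper does not pass through the ergodic decomposition of the joining at all: it fixes a joining $\sigma$, approximates $f$ by characters, computes $f(S_d^ny)=e\bigl(qy_0n^{d'}+p_y(n)\bigr)$ explicitly, and then invokes Lesigne's polynomial Wiener--Wintner theorem \cite[Theorem~4]{L0} to get, for $\rho$-a.e.\ $z$ and every $y_0$ with $e(ky_0)\notin\spec(Z)$ for all $k\neq 0$, that $\E_{n\in[N]}f(S_d^ny)g(R^nz)\to 0$; bounded convergence then finishes. You instead decompose the joining into ergodic components, identify each as an ergodic joining of the fibre system $(\T^d,m_{\T^d},R_\alpha)$ with $(Z,\rho,R)$, and prove disjointness of these by an induction on circle extensions, using the conditional-expectation eigenfunction argument at the base and the Fourier-mode-shifting/square-summability argument at each extension step. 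Both arguments turn on exactly the same pivot --- continuity of $\lambda$ versus countability of the point spectrum of an ergodic system --- but yours is self-contained (it essentially reproves, in this special unipotent affine setting, the disjointness input that the paper outsources to \cite{L0}), while the paper's is shorter at the cost of citing a nontrivial pointwise ergodic theorem. Two small remarks: in the inductive step you should expand $F$ in a Fourier series in $y_{d-1}$ (the top coordinate of the base), not in $y_d$, on which $F$ does not depend --- the rest of your description makes clear this is what you intend; and your induction as stated starts at $d=1$, so the case $d=0$ (where $S_0$ is the identity on $(\T,\lambda)$ and each ergodic component is a one-point system) should be noted separately, though it is immediate.
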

\begin{remark}
	If $t^d\prec a(t)\prec t^d\log{t}$ for some $d\in \Z_+$, then as shown in Part~$(iii)$ of Theorem~\ref{T:HardySequence} some of the Furstenberg systems of the sequence $(b(n))$ are ergodic.
	\end{remark}
Using the previous result and a disjointness argument we get the following:

\begin{corollary}\label{C:13}
	Let
	$a\colon \R_+\to \R$ be a  Hardy field function such that    $t^d\log{t}\prec a(t)\prec t^{d+1}$ for some $d\in \Z_+$  or $a(t)\sim  t^d \log{t}$ for some  $d\in \N$, and let $b(n):=e(a(n))$ or $b(n):=e([a(n)]\alpha)$, $n\in\N$, where $\alpha \in \R\setminus \Z$. Then
	\begin{equation}\label{E:w}
		\lim_{N\to\infty} \frac{1}{N}\sum_{n=1}^N  b(n) \, w(n) =0
	\end{equation}
	for every ergodic sequence $w\colon \N\to \U$.
\end{corollary}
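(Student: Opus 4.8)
The plan is to deduce \eqref{E:w} from a general orthogonality principle phrased through disjointness, to verify its hypotheses for $b(n)=e(a(n))$ using the results already established, and then to reduce the sequence $b(n)=e([a(n)]\alpha)$ to that case by Fourier analysis. First I would isolate the following principle, which is the ``disjointness argument'' alluded to above: if a bounded sequence $b\colon\N\to\U$ has the property that, along every $\bM$ for which the relevant correlations converge, each of its Furstenberg systems $(X,\mu,T)$ is disjoint from all ergodic systems and the function $F$ representing $b$ satisfies $\int F\,d\mu=0$, then $\E_{n\in[N]}\,b(n)\,w(n)\to 0$ for every ergodic sequence $w$. To prove this I would argue by contradiction: if $\limsup_N|\E_{n\in[N]}b(n)w(n)|>0$, pass to a sequence of intervals $\bM$ realizing this $\limsup$ along which the correlations of $b$, of $w$, and the joint correlations of $(b,w)$ all converge (such $\bM$ exists by weak-$*$ compactness). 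The correspondence principle then yields a Furstenberg system $(X,\mu,T)$ of $b$ with representing function $F$, a Furstenberg system $(Y,\nu,R)$ of $w$ with representing function $G$, and a joining $\rho$ on $X\times Y$ with $\E_{n\in\bM}b(n)w(n)=\int F\otimes G\,d\rho$. As $w$ is ergodic, $(Y,\nu,R)$ is ergodic, so by hypothesis $\rho=\mu\times\nu$, whence $\int F\otimes G\,d\rho=\big(\int F\,d\mu\big)\big(\int G\,d\nu\big)=0$, a contradiction.

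Next I would verify these hypotheses for every sequence $e(\beta a(n))$ with $\beta\in\R\setminus\{0\}$, noting that $\beta a$ again lies in case $(i)$ or $(ii)$ of Theorem~\ref{T:HardySequence}; the choice $\beta=1$ gives $b(n)=e(a(n))$. Disjointness from all ergodic systems is exactly Corollary~\ref{C:disjoint} applied to $\beta a$. For the vanishing mean, Theorem~\ref{T:HardySequence} identifies the representing function with $e(y_d)$ on ${\bf X}_d$, and in both cases the top coordinate $y_d$ is Haar distributed — in case $(ii)$ this is where the hypothesis $d\ge 1$ enters — so $\int e(y_d)\,d(\lambda\times m_{\T^d})=0$. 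The orthogonality principle then gives $\E_{n\in[N]}e(\beta a(n))\,w(n)\to 0$. Applying the vanishing-mean conclusion to $ka$ for every integer $k\neq 0$ and invoking Weyl's criterion, I also record that $a(n)\bmod 1$ is equidistributed, in both cases $(i)$ and $(ii)$.

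Finally I would treat $b(n)=e([a(n)]\alpha)$. Writing $[a(n)]=a(n)-\{a(n)\}$ factors it as $e([a(n)]\alpha)=e(\alpha a(n))\,\psi(a(n))$, where $\psi\colon\T\to\S^1$, $\psi(u)=e(-\alpha\{u\})$, is Riemann integrable with a single jump at $0$. Let $\sigma_M\psi=\sum_{|m|\le M}c_{M,m}\,e(m\,\cdot)$ be its Fej\'er means, so $\sigma_M\psi\to\psi$ in $L^1(m_\T)$ and each $\sigma_M\psi$ is a trigonometric polynomial. Then
\[
e(\alpha a(n))\,\sigma_M\psi(a(n))=\sum_{|m|\le M}c_{M,m}\,e((\alpha+m)a(n)),
\]
a finite sum whose frequencies $\alpha+m$ are all irrational, so each $(\alpha+m)a$ lies in case $(i)$ or $(ii)$; by the previous step $\E_{n\in[N]}e((\alpha+m)a(n))\,w(n)\to 0$, hence $\E_{n\in[N]}e(\alpha a(n))\,\sigma_M\psi(a(n))\,w(n)\to 0$ as $N\to\infty$ for each fixed $M$. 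Since $|w|\le 1$,
\[
\limsup_{N\to\infty}\big|\E_{n\in[N]}b(n)w(n)\big|\le\limsup_{N\to\infty}\E_{n\in[N]}\big|\psi-\sigma_M\psi\big|(a(n)),
\]
and because $a(n)\bmod 1$ is equidistributed and $|\psi-\sigma_M\psi|$ is Riemann integrable, the right-hand side equals $\|\psi-\sigma_M\psi\|_{L^1(m_\T)}$, which tends to $0$ as $M\to\infty$. This proves \eqref{E:w} for this $b$ as well.

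The step I expect to be the main obstacle is the sequence $e([a(n)]\alpha)$, precisely because $[a(n)]\alpha$ is not itself a Hardy field sequence and so is covered neither by Theorem~\ref{T:HardySequence} nor by Corollary~\ref{C:disjoint}. The Fourier reduction is what turns it into a superposition of the admissible sequences $e((\alpha+m)a(n))$, and the delicate point is the \emph{uniform} control of the Fej\'er error: it succeeds only because the top coordinate of the Furstenberg systems carries Haar measure, which forces genuine equidistribution of $a(n)\bmod 1$ even in case $(ii)$, where the Furstenberg system itself is not unique. A secondary point needing care is the orthogonality principle, namely the fact that an ergodic sequence $w$ produces only ergodic Furstenberg systems, so that Corollary~\ref{C:disjoint} may legitimately be invoked inside the joining.
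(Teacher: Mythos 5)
Your proposal is correct and follows essentially the same route as the paper: a contradiction/disjointness argument that reduces $\E_{n\in\bM}\,b(n)w(n)$ to a product of means via Corollary~\ref{C:disjoint} (with the mean of $b$ vanishing by equidistribution), followed by the factorization $e([a(n)]\alpha)=e(\alpha a(n))\psi(a(n))$ and trigonometric approximation of $\psi$ in $L^1(m_\T)$, where the paper uses a generic continuous approximation plus uniform approximation by trigonometric polynomials and you use Fej\'er means. One trivial slip: the frequencies $\alpha+m$ need not be irrational when $\alpha$ is a non-integer rational --- what your argument actually uses, correctly, is only that $\alpha+m\neq 0$, so that $(\alpha+m)a$ still falls under cases $(i)$--$(ii)$.
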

\begin{remarks}
$\bullet$ Examples of ergodic sequences are all nilsequences, all bounded generalized polynomial sequences
(see \cite{BL07}), or more generally, sequences of the form $(\phi(S^ny))$ where $(Y,\nu,S)$ is a uniquely ergodic system, $y\in  Y$, and $\phi\colon \T\to \C$ is Riemann-integrable with respect to $\nu$. Also several multiple correlation
sequences are known to be ergodic, for example sequences of the form $\int \prod_{j=1}^\ell T_j^{p_j(n)}f_j\, d\mu $, where $T_1,\ldots, T_\ell$ are commuting measure preserving transformations acting on a probability space $(X,\mu)$, $f_1,\ldots, f_\ell\in L^\infty(\mu)$, and $p_1,\ldots, p_\ell\colon \Z\to \Z$ are polynomials (for a proof see \cite[Section~2.2 and Proposition~3.3]{LMR20}).

$\bullet$  If $a(t)\sim \log{t}$, then  our argument gives  for $b(n):= e(a(n))$ or  $b(n):= e([a(n)]\alpha)$,  $n\in\N$, with $\alpha$ irrational,
 that $$
 \lim_{N\to\infty} \big(\E_{n\in [N]} \, b(n) \, w(n)-\E_{n\in [N]} \, b(n) \cdot \E_{n\in [N]}\, w(n)\big)= 0.
 $$

$\bullet$
If $t^d\prec a(t)\prec t^d\log{t}$ for some $d\in \Z_+$, then it can be shown that  \eqref{E:w} fails for some ergodic
sequence $w\colon \N\to\U$. We briefly sketch the argument when    $b(n):=e(a(n))$, $n\in \N$,  and $d=1$.
In this case we have  $a(t):=ta_1(t)$
for some $a_1\colon \R_+\to \R$ with  $1\prec a_1(t)\prec \log{t}$.
  We can choose $M_k\to +\infty$
such that $\{a_1(M_k)\}\to \alpha$. We   let $w(n):=e(-a(n))$ if  $n\in [M_k/2,M_k]$ for some $k\in \N$, and $w(n):=e(-n\alpha)$ otherwise.  Then
 it can be shown that  the sequence $w$ has a unique Furstenberg system  and it is isomorphic 
to  the system $(\T,m_\T,S)$, where $Sx:=x-\alpha$, $x\in \T$
 (the argument is similar to the one used in the proof of Part~ $(iii)$ of Theorem~\ref{T:HardySequence}), hence it is ergodic.    But \eqref{E:w} fails since $\E_{M_k/2\leq n\leq M_k}\, b(n)\, w(n)= 1$  for every $k\in \N$.
\end{remarks}
Another consequence of Theorem~\ref{T:HardySequence} is that under certain growth conditions,
 equidistribution properties of Hardy field sequences remain valid even if one  samples the sequence along  an arbitrary ergodic subsequence.
\begin{corollary}\label{C:13'}
	Let
	$a\colon \R_+\to \R$ be a  Hardy field function such that   $t^d\log{t}\prec a(t)\prec t^{d+1}$ for some $d\in \Z_+$  or $a(t)\sim  t^d \log{t}$ for some  $d\in \N$. Then for every ergodic sequence $b\colon \N\to \N$ and  $\alpha\in \R\setminus\Q$, the  sequences   $\big((a\circ b)(n)\big)$  and $\big([(a\circ b)(n)]\alpha\big)$ are equidistributed  $ \!  \! \mod{1}$,
	and the sequence $\big([(a\circ b)(n)]\big)$ is equidistributed  $\!  \!  \mod{q}$ for every $q\in \N$.
\end{corollary}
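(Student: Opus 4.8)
The plan is to prove all three equidistribution statements through Weyl's criterion, reducing each of the resulting exponential sums---by a simple re-summation---to an instance of the orthogonality already established in Corollary~\ref{C:13}. We may assume that $b$ is the strictly increasing enumeration of an ergodic subsequence; set $B:=b(\N)$ and recall that by the definition of an ergodic subsequence (Section~\ref{SS:Defs}) the indicator $\one_B\colon\N\to\{0,1\}\subset\U$ is an ergodic sequence, with positive density $\delta:=\lim_{N\to\infty}\E_{n\in[N]}\,\one_B(n)>0$.

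By Weyl's criterion the three assertions amount to the following: $\E_{n\in[N]}\,e(m\,a(b(n)))\to 0$ for every $m\in\Z\setminus\{0\}$; $\E_{n\in[N]}\,e(m\,[a(b(n))]\,\alpha)\to 0$ for every $m\in\Z\setminus\{0\}$; and $\E_{n\in[N]}\,e(\tfrac{p}{q}\,[a(b(n))])\to 0$ for every $q\in\N$ and every $p\in\{1,\dots,q-1\}$, the last being the finite Fourier criterion for equidistribution $\mod{q}$. In every case the summand has the form $c(b(n))$, where $c\colon\N\to\S^1$ is one of the sequences $c(k):=e(m\,a(k))$, $c(k):=e([a(k)]\,(m\alpha))$, or $c(k):=e([a(k)]\,\tfrac{p}{q})$. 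Each such $c$ is covered by Corollary~\ref{C:13}: for $c(k)=e(m\,a(k))$ scaling $a$ by the nonzero constant $m$ preserves both growth hypotheses $t^d\log t\prec a\prec t^{d+1}$ and $a\sim t^d\log t$; for the other two sequences $a$ itself is unchanged, while since $\alpha$ is irrational and $0<p<q$ the coefficients $m\alpha$ and $\tfrac{p}{q}$ lie in $\R\setminus\Z$, exactly as required for the sequences $e([a(n)]\alpha)$ treated there.

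The bridge to sampling is the elementary identity
$$
\E_{n\in[N]}\,c(b(n))=\frac{b(N)}{N}\,\E_{k\in[b(N)]}\,c(k)\,\one_B(k),
$$
valid because $b$ is strictly increasing, so that $\{b(1),\dots,b(N)\}=B\cap[b(N)]$. As $N\to\infty$ we have $b(N)\to\infty$ and $N/b(N)=|B\cap[b(N)]|/b(N)\to\delta$, whence $b(N)/N\to\delta^{-1}$. On the other hand $\one_B$ is an ergodic sequence, so Corollary~\ref{C:13}, applied to each of the three sequences $c$ identified above, gives $\E_{k\in[M]}\,c(k)\,\one_B(k)\to 0$ as $M\to\infty$; specializing to $M=b(N)$ and combining with $b(N)/N\to\delta^{-1}$ yields $\E_{n\in[N]}\,c(b(n))\to\delta^{-1}\cdot 0=0$. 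This disposes of all three Weyl sums and hence of the corollary.

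I expect the only genuinely delicate point to be the role of the ergodic subsequence $b$ through its indicator: one needs to know that $\one_B$ qualifies as an ergodic sequence (so that Corollary~\ref{C:13} applies to it) and that $B$ has positive density (so that the normalizing factor $b(N)/N$ converges rather than diverging). Both are built into the notion of an ergodic subsequence, but it is worth isolating them, since a density-zero range would turn the decisive product into an indeterminate $\infty\cdot 0$ and would demand quantitative control of the decay of $\E_{k\in[M]}\,c(k)\,\one_B(k)$ that the qualitative statement of Corollary~\ref{C:13} does not supply.
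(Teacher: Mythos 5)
Your proof is correct and follows essentially the same route as the paper: Weyl's criterion, the re-summation identity $\E_{n\in[N]}c(b(n))=\frac{b(N)}{N}\E_{k\in[b(N)]}c(k)\one_B(k)$ (the paper states its limit form $d(E)\cdot\E_{n\in\N}e(ka(b(n)))=\E_{n\in\N}\big(e(ka(n))\one_E(n)\big)$), and an application of Corollary~\ref{C:13} with the ergodic weight $\one_B$. Your version is slightly more explicit about the finite normalization $b(N)/N\to d(B)^{-1}$ and about which rescaled sequences $e(ma(k))$, $e([a(k)]m\alpha)$, $e([a(k)]p/q)$ fall under Corollary~\ref{C:13}, but the substance is identical.
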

\begin{remarks}
	$\bullet$ The case where $b(n)=n$, $n\in \N$, follows from the equidistribution result of  Boshernitzan stated in Theorem~\ref{T:Boshernitzan}. Other examples of ergodic sequences of integers  include the sequences $b(n)=[n\alpha+\beta]$, $n\in \N$, where $\alpha>0$ and $\beta\in \R$. More generally, if $(Y,\nu,S)$ is a uniquely ergodic system,  $U$ is a set of positive measure with boundary of measure zero,  $y_0\in Y$, and $E:=\{n\in\N\colon S^ny_0\in U\}$, then $E$ has positive density and the sequence formed by taking the elements of $E$ in increasing order is an ergodic sequence of integers.
	
$\bullet$ The conclusion fails if $t^d\prec a(t)\prec t^d\log{t}$ for some $d\in \Z_+$, for reasons similar to those described in the third remark after Corollary~\ref{C:13}.
\end{remarks}

Finally, using Theorem~\ref{T:HardySequence} we can also describe the structure of Furstenberg systems
of sequences of the form $([a(n)]\alpha)$ on $\T$, where $a\colon \R_+\to \R$ is a Hardy field function with at most polynomial growth and $\alpha\in \R$. For simplicity we restrict our analysis to the  special case where $t^d\log{t}\prec a(t)\prec t^{d+1}$ for some $d\in \Z_+$ and irrational $\alpha$.
\begin{theorem}\label{T:HardySequence'}
	Let   $a\colon \R_+\to \R$ be a Hardy field function such that   $t^d\log{t}\prec a(t)\prec t^{d+1}$ for some $d\in \Z_+$. Let 	
	$\phi\colon \T\to \C$ be Riemann-integrable,
	and $b(n):=\phi([a(n)]\alpha)$, $n\in\N$,   for some  $\alpha \in \R\setminus \Q$
	and 	$S_d\colon \T^{d+1}\to\T^{d+1}$  be given by \eqref{E:unipotent}. Then
	$b(n)$ has a unique Furstenberg system and it  
	is a factor of the system $(\T^{2(d+1)},m_{\T^{2(d+1)}},S_d\times S_d)$.
\end{theorem}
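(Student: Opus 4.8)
The plan is to reduce $b$ to the \emph{joint} statistical behaviour of the two Hardy field sequences $\big(a(n)\bmod 1\big)$ and $\big(a(n)\alpha\bmod 1\big)$, and to exhibit $b$ as a Riemann-integrable observable of their common Furstenberg system. The starting point is the congruence
\begin{equation*}
[a(n)]\,\alpha=a(n)\alpha-\{a(n)\}\,\alpha\equiv \big(a(n)\alpha\big)-\alpha\,\{a(n)\}\pmod 1 ,
\end{equation*}
which shows that $b(n)=\Psi\big(a(n)\bmod 1,\,a(n)\alpha\bmod 1\big)$, where, identifying $\T$ with $[0,1)$, we set $\Psi(s,t):=\phi\big(t-\alpha s\big)$. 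Since $\phi$ is Riemann-integrable and $(s,t)\mapsto t-\alpha s$ (with $s\in[0,1)$) is continuous off the $m_{\T^2}$-null set $\{s=0\}$, the function $\Psi$ has $m_{\T^2}$-null discontinuity set and is therefore Riemann-integrable on $\T^2$. Granting that the pair $\big(a(n)\bmod1,\,a(n)\alpha\bmod1\big)$ has a unique joint Furstenberg system isomorphic to $\big(\T^{2(d+1)},m_{\T^{2(d+1)}},S_d\times S_d\big)$, with the last coordinate of the first (resp.\ second) $\T^{d+1}$-factor modelling $a(n)$ (resp.\ $a(n)\alpha$), the theorem follows at once: by the image machinery of Section~\ref{SS:Defs} together with Proposition~\ref{P:ImageRiemann}, the sequence $b$ has a unique Furstenberg system, namely the factor of $S_d\times S_d$ generated by the Riemann-integrable observable $(x,y)\mapsto\Psi(x_d,y_d)$ and its $(S_d\times S_d)$-iterates. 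A factor generated by a single observable is a factor of the ambient system, which yields the asserted factor of $\big(\T^{2(d+1)},m_{\T^{2(d+1)}},S_d\times S_d\big)$.

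To identify the joint system I would match all joint correlations against those of $S_d\times S_d$ with Haar measure, following the method of Lemmas~\ref{L:33} and \ref{L:34}. By density of trigonometric polynomials (and Proposition~\ref{P:ImageRiemann} to descend to Riemann-integrable observables) it suffices to treat characters of $\T^2$: for $k_j,l_j\in\Z$ and $n_1,\dots,n_\ell\in\Z$ one computes
\begin{equation*}
\lim_{N\to\infty}\E_{n\in[N]}\prod_{j=1}^{\ell}e\big(k_ja(n+n_j)+l_ja(n+n_j)\alpha\big)=\lim_{N\to\infty}\E_{n\in[N]}\,e\Big(\sum_{j=1}^{\ell}c_j\,a(n+n_j)\Big),
\end{equation*}
where $c_j:=k_j+l_j\alpha$. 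Since $a(t)\prec t^{d+1}$ forces $a^{(d+1)}(t)\prec 1$ in a Hardy field, a Taylor expansion gives $\sum_j c_j a(n+n_j)=\sum_{i=0}^{d}\frac{\gamma_i}{i!}a^{(i)}(n)+o(1)$ as $n\to\infty$, with $\gamma_i:=\sum_j c_j n_j^{\,i}$. Because $\alpha$ is irrational, all $\gamma_i$ vanish if and only if $\sum_j k_j n_j^{\,i}=0$ and $\sum_j l_j n_j^{\,i}=0$ for every $0\le i\le d$, in which case the average tends to $1$; otherwise, letting $m$ be the least index with $\gamma_m\ne 0$, the combination $\sum_{i\ge m}\frac{\gamma_i}{i!}a^{(i)}$ is a Hardy field function of growth $\sim a^{(m)}(t)$, and $t^d\log t\prec a(t)$ propagates to $t^{d-m}\log t\prec a^{(m)}(t)\prec t^{d-m+1}$, so Boshernitzan's theorem (Theorem~\ref{T:Boshernitzan}) gives equidistribution and the average tends to $0$. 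On the dynamical side, recalling \eqref{E:unipotent} one has $(S_d^{\,n}y)_d=\sum_{i=0}^{d}\binom{n}{i}y_{d-i}$, so splitting $m_{\T^{2(d+1)}}$ as a product shows that $\int\prod_j e\big(k_j(S_d^{n_j}x)_d+l_j(S_d^{n_j}y)_d\big)\,dm_{\T^{2(d+1)}}$ equals $1$ precisely when $\sum_j k_j\binom{n_j}{i}=0$ and $\sum_j l_j\binom{n_j}{i}=0$ for all $0\le i\le d$, and equals $0$ otherwise. As $\{\binom{t}{i}\}_{i\le d}$ and $\{t^i\}_{i\le d}$ span the same space, these vanishing conditions coincide with the previous ones, the two families of correlations agree, and, since the limits exist along the full sequence $([N])$, the joint sequence has the claimed unique Furstenberg system.

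The main obstacle is the correlation computation of the second paragraph, specifically the passage from integer to the real frequencies $c_j=k_j+l_j\alpha$: one must check that every nontrivial combination $\sum_{i\ge m}\frac{\gamma_i}{i!}a^{(i)}$ lands in a growth regime to which Boshernitzan's theorem applies, which requires a controlled comparison of the growth of successive derivatives of $a$. Here the lower bound $t^d\log t\prec a(t)$ is used in full strength: it is exactly what guarantees $a^{(d)}(t)\succ\log t$ and thereby excludes the degenerate case $1\prec a^{(d)}(t)\prec\log t$, in which the relevant average would converge to a nonzero constant, reflecting the point-mass behaviour of case~$(iii)$ of Theorem~\ref{T:HardySequence}. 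A secondary technical point, absorbed by Proposition~\ref{P:ImageRiemann}, is that $\Psi$ is merely Riemann-integrable and discontinuous across $\{s=0\}$; one must verify that its discontinuities form a set of measure zero for the limiting Haar measure, so that correlations of $\Psi$ are still computed by integration over $\T^{2(d+1)}$, which is where the irrationality of $\alpha$ and the absolute continuity of the joint distribution enter.
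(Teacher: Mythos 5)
Your proposal is correct and follows essentially the same route as the paper: reduce $b$ via Proposition~\ref{P:ImageRiemann} to the joint Furstenberg system of the pair $(a(n)),(a(n)\alpha)$ on $\T$, and identify that joint system with $(\T^{2(d+1)},m_{\T^{2(d+1)}},S_d\times S_d)$ by matching correlations through Taylor expansion, the irrationality of $\alpha$, and Boshernitzan's theorem (this is exactly the content of the paper's Proposition~\ref{P:fractional3} combined with Lemmas~\ref{L:33} and \ref{L:34}). The only cosmetic difference is that you fold the paper's two-step reduction (first $\phi$ applied to the equidistributed sequence $[a(n)]\alpha$, then $[a(n)]\alpha=\psi(a(n),a(n)\alpha)$ with $\psi(x,y)=y-\{x\}\alpha$) into the single Riemann-integrable observable $\Psi(s,t)=\phi(t-\alpha s)$.
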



\subsection{Results about Hardy field iterates}
Let $(Y,\nu,S)$ be a measure preserving system and $g\in L^\infty(\nu)$. The next result gives structural information
 on the Furstenberg systems of sequences of the form $(g(S^{[a(n)]}y))$ for typical values of $y\in Y$ (we refer the reader to Sections~\ref{SS:mps} and \ref{SS:Defs}
 for explanations regarding the terminology used).
\begin{theorem}\label{T:HardyIterates}
	Let $a\colon \R_+\to \R$ be a  Hardy field function such that $t^{d+\varepsilon}\prec a(t)\prec t^{d+1}$ for some $d\in \Z_+$ and $\varepsilon>0$. Furthermore, let  $(Y,\nu,S)$ be a measure preserving system.
	Then every strictly increasing sequence of positive integers $(N_k)$ has a subsequence $(N_k')$ such that for almost every $y\in Y$ and for
	every $g\in L^\infty(\nu)$  the sequence $(g(S^{[a(n)]}y))$ admits correlations  on $\bN':=([N'_k])_{k\in\N}$ and the corresponding Furstenberg system  has trivial spectrum,\footnote{We say that a system $(X,\mu,T)$ has trivial spectrum if $Tf=e^{2\pi i \alpha}f$ for some $\alpha \in [0,1)$ and  non-zero $f\in L^2(\mu)$,  implies that $\alpha=0$.}   and its ergodic components are isomorphic to direct products of infinite-step nilsystems and Bernoulli systems.
\end{theorem}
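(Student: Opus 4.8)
The plan is to isolate a single structural input and reduce the entire statement to it: a measure preserving system whose generating process is \emph{strongly stationary} (in the sense of Section~\ref{SS:sst}) automatically has trivial spectrum and ergodic components isomorphic to direct products of infinite-step nilsystems and Bernoulli systems. This is exactly the structural principle for strongly stationary systems recorded in the introduction. Hence, once I show that the Furstenberg systems of $(g(S^{[a(n)]}y))$ are strongly stationary for a.e.\ $y$, the conclusion is purely formal. So the real work splits into (a) producing these Furstenberg systems along one common subsequence for a.e.\ $y$, and (b) proving strong stationarity.

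For (a) I would run the Furstenberg correspondence along a subsequence. Assuming, as we may, that $Y$ is a compact metric system, fix a countable uniformly dense family $\mathcal D\subset C(Y)$ and enumerate all finite configurations $(k_1,\dots,k_\ell)$ with entries in $\Z$ together with all tuples of functions from $\mathcal D$. For each such datum the average
\[
\E_{n\in[N_k]}\prod_{j=1}^\ell g_j\big(S^{[a(n+k_j)]}y\big)
\]
is a bounded measurable function of $y$, so weak-$L^2(\nu)$ compactness yields $L^2$-convergent subsequences and hence a.e.-convergent further subsequences; diagonalizing over the countable index set produces $(N_k')$ along which \emph{all} these averages converge for $\nu$-a.e.\ $y$, with a single exceptional null set. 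For such $y$ the limits are the correlations of $(g(S^{[a(n)]}y))$ on $\bN'$ and define its Furstenberg system in the usual way. This step is routine.

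The heart is (b). Via the correspondence, strong stationarity is equivalent to the dilation identity
\[
\E_{n\in\bN'}\prod_{j=1}^\ell g_j\big(S^{[a(n+k_j)]}y\big)=\E_{n\in\bN'}\prod_{j=1}^\ell g_j\big(S^{[a(n+qk_j)]}y\big)
\]
for every $q\in\N$, all configurations, and a.e.\ $y$. To expose the invariance I would Taylor-expand inside the Hardy field: since $t^{d+\varepsilon}\prec a(t)\prec t^{d+1}$, the derivatives $a^{(i)}$ for $i>d$ have negative degree, so with $\beta_i(n):=a^{(i)}(n)/i!$ one has $a(n+k)=\sum_{i=0}^{d}\beta_i(n)\,k^{i}+o(1)$. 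Thus $[a(n+k_j)]$ is governed, up to a vanishing error, by the vector $\bbeta(n)=(\beta_0(n),\dots,\beta_d(n))$ read $\!\!\mod 1$, and Boshernitzan's equidistribution theorem (Theorem~\ref{T:Boshernitzan}), applied jointly to the Hardy functions $\beta_0,\dots,\beta_d$, gives that $\bbeta(n)$ equidistributes on $\T^{d+1}$ with respect to Haar measure (the growth hypothesis is precisely what rules out rational-polynomial resonances among the $\beta_i$). The decisive algebraic point is that replacing $k_j$ by $qk_j$ replaces $\beta_i(n)$ by $q^i\beta_i(n)$ coordinatewise, and multiplication by $q^i$ preserves Haar measure on $\T$; this is exactly the mechanism that makes the systems ${\bf X}_d$ of \eqref{E:unipotent} strongly stationary, and it is what I would transport here.

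The main obstacle is that the observable is \emph{not} a continuous function of the equidistributing quantity $\bbeta(n)$: it involves the integer iterate $S^{[a(n+k_j)]}y$ of the \emph{arbitrary} transformation $S$, and $m\mapsto S^m y$ is neither continuous nor a priori equidistributed along the specific integer sequences $[a(n+k_j)]$. I would therefore prove a joint equidistribution statement for a.e.\ $y$: that the tuples $\big(S^{[a(n+k_1)]}y,\dots,S^{[a(n+k_\ell)]}y,\bbeta(n)\big)$ equidistribute in $Y^{\ell}\times\T^{d+1}$ with respect to a measure whose $Y^{\ell}$-marginal is governed by $S$ and whose dependence on the configuration enters only through $\bbeta(n)$ in the Haar-invariant manner above. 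This coupling of Hardy field equidistribution with the pointwise behaviour of an unstructured $S$-orbit is the technically demanding part; I expect to obtain it by a van der Corput/PET reduction to exponential sums $\E_{n}\,e^{2\pi i\,\xi\cdot\bbeta(n)}\,\Phi(\cdots)$ controlled by Theorem~\ref{T:Boshernitzan}, together with a Wiener--Wintner type argument handling the $S$-part uniformly off a null set. Granting this, the correlations become manifestly invariant under $k_j\mapsto qk_j$, strong stationarity follows, and the structural theorem for strongly stationary systems recalled above delivers the trivial spectrum and the nilsystem-by-Bernoulli description of the ergodic components.
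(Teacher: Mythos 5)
Your high-level architecture is the same as the paper's: establish strong stationarity of the Furstenberg systems and then invoke the structure theorem for strongly stationary systems (Theorem~\ref{T:Sst}) to get trivial spectrum and the nilsystem-by-Bernoulli description of ergodic components. That reduction is correct. However, both of your substantive steps have genuine gaps. In step (a), weak-$L^2$ compactness of the bounded functions $y\mapsto \E_{n\in[N_k]}\prod_j g_j(S^{[a(n+k_j)]}y)$ does \emph{not} yield a.e.\ convergent further subsequences: weak convergence in $L^2$ gives no pointwise information (think of $e^{2\pi i nx}\rightharpoonup 0$ with no a.e.\ convergent subsequence), and a bounded sequence in $L^2$ need not have a norm-convergent subsequence. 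The only way to extract a single subsequence along which all these averages converge a.e.\ is to first know that they converge in $L^2$-\emph{norm}, which is exactly the (deep) mean convergence theorem of Bergelson--Moreira--Richter (Theorem~\ref{T:BMR}). So step (a) is not routine; it already requires the main external input of the proof.

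In step (b), the decisive difficulty is precisely the one you flag and then defer. Your proposed mechanism --- Taylor expansion, joint equidistribution of $\bbeta(n)$ on $\T^{d+1}$, and invariance of Haar measure under multiplication by $q^i$ --- only explains strong stationarity for observables that are characters of $a(n+k_j)$, i.e.\ the situation of Theorem~\ref{T:HardySequence} and the systems ${\bf X}_d$. For iterates $S^{[a(n+k_j)]}$ of an arbitrary $S$ the relevant correlations cannot be decoupled into an ``$S$-marginal'' times ``Haar on $\T^{d+1}$''; asserting a joint equidistribution in $Y^\ell\times\T^{d+1}$ of that product form is essentially assuming the conclusion. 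Moreover, a van der Corput/PET reduction to exponential sums plus a Wiener--Wintner argument cannot close this for $\ell\geq 2$: PET-type arguments reduce matters to Host--Kra uniformity seminorms, after which one must still analyze the structured component, which is an infinite-step nilfactor. This is why the paper's route goes through the characteristic-factor statement in Theorem~\ref{T:BMR} (functions orthogonal to the infinite-step nilfactor contribute $0$), then the Host--Kra structure theorem to reduce to ergodic nilsystems, and finally the nilmanifold equidistribution results of \cite{BMR20} (Proposition~\ref{P:NilHardyFormula}, transferred from weighted to Ces\`aro averages via Lemma~\ref{L:Wr} and the identity $a(n+kr)=(1+\Delta_r)^k a(n)$) to obtain the $r$-independence. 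Without these inputs the dilation identity you need is not established, so the strong stationarity claim --- and hence the theorem --- remains unproved.
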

\begin{remarks}
$\bullet$ 	It is expected  that for almost every $y\in Y$ for every $g\in L^\infty(\nu)$ the sequence $(g(S^{[a(n)]}y))$
	has a unique Furstenberg system; but  this is equivalent to a  pointwise convergence result for multiple ergodic averages that at the moment seems out of reach.

	$\bullet$ If $(Y,\nu,S)$ is a weak mixing system and $d\in \N$,
then using Theorem~\ref{T:BMR} below (or \cite[Theorem~A]{BH09}) it is not hard to show that  in the conclusion of Theorem~\ref{T:HardyIterates} all the Furstenberg  systems can be taken to be Bernoulli systems. On the other hand, if
$(Y,\nu,S)$ is a non-trivial infinite-step nilsystem, then it is possible to show that the corresponding Furstenberg systems are non-ergodic
and their  ergodic components are infinite-step nilsystems.

$\bullet$ At the expense of using  a different averaging scheme  we can relax the growth assumption on $a(t)$ to  the assumption	 $t^{d}\prec a(t)\prec t^{d+1}$ for some $d\in \Z_+$. To prove  this, two  modifications are needed in our argument:  The first is in the definition of Furstenberg systems, one has to use the weighted averages $\wE_{n\in\N}$  for $w:=a^{(d)}$ (see Section~\ref{SS:sstnil} for their definition)
in place of the usual Ces\`aro averages. The second is in Section~\ref{SS:sstnil}, one has to use the corresponding equidistribution results from \cite{BMR20} for the weighted averages.  The same comment applies for Corollary~\ref{C:HardyIterates}.	

$\bullet$ When   $a(n)=n^2$, $n\in\N$,  the corresponding Furstenberg systems may have non-trivial spectrum. For example let $S\colon \T\to\T$ be given by $Sx:=x+\alpha$, $x\in \T$, for some irrational $\alpha$,  and $g(y):=e^{2\pi i y}$, $y\in \T$. Then it is not hard  to show that for every $y\in \T$ the sequence $(g(S^{n^2}y))$ has a unique Furstenberg system and it is isomorphic to the system
$(\T^2,m_{\T^2},R)$ where  $R$ of $\T^2$ is defined by  $R(z,w):=(z+\alpha,w+z)$, $z,w\in \T$. We also remark
  that this system  is not strongly stationary; this  is in contrast with Theorem~\ref{T:SstHardyIterates} below (which covers the case of fractional powers).	
\end{remarks}
The proof of Theorem~\ref{T:HardyIterates} is less direct than the one of Theorem~\ref{T:HardySequence} because it appears to be hard to exhibit precise
systems that model the statistical behavior of the sequence $(g(S^{[a(n)]}y))$.
Instead, we proceed by showing in Theorem~\ref{T:SstHardyIterates}  that the Furstenberg systems
of such sequences are strongly stationary (a property that fails when the sequence  $(a(n))$ is polynomial). The proof of this fact follows from the multiple ergodic theorem of Proposition~\ref{P:sstformula}, which in turn is proved using  recent deep
 results of Bergelson, Moreira, and Richter~\cite{BMR20}, using the theory of characteristic factors of Host-Kra~\cite{HK05} and equidistribution results on nilmanifolds. The structure of strongly stationary systems was determined in \cite{FLW11,Jen97} and    we use these structural results as a black box   in order to complete the proof of Theorem~\ref{T:HardyIterates}.

Using the structural result of Theorem~\ref{T:HardyIterates} and a disjointness argument we deduce the following multiple ergodic theorem and a corresponding  multiple recurrence result:
\begin{corollary}\label{C:HardyIterates}
	Let $a\colon \R_+\to \R$ be a  Hardy field function such that $t^{d+\varepsilon}\prec a(t)\prec t^{d+1}$ for some $d\in \Z_+$ and $\varepsilon>0$. Furthermore,  let $(X,\CX,\mu)$ be a probability space and $T,S\colon X\to X$ be measure preserving transformations (not necessarily commuting). Suppose that the system $(X,\mu,T)$ has zero entropy. Then
	\begin{enumerate}
\item 		For every    $f,g\in L^\infty(\mu)$ we have
	\begin{equation}\label{E:formula'}
	\lim_{N\to\infty} \frac{1}{N}\sum_{n=1}^N
	T^nf \cdot  S^{[a(n)]}g =\E(f|\mathcal{I}_T) \cdot  \E(g|\mathcal{I}_S)
	\end{equation}
	where the limit is taken in $L^2(\mu)$.
\item For every $A\in \CX$ we have
	\begin{equation}\label{E:formula''}
\lim_{N\to\infty}\frac{1}{N}\sum_{n=1}^N
\mu(A\cap T^nA\cap S^{[a(n)]}A)\geq (\mu(A))^3.
	\end{equation}
\end{enumerate}
\end{corollary}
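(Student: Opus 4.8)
The plan is to deduce both statements from Theorem~\ref{T:HardyIterates} by a disjointness (decoupling) argument, treating part~(i) as the main content and obtaining part~(ii) from it. For part~(i), write $f_0:=\E(f|\mathcal{I}_T)$, $g_0:=\E(g|\mathcal{I}_S)$, $\tilde f:=f-f_0$, $\tilde g:=g-g_0$, so that $\E(\tilde f|\mathcal{I}_T)=0$ and $\E(\tilde g|\mathcal{I}_S)=0$. Since $f_0$ is $T$-invariant and $g_0$ is $S$-invariant, expanding $T^nf\cdot S^{[a(n)]}g$ produces four terms. The term $\frac1N\sum_n f_0 g_0=f_0 g_0$ is the claimed limit. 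The term $g_0\cdot\frac1N\sum_n T^n\tilde f$ converges to $g_0\cdot\E(\tilde f|\mathcal{I}_T)=0$ in $L^2(\mu)$ by the mean ergodic theorem. The term $f_0\cdot\frac1N\sum_n S^{[a(n)]}\tilde g$ converges to $0$ in $L^2(\mu)$ because the single average $\frac1N\sum_n S^{[a(n)]}\tilde g$ converges to $\E(\tilde g|\mathcal{I}_S)=0$; this follows from the spectral theorem together with Boshernitzan's equidistribution result (Theorem~\ref{T:Boshernitzan}), since $\frac1N\sum_n e([a(n)]\theta)\to 0$ for every $\theta\neq 0$ while the spectral measure of $\tilde g$ carries no atom at $0$. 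It remains to show that the cross term $B_N:=\frac1N\sum_n T^n\tilde f\cdot S^{[a(n)]}\tilde g$ tends to $0$ in $L^2(\mu)$.

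This is the heart of the matter, and I would argue via joinings. It suffices to show that every strictly increasing sequence $(N_k)$ admits a subsequence along which $B_N\to 0$ in $L^2$. Given $(N_k)$, Theorem~\ref{T:HardyIterates} (applied to $(X,\mu,S)$) yields a subsequence $\bN'=([N_k'])$ such that for $\mu$-a.e.\ $x$ the sequence $(\tilde g(S^{[a(n)]}x))$ admits correlations on $\bN'$, with Furstenberg system $\mathcal V_x$ of trivial spectrum whose ergodic components are products of infinite-step nilsystems and Bernoulli systems; as recorded in the introduction, such systems are disjoint from every ergodic zero-entropy system. For $\mu$-a.e.\ $x$ the sequence $(\tilde f(T^nx))$ also admits correlations (apply the pointwise ergodic theorem to the products $\prod_j T^{n_j}\tilde f$), and since a.e.\ point is generic for its ergodic component, its Furstenberg system $\mathcal U_x$ is a factor of the ergodic component of $(X,\mu,T)$ through $x$, hence ergodic of zero entropy. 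For such $x$, pass to a further subsequence along which the joint correlations of the pair $(\tilde f(T^nx),\tilde g(S^{[a(n)]}x))$ converge; the resulting system is a joining of $\mathcal U_x$ and $\mathcal V_x$, so by disjointness it is the product joining, and therefore $\frac1N\sum_n \tilde f(T^nx)\,\tilde g(S^{[a(n)]}x)$ converges to the product of the two means. One factor is $\lim_N\frac1N\sum_n\tilde f(T^nx)=\E(\tilde f|\mathcal{I}_T)(x)=0$, so the limit is $0$. As this holds along every further subsequence and the averages are bounded, $B_N(x)\to 0$ for a.e.\ $x$ along $\bN'$, and bounded convergence gives $B_N\to 0$ in $L^2$ along $\bN'$. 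This establishes~(i), with limit $f_0 g_0$.

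For part~(ii), apply~(i) with $f=g=\one_A$. Since $\mu(A\cap T^nA\cap S^{[a(n)]}A)=\int \one_A\cdot T^n\one_A\cdot S^{[a(n)]}\one_A\,d\mu$, the limit equals $\int \one_A\,\E(\one_A|\mathcal{I}_T)\,\E(\one_A|\mathcal{I}_S)\,d\mu$. It then suffices to prove the elementary inequality $\int \one_A\,p\,q\,d\mu\ge \mu(A)^3$, where $p:=\E(\one_A|\mathcal{I}_T)$ and $q:=\E(\one_A|\mathcal{I}_S)$. This admits a clean probabilistic reading: the left-hand side equals $\Theta(A\times A\times A)$ for the coupling that draws $x$ from $\mu$ and then, conditionally independently, a point in the $\mathcal{I}_T$-fiber and a point in the $\mathcal{I}_S$-fiber of $x$, and the claim is that this dominates the independent value $\mu(A)^3$. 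It follows from the Khintchine-type identities $\int \one_A p\,d\mu=\int p^2\,d\mu\ge\mu(A)^2$ and $\int \one_A q\,d\mu=\int q^2\,d\mu\ge\mu(A)^2$ combined with a convexity estimate, which I would verify directly; equality holds when $p$ and $q$ are constant, i.e.\ in the ergodic case.

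The step I expect to be the main obstacle is the decoupling of the cross term $B_N$: identifying the limit of the joint correlations as a joining of $\mathcal U_x$ and $\mathcal V_x$ and then invoking disjointness. This relies on three ingredients that must be assembled carefully — the genericity input (via the ergodic decomposition of $(X,\mu,T)$) that makes $\mathcal U_x$ ergodic of zero entropy for a.e.\ $x$, the structural disjointness supplied by Theorem~\ref{T:HardyIterates}, and the subsequence bookkeeping that upgrades a.e.\ convergence along $\bN'$ to $L^2$ convergence along the full sequence. In part~(ii) the only additional subtlety is securing the sharp exponent $3$, since a naive application of the Cauchy–Schwarz inequality only yields the weaker bound $\mu(A)^4$.
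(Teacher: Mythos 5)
Your proof of part~(i) is correct and is essentially the paper's argument: the paper likewise fixes a purportedly bad subsequence, invokes Theorem~\ref{T:HardyIterates} for the sequences $(g(S^{[a(n)]}x))$ and Proposition~\ref{P:ergodic} for $(f(T^nx))$, deduces disjointness of the two Furstenberg systems (citing \cite[Proposition~3.12]{FH18} for the structural fact you quote from the introduction), splits the joint correlation as a product of the individual limits, identifies those limits via the pointwise ergodic theorem and \cite{BKQW05}, and concludes with bounded convergence. Your preliminary decomposition $f=f_0+\tilde f$, $g=g_0+\tilde g$ and the separate treatment of the two cross terms is harmless but unnecessary, since the disjointness step already produces the product $\E(f|\mathcal{I}_T)(x)\cdot\E(g|\mathcal{I}_S)(x)$ for the undecomposed functions.

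The genuine gap is in part~(ii). Your reduction to the inequality $\int\one_A\,p\,q\,d\mu\ge\mu(A)^3$, with $p:=\E(\one_A|\mathcal{I}_T)$ and $q:=\E(\one_A|\mathcal{I}_S)$, is correct, and the inequality is true --- it is precisely \cite[Lemma~1.6]{Chu11}, which is all the paper invokes --- but your proposed justification does not close. The Khintchine identities $\int\one_A p\,d\mu=\int p^2\,d\mu\ge\mu(A)^2$ (and likewise for $q$) cannot be combined by ``a convexity estimate'' in any straightforward way: the Chebyshev-type route $\mu(A)\int\one_A pq\,d\mu\ge\int\one_A p\,d\mu\cdot\int\one_A q\,d\mu$ requires $p$ and $q$ to be positively correlated with respect to $\one_A\,d\mu$, which fails for general pairs of sub-$\sigma$-algebras, while the pointwise route via $\E(\one_A q|\mathcal{I}_T)\le p$ yields only $\mu(A)^4$, as you yourself note. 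A proof that actually gives the exponent $3$ is a specific chain of three Cauchy--Schwarz applications: with $t:=\int\one_A pq\,d\mu$ one has $\mu(A)^2\le\int\one_A(pq)^{1/2}d\mu\cdot\int\one_A(pq)^{-1/2}d\mu$; then $\int\one_A(pq)^{1/2}d\mu\le\mu(A)^{1/2}t^{1/2}$; and $\int\one_A(pq)^{-1/2}d\mu\le\bigl(\int\one_A p^{-1}d\mu\bigr)^{1/2}\bigl(\int\one_A q^{-1}d\mu\bigr)^{1/2}\le 1$, because $\int\one_A p^{-1}d\mu=\int p^{-1}\E(\one_A|\mathcal{I}_T)\,d\mu=\mu(\{p>0\})\le1$. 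Combining gives $t\ge\mu(A)^3$. Either supply this argument or cite Chu's lemma; as written, the crucial exponent-$3$ bound is asserted rather than proved.
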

\begin{remarks}
$\bullet$	Using weighted averages and the second remark after Theorem~\ref{T:HardyIterates} we can get a variant of \eqref{E:formula'} and use it to deduce   that
	if a Hardy field function $a\colon \R_+\to \R$   satisfies $t^d\prec a(t)\prec t^{d+1}$ for some $d\in \Z_+$, then  for every $A\in \CX$ and $\varepsilon>0$ we have
	$$
	\limsup_{N\to\infty}\frac{1}{N}\sum_{n=1}^N \mu(A\cap T^nA\cap S^{[a(n)]}A)\geq (\mu(A))^3-\varepsilon.
	$$

$\bullet$ The conclusion of Corollary~\ref{C:HardyIterates} fails if we do not assume that $(X,\mu,T)$ has  zero entropy, since for every strictly increasing sequence of integers $(b(n))$ and every $c\colon \N\to [-1,1]$
there exist Bernoulli systems $(X,\mu,T)$ and  $(X,\mu,S)$, and $f,g\in L^\infty(\mu)$, such that
 $c(n):=\int  T^nf\cdot  S^{b(n)}g\, d\mu$, $n\in\N$,   (see \cite[Section~4]{FLW11}), and as a consequence  the
 averages $\frac{1}{N}\sum_{n=1}^N\,\int  T^nf\cdot  S^{b(n)}g\, d\mu $
 do not always converge.

$\bullet$ If we assume that the transformations $T,S$ commute,  then it is known by  \cite{DerL96} that the Pinsker factor is characteristic for pointwise convergence of the averages in \eqref{E:formula'} and as a consequence for mean  convergence. Hence, in this case, we get mean convergence in  \eqref{E:formula'}, \eqref{E:formula''}  without the assumption that the system $(X,\mu,T)$ has zero entropy (but in the commutative case this  can also be obtained by using the method of \cite{F15}).
	\end{remarks}

\subsection{Open problems}
For a given ergodic system $(Y,\nu, S)$ and function  $g\in L^\infty(\nu)$,
it is also natural to study the possible Furstenberg systems of sequences of the form  $(g(S^{p(n)}y))$ where $p$ is a polynomial with integer coefficients and $y$ is a typical point in $Y$.
When $p(n)=n$,  it is an easy consequence of the pointwise ergodic theorem that    for almost every $y\in Y$ the sequence $(g(S^{n}y))$  has a unique Furstenberg system and it is a factor of the system $(Y,\nu, S)$ (see Proposition~\ref{P:ergodic}).
  The situation is dramatically different when one
  considers non-linear polynomials  in which case one expects sever restrictions
  on the structure of the possible Furstenberg systems. Furthermore, different Furstenberg systems arise than those arising in Theorem~\ref{T:HardyIterates}.
\begin{problem}
 Let $(Y,\nu, S)$ be a system,  $p\in \Z[t]$ be a non-linear polynomial, and $g\in L^\infty(\nu)$. Show that
 for almost every $y\in Y$ the sequence $(g(S^{p(n)}y))$ has a unique Furstenberg system that is
 ergodic and isomorphic to a direct product of an infinite-step nilsystem and a Bernoulli system.
\end{problem}
If one assumes in Problem 1 uniqueness of the Furstenberg system, then using the multiple ergodic theorem
from \cite{B87} it is easy to deduce that for $(Y,\nu, S)$  weak mixing,  and $p\in \Z[t]$ non-linear,  for almost every $y\in Y$ the Furstenberg system of the  sequence $(g(S^{p(n)}y))$ is a Bernoulli system.
On the other hand, proving uniqueness  of the Furstenberg system seems very hard as this amounts to proving a pointwise convergence result
for multiple ergodic  averages that currently seems out of reach. So as a first step for an unconditional result, one probably has to compromise with
a result in the spirit of Theorem~\ref{T:HardyIterates}  that describes some of the possible Furstenberg systems  of the sequence $(g(S^{p(n)}y))$.

Lastly, it would be interesting to know if a variant of Corollary~\ref{C:HardyIterates} holds when $a(t)$ is a polynomial; this is the context of the next problem.

\begin{problem}
 Let  $(X,\CX,\mu)$ be a probability space, and  $T,S\colon X\to X$ be measure preserving transformations. Suppose that the system $(X,\mu,T)$ has zero entropy  and $f,g\in L^\infty(\mu)$.

 \begin{enumerate}
 \item Is it true that
 the averages
 $$
\lim_{N\to\infty} \frac{1}{N}\sum_{n=1}^N
T^nf\cdot  S^{p(n)}g
$$
converge in $L^2(\mu)$ when $p(n)=n$ or $p(n)=n^2$?

\item Is it true that for every $A\in \CX$ with $\mu(A)>0$ there exists $n\in\N$ such that
$$
\mu(A\cap T^nA\cap S^{p(n)}A)>0
$$
 when $p(n)=n$ or $p(n)=n^2$?
 \end{enumerate}
\end{problem}
The method used to prove Corollary~\ref{C:HardyIterates} does not give a positive result in this case. The reason is that for typical $x\in X$ for every $f\in L^\infty(\mu)$ the Furstenberg systems of the sequences
$(f(T^nx))$ and $(g(S^{p(n)}x))$ are not always disjoint.
We also remark that  Questions $(i)$ and $(ii)$ have a negative answer if one drops the zero entropy assumption
(see \cite[Example~7.1]{Be} and  \cite[Page~40]{Fu}, or 	\cite{BL04},
  for $p(n)=n$, and \cite[Section~4]{FLW11} for general polynomial $p$).

One can also ask similar questions for averages of the form
 $$
\frac{1}{N}\sum_{n=1}^N
T^{[n^a]}f\cdot  S^{[n^b]}g
$$
where $a,b>1$ are distinct non-integers. We remark that if either $a$ or $b$ is in $(0,1)$, then a relatively  simple argument gives  mean convergence without any assumption on $T$ and $S$ (for $a,b\in (0,1)$ see  \cite[Proposition~6.4]{Fr10}  or
\cite{DKS20}).


\section{Background in ergodic theory}

\subsection{Measure preserving systems}\label{SS:mps}

Throughout the article, we make the standard assumption that all probability  spaces $(X,\CX,\mu)$ considered are Lebesgue, meaning, $X$  can be given the structure of a compact metric space
and $\CX$ is  its Borel $\sigma$-algebra.
A {\em measure preserving system}, or simply {\em a system}, is a quadruple $(X,\CX,\mu,T)$
where $(X,\CX,\mu)$ is a probability space and $T\colon X\to X$ is an invertible, measurable,  measure preserving transformation. We typically omit the $\sigma$-algebra $\CX$  and write $(X,\mu,T)$. The system is {\em ergodic} if the only sets that
are  left invariant by $T$ have measure $0$ or $1$. It  is {\em totally ergodic} if the system $(X,\mu,T^n)$ is ergodic for every $n\in \N$. It is {\em weak mixing} if the system $(X\times X, \mu\times \mu, T\times T)$ is ergodic. We say that $\lambda\in \mathbb{S}^1$ is an {\em eigenvalue} of $(X,\mu,T)$ if there exists non-zero $f\in L^2(\mu)$ such that $Tf=\lambda f$.
Throughout,  for $n\in \N$ we denote  by $T^n$   the composition $T\circ  \cdots \circ T$ ($n$ times) and let $T^{-n}:=(T^n)^{-1}$ and $T^0:=\id_X$. Also, for $f\in L^1(\mu)$ and $n\in\Z$ we denote by  $T^nf$ the function $f\circ T^n$.

In order to avoid unnecessary repetition,  we refer the reader to \cite{FH18, HK18}  for some other standard notions from ergodic theory.
In particular, the reader will find in Section~2 and
in  Appendix~A of \cite{FH18} the definition
of the terms factor,  conditional expectation with respect to a factor, isomorphism, inverse limit, infinite-step nilsystem, infinite-step nilfactor, Bernoulli system,    ergodic decomposition, joining, and disjoint systems; all these notions  are used in this article.

\subsection{Strong stationarity}\label{SS:sst}
We define here a notion that plays a crucial role in the proof of Theorem~\ref{T:HardyIterates} and Corollary~\ref{C:HardyIterates}.
\begin{definition}
	Let $(X,\mu,T)$ be a system. We say that
	\begin{itemize}
		\item a conjugation closed sub-algebra   $\mathcal{F}$ of $L^\infty(\mu)$ is {\em $T$-generating}, if the linear span of  elements of the form $T^nf$ with $f\in \mathcal{F}$ and $n\in \N$, is dense in $L^2(\mu)$.
		
		\item the system $(X,\mu,T)$  is {\em strongly stationary}, if there exists a $T$-generating set  $\mathcal{F}$  such that for every $r\in \N$ we have
		\begin{equation}\label{E:sst}
		\int \prod_{j=1}^\ell T^{n_j}f_j\, d\mu=\int \prod_{j=1}^\ell T^{rn_j}f_j\, d\mu
		\end{equation}
		for all $\ell\in \N$,  $n_1,\ldots, n_\ell\in \Z$, and
		$f_1,\ldots, f_\ell\in \mathcal{F}$.
	\end{itemize}
\end{definition}
\begin{remark}
	It follows from \cite{Jen97} that a system  $(X,\mu,T)$ is strongly stationary if and only if  there exists a $T$-generating set  $\mathcal{F}$ and measure preserving maps $\tau_n$ on $(X,\mu)$, $n\in \N$, such that
	$f(\tau_n x)=f(x)$, $f\in \mathcal{F}$, and $(T\tau_n)(x)=(\tau_nT^n)(x)$ for every  $x\in X$ and $n\in \N$.
\end{remark}

It is easy to verify that Bernoulli systems are strongly stationary.
It is shown in \cite{Jen97} that if an ergodic system is  strongly stationary, then it is necessarily Bernoulli. An example of a non-ergodic strongly stationary system is given by the transformation
$T\colon \T^2\to \T^2$ with the Haar measure $m_{\T^2}$, defined  by
$$
T(x,y):=(x,y+x), \qquad x,y\in \T.
$$
The reader can verify that the set $\mathcal{F}:=\{f(y)\colon f\in L^\infty(m_\T)\}$ is $T$-generating and \eqref{E:sst} is satisfied. In a similar fashion, it can be shown that the
systems ${\bf X}_d$, defined by the transformation $S_d$ in \eqref{E:unipotent},  are strongly stationary when we take the Haar measure $m_{\T^{d+1}}$ on $\T^{d+1}$.

The structure of general strongly stationary systems was determined in \cite{Fr04}. We will use the following structural consequence of
the main results in \cite{Fr04, Jen97}:
\begin{theorem}\label{T:Sst}
	Strongly stationary systems have trivial spectrum and their ergodic components are direct products
	of infinite-step nilsystems and Bernoulli systems.
\end{theorem}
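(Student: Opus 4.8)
The plan is to treat Theorem~\ref{T:Sst} as an assembly of two external structural results, quoted as black boxes: the theorem of Jenvey \cite{Jen97} that an \emph{ergodic} strongly stationary system is necessarily Bernoulli, and the structure theorem for general, possibly non-ergodic, strongly stationary systems from \cite{Fr04}. First I would pass to the ergodic decomposition $\mu=\int \mu_\omega\, dP(\omega)$ of $(X,\mu,T)$. The delicate point to keep in mind throughout is that the individual ergodic components $(X,\mu_\omega,T)$ need \emph{not} themselves be strongly stationary, since the defining identity \eqref{E:sst} is an integrated statement over all of $X$; hence one cannot simply apply Jenvey's ergodic result componentwise. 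Identifying the components as direct products of infinite-step nilsystems and Bernoulli systems is precisely the content of the main theorem of \cite{Fr04}, which I would invoke directly: the Bernoulli factor is detected by positive entropy while the complementary zero-entropy part is organized into an infinite-step nilsystem.

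For the trivial spectrum I would stress that this conclusion is \emph{not} a formal consequence of the component structure alone. Indeed, a direct integral of rotations $y\mapsto y+\alpha$ with a single fixed irrational $\alpha$ is a direct integral of one-step nilsystems, yet it has $e(\alpha)$ as a genuine eigenvalue; what rules this out in our setting is that strong stationarity forces the component spectra to spread out, so that no fixed $e(\alpha)$ with $\alpha\neq 0$ survives integration. Concretely, an eigenfunction $f$ of the whole system with $Tf=e(\alpha)f$ restricts to an $e(\alpha)$-eigenfunction on $P$-almost every component on which it is nonzero, and one must show this set of components has measure zero whenever $\alpha\neq 0$. This is exactly the trivial-spectrum assertion contained in \cite{Fr04, Jen97}, and I would cite it.

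I expect the trivial spectrum to be the main obstacle, and it is worth spelling out why the obvious shortcut fails, since this is precisely what forces reliance on the deeper results. Using the characterization recalled in the Remark following the definition of strong stationarity, one obtains a $T$-generating algebra $\mathcal{F}$ and measure preserving maps $\tau_n$ with $f\circ\tau_n=f$ for $f\in\mathcal{F}$ and $T\tau_n=\tau_n T^n$, so that the associated Koopman operators satisfy $U_{\tau_n}U_T=U_T^{\,n}U_{\tau_n}$. One is tempted to compare $\int T^nf\cdot \bar f\, d\mu=e(\alpha)^n$ with its dilate $\int T^{rn}f\cdot \bar f\, d\mu=e(\alpha)^{rn}$ and conclude $e(\alpha)^n=e(\alpha)^{rn}$, hence $e(\alpha)=1$. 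However, implementing the dilation through $\tau_r$ sends the eigenfunction $f$ of $T$ to $U_{\tau_r}f$, which is an eigenfunction of $T^r$ (with eigenvalue $e(\alpha)$) rather than of $T$; consequently the two correlations are \emph{not} related by \eqref{E:sst}, the computation collapses to a tautology, and no contradiction is produced. Overcoming this genuinely requires the analysis of the Kronecker factor carried out in \cite{Jen97, Fr04}, which is why Theorem~\ref{T:Sst} is stated and used as a structural consequence of those works rather than reproved here.
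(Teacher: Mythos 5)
Your proposal matches the paper's treatment: Theorem~\ref{T:Sst} is not proved in the paper but is quoted as a structural consequence of the main results of \cite{Fr04} and \cite{Jen97}, exactly the two black boxes you invoke. Your additional remarks on why the ergodic components need not themselves be strongly stationary and why the naive eigenvalue computation collapses are sound but supplementary; the substance of the argument is the same citation the paper makes.
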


\subsection{Furstenberg systems of sequences}\label{SS:Defs}

In this subsection we reproduce  the notion of a Furstenberg system
from \cite{FH18} in a slightly more general context and record some basic related facts that will be used later.
\begin{definition}
	Let $(Y,d)$ be a compact metric space and  $ \bM:=([M_k])_{k\in\N}$ be a sequence of intervals with $M_k\to \infty$.
	We say that a finite collection of bounded sequences $a_1,\ldots, a_\ell\colon \Z\to Y$
	{\em admits joint  correlations  on $\bM$}, if the   limits
	\begin{equation}\label{E:CorDef}
	\lim_{k\to\infty}\E_{m\in [M_k]} \prod_{j=1}^s f_j(\tilde{a}_j(m+n_j))
	\end{equation}
	exist for all $s \in \N$, all   $n_1,\ldots, n_s\in \Z$
	(not necessarily distinct), all $f_1,\ldots, f_s\in C(Y)$,
	and all $\tilde{a}_1,\ldots,\tilde{a}_s\in \{a_1,\ldots,a_\ell\}$.
\end{definition}
\begin{remarks}
	$\bullet$	 Given $a_1,\ldots, a_\ell\colon \Z \to Y$, since $C(Y)$ is separable,  using a diagonal argument, we get that every sequence of intervals $\bM=([M_k])_{k\in \N}$
	has a subsequence $\bM'=([M_k'])_{k\in\N}$, such that the sequences  $a_1,\ldots, a_\ell$  admit joint correlations on $\bM'$.
	
	$\bullet$ Let $X:=(Y^\ell)^\Z$. Note that  the algebra generated by functions of the form $x\mapsto h(x_j(k))$, $x\in X$, for  $j\in \{1,\ldots, \ell\}$, $k\in  \Z$, and $h\in C(Y)$, separates points in $C(X)$, where
 $x:=(x(n))_{n\in\Z}=(x_1(n),\ldots, x_\ell(n))_{n\in \Z}$. We conclude that  if the
	sequences $a_1,\ldots, a_\ell\colon \Z\to Y$
	 admit joint  correlations  on $\bM$, then  for all $f\in C(X)$ the following limit exist
	$$
	\lim_{k\to\infty}\E_{m\in [M_k]}\,  f(T^ma),
	$$
	where $a:=(a_1,\ldots, a_\ell)\in X$ and $T$ is the shift transformation on $X$, which is defined by $(Tx)(n):=x(n+1)$, $n\in\Z$, $x\in X$.
	Hence, the  weak-star limit $\lim_{k\to\infty}\E_{m\in [M_k]}\, \delta_{T^ma}$
	exists.	
\end{remarks}
If a finite collection of sequences  admits joint correlations on a given sequence of intervals,  then we use a  variant of the correspondence principle of Furstenberg~\cite{Fu77, Fu} in order
to associate a measure preserving system that captures the statistical properties of these sequences.
\begin{definition}\label{D:correspondence}
	Let $(Y,d)$ be a compact metric space  $\ell\in\N$ and $a_1,\ldots, a_\ell\colon \Z\to Y$ be sequences that  admit joint
	correlations  on
	$\bM:=([M_k])_{k\in\N}$. We let  $\mathcal{A}:=\{a_1,\ldots,a_\ell\}$, $X:=(Y^\ell)^\Z$,  $T$ be the shift transformation on $X$,  defined by $(Tx)(n):=x(n+1)$, $n\in\Z$, $x\in X$,
	and $\mu$ be the weak-star limit $\lim_{k\to\infty}\E_{m\in [M_k]}\delta_{T^ma}$
	where $a:=(a_1,\ldots, a_\ell)$ is thought of as an element of $X$.
	\begin{itemize}
	\item We call $(X,\mu,T)$
	the {\em joint Furstenberg system
		associated with} $\mathcal{A}$ on  $\bM$, or  simply, the {\em F-system of $\mathcal{A}$ on $\bM$}.
	
	\item  We say that the finite collection
	$\mathcal{A}$ has a {\em unique } Furstenberg system,  if the weak-star limit $\lim_{M\to\infty}\E_{m\in [M]}\delta_{T^ma}$ exists, or equivalently,  if  $\mathcal{A}$ admits joint correlations on $([M])_{M\in\N}$.
	
	\end{itemize}
\end{definition}
\begin{remarks}
	$\bullet$ If we are given sequences $a_1,\ldots, a_\ell\colon \N\to Y$, we extend them to $\Z$ in an arbitrary way; then  the measure $\mu$ will  not depend on the extension.

$\bullet$ A sequence may  not admit correlations
 on $([M])_{M\in\N}$, so with our definition it may not have a unique Furstenberg system, but nevertheless all its Furstenberg systems could be measure theoretically isomorphic. This happens for example when $a(n):=\{\log\log{n}\}$, $n\in\N$; in this case all Furstenberg systems are isomorphic to the trivial one point system, but $(a(n))$ does not  admit correlations
 on $([M])_{M\in\N}$.

$\bullet$ It follows from \cite[Proposition~3.8]{DGS76} that if a collection of  sequences does not have a unique Furstenberg system on $([M])_{M\in\N}$, then it has uncountably many Furstenberg systems.

$\bullet$ A collection of sequences $a_1,\ldots, a_\ell\colon \Z\to \mathbb{U}$ may have several non-isomorphic  Furstenberg systems
depending on which sequence of intervals $\bM$ we use in the evaluation of  their joint correlations. We call any such system {\em a (joint) Furstenberg system of $a_1,\ldots, a_\ell$}.
\end{remarks}

	If $Y=\S^1$,  $\ell=1$,   $a_1=a$,
and  $F_0\in C(X)$ is defined by  $F_0(x):=x(0)$, $x \in X$,  then letting $z^1:=z$, $z^{-1}:=\overline{z}$ for $z\in \C$, we get that the following identities hold
(and in fact characterize the measure $\mu$)
\begin{equation}\label{E:correspondence}
\E_{m\in {\bM} }\,  \prod_{j=1}^s a^{\epsilon_j}(m+n_j) =\int  \prod_{j=1}^s
T^{n_j}F_0^{\epsilon_j} \, d\mu
\end{equation}
for all  $s\in \N$, $n_1, \ldots, n_s\in \Z$,  $\epsilon_1,\ldots, \epsilon_s\in\{-1,1\}$.
Moreover, if all the limits on the left hand side of \eqref{E:correspondence} exist, then the sequence $(a(n))$ admits  correlations on $\bM$.

In practice, in order to describe the structure of the Furstenberg system of a sequence $a\colon \N\to \U$ on $\bM$, we try to find  a
closed formula for the correlations on the left hand side of \eqref{E:correspondence} (see for example Lemma~\ref{L:33}) and then try to figure out a simple system and a function that has the same correlations (see for example Lemma~\ref{L:34}). If this is not feasible, then we try to obtain some partial information about these correlations that gives us useful feedback for the structure of the Furstenberg systems (see for example Theorem~\ref{T:SstHardyIterates}, which is based on Proposition~\ref{P:sstformula}).

Using the previous definition we can associate ergodic properties to arbitrary bounded sequences of complex numbers and also to strictly increasing sequences of integers  with range a set of positive density.
\begin{definition}	With $\U$ we denote the complex unit disc.  We say that:
	\begin{itemize}
		\item A sequence $a\colon \N\to  \U$ is {\em  ergodic} (or has {\em zero entropy}), if all its Furstenberg systems have the corresponding property.
		
		\item 	A  sequence $a\colon\N\to\N$ is  {\em  ergodic}  (or has {\em zero entropy}),  if it is strictly increasing,
		its range $E:=a(\N)$ is a set of positive density, and
		the $\{0,1\}$-valued sequence ${\bf 1}_E$ is  ergodic (respectively, has zero entropy).
	\end{itemize}
\end{definition}




The following lemma is a simple consequence of the definitions and will be used in order to establish strong stationarity for certain bounded sequences.
\begin{lemma}\label{L:sst}
	\begin{enumerate}
\item Suppose that the sequence $a\colon \N\to \U$ admits correlations on  $\bM$ and for every $s\in \N$, and $n_1,\ldots, n_s\in \Z$, the correlations
$$
\E_{m\in\bM}\prod_{j=1}^s a_j(m+rn_j)
$$
are independent of $r\in \N$, for all $a_1,\ldots, a_s\in \{a,\overline{a}\}$. Then the Furstenberg system of the sequence  $(a(n))$ on $\bM$ is strongly stationary.

\item Let $(Y,d)$ be a compact metric space and suppose that the sequence
$a\colon \N\to Y$ admits correlations on $\bM$ and for every  $s \in \N$ and    $n_1,\ldots, n_s\in \Z$, the correlations
$$
\E_{m\in \bM} \prod_{j=1}^s f_j(a(m+rn_j))
$$
are independent of $r\in \N$ for all  $f_1,\ldots, f_s\in C(Y)$. Then the Furstenberg system of the sequence  $(a(n))$ on $\bM$ is strongly stationary.
\end{enumerate}
\end{lemma}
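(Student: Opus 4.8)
The plan is to realize, in each case, an explicit $T$-generating algebra $\mathcal{F}$ on the Furstenberg system and then to check the defining identity \eqref{E:sst} on $\mathcal{F}$ by matching both of its sides with correlations of the sequence $a$. I would treat Part $(ii)$ first, since Part $(i)$ is essentially the special case $Y=\U$ together with a short multilinear reduction. Let $(X,\mu,T)$ be the Furstenberg system of $a$ on $\bM$, so that $X=Y^\Z$, $T$ is the shift, and $\mu=\lim_{k}\E_{m\in[M_k]}\delta_{T^m a}$ in the weak-star sense. Set $F_0(x):=x(0)$ and let $\mathcal{F}:=\{h\circ F_0\colon h\in C(Y)\}$, a conjugation-closed subalgebra of $L^\infty(\mu)$. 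First I would record that $\mathcal{F}$ is $T$-generating: since $T^n(h\circ F_0)(x)=h(x(n))$, the functions $T^n(h\circ F_0)$ generate, under products and linear combinations, an algebra that separates the points of $X$ and is closed under conjugation, so by Stone--Weierstrass it is dense in $C(X)$, hence in $L^2(\mu)$; this is exactly the observation recorded in the second remark following the definition of joint correlations.

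The heart of the matter is the identity, valid for all $\ell\in\N$, $n_1,\ldots,n_\ell\in\Z$ and $h_1,\ldots,h_\ell\in C(Y)$,
$$
\int \prod_{j=1}^\ell T^{n_j}(h_j\circ F_0)\,d\mu = \E_{m\in\bM}\prod_{j=1}^\ell h_j(a(m+n_j)),
$$
which follows directly from the definition of $\mu$ as a weak-star limit, using that the integrand is continuous on $X$ and that $T^{n_j}(h_j\circ F_0)(T^m a)=h_j(a(m+n_j))$. Replacing each $n_j$ by $rn_j$ gives the same identity with $rn_j$ in place of $n_j$ on both sides. By the hypothesis of Part $(ii)$ the right-hand side is independent of $r\in\N$, so comparing $r=1$ with general $r$ yields
$$
\int \prod_{j=1}^\ell T^{n_j}(h_j\circ F_0)\,d\mu = \int \prod_{j=1}^\ell T^{rn_j}(h_j\circ F_0)\,d\mu
$$
for every $r\in\N$. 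Since every element of $\mathcal{F}$ is of the form $h\circ F_0$, this is precisely \eqref{E:sst} for functions in $\mathcal{F}$, and therefore $(X,\mu,T)$ is strongly stationary.

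For Part $(i)$ I would take $Y=\U$ and let $\mathcal{F}$ be the conjugation-closed algebra generated by $F_0$ and $\overline{F_0}$, which is $T$-generating by the same Stone--Weierstrass argument. Every element of $\mathcal{F}$ is a polynomial in $F_0,\overline{F_0}$, so expanding a product $\prod_j T^{n_j}f_j$ by multilinearity reduces \eqref{E:sst} to the case in which each factor is one of $T^{n_j}F_0$ or $T^{n_j}\overline{F_0}$ (with the shifts $n_j$ allowed to repeat). For such a term the displayed identity reads $\int \prod_k T^{n_{j(k)}}F_0^{\epsilon_k}\,d\mu=\E_{m\in\bM}\prod_k a^{\epsilon_k}(m+n_{j(k)})$ with $\epsilon_k\in\{1,-1\}$ and $a^{-1}:=\overline{a}$, and the hypothesis of Part $(i)$ (applied with $a_k\in\{a,\overline{a}\}$ and the shifts $n_{j(k)}$) states exactly that this quantity is unchanged when all shifts are rescaled by $r$. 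Summing the expansion back up gives \eqref{E:sst}, which completes the proof.

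I do not expect a serious obstacle here, as the statement is close to a direct transcription of the correspondence principle into the language of Section~\ref{SS:sst}; the only points requiring care are the verification that $\mathcal{F}$ is $T$-generating in the precise sense of that section and the bookkeeping in the multilinear reduction for Part~$(i)$, both of which are routine.
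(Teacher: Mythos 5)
Your argument is correct and is exactly the routine verification the paper has in mind: Lemma~\ref{L:sst} is stated without proof as ``a simple consequence of the definitions,'' and your choice of $\mathcal{F}$ as the single-coordinate algebra, together with matching both sides of \eqref{E:sst} against the correlation hypothesis via the weak-star definition of $\mu$, is the intended route. The one point worth noting is that you verify density of the \emph{algebra} generated by $\{T^n f\colon f\in\mathcal{F},\ n\in\N\}$ rather than of its linear span, which is what the definition of $T$-generating literally asks for; this is nonetheless the correct reading (the paper's own example $\mathcal{F}=\{f(y)\}$ for $T(x,y)=(x,y+x)$ on $\T^2$ also fails the literal linear-span condition), so no change to your proof is needed.
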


\subsection{Furstenberg systems of images of sequences}
For a given sequence $a\colon \N\to Y$ and ``regular'' function $\phi\colon Y\to \C$ we would like  to relate Furstenberg systems of sequences of the form $(\phi(a(n)))$ to those of the sequence $(a(n))$.
\begin{definition} Let $\nu$ be a Borel measure on the compact metric space $(Y,d)$.	
	\begin{itemize}
\item We say that $\phi \colon Y\to \R$ is {\em Riemann-integrable with respect to $\nu$}, if  it is Borel measurable and for every $\varepsilon>0$
there exist $\phi^-,\phi^+\in C(Y)$ such that $\phi^-(y)\leq \phi(y)\leq \phi^+(y)$ for every $y\in Y$ and
$$
\phi^-(y)\leq \phi(y)\leq \phi^+(y),\,  y\in Y,\,  \text{ and }\,
\int (\phi^+-\phi^-)\, d\nu\leq \varepsilon.$$

\item We say that a complex valued function $\phi\colon Y\to \C$ is {\em Riemann-integrable with respect to $\nu$} if its real and imaginary parts are Riemann-integrable.

\end{itemize}
\end{definition}
\begin{remark}
	It can be shown that  $\phi\colon Y\to \C$ is Riemann-integrable if the set of discontinuity points of $\phi$ has $\nu$-measure $0$.
	\end{remark}
The next result gives information about the possible Furstenberg systems of  images of sequences under Riemann-integrable functions. Its proof
is based on  some   rather standard approximation arguments; for readers convenience we include some details.
\begin{proposition}\label{P:ImageRiemann}
	Let
	$(Y,d)$ be a compact metric space.  Suppose that the sequences    $a_1,\ldots, a_\ell\colon \Z\to Y$    admit joint 	correlations  on
	$\bM:=([M_k])_{k\in\N}$ and let $(X,\mu,T)$ be their joint  Furstenberg system on $\bM$.
	For  $a:=(a_1,\ldots, a_\ell)$ let
	\begin{equation}\label{E:nu}
	\nu:=\lim_{k\to\infty}\E_{m\in [M_k]}\, \delta_{a(m)},
	\end{equation}
	where the limit is taken in the weak-star sense, and  suppose that the function
	$\phi\colon Y^\ell\to \C$ is Riemann-integrable  with respect to the measure $\nu$.
	Then
	  the sequence
	  $$
	  b(n):=\phi(a_1(n), \ldots, a_\ell(n)), \qquad n\in \N,
	  $$
	   admits correlations on  $\bM:=([M_k])_{k\in\N}$, the corresponding Furstenberg system is a factor of the system  $(X,\mu,T)$, and
	 if $\phi$ is injective,  it  is isomorphic to the system  $(X,\mu,T)$.
\end{proposition}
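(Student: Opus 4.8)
The plan is to realize $(b(n))$ as the image, under a fixed (though only Borel) map, of the orbit of the generic point of $(X,\mu,T)$, and then transfer all structure through that map. Write $\pi_0\colon X\to Y^\ell$ for the time-zero evaluation $\pi_0(x):=x(0)$, and $\pi_n:=\pi_0\circ T^n$, so that $\pi_n(x)=x(n)$. Put $a:=(a_1,\dots,a_\ell)$ and $\mu_k:=\E_{m\in[M_k]}\delta_{T^m a}$, so $\mu_k\to\mu$ weak-star. Since $\pi_0$ is continuous and $\pi_0(T^m a)=a(m)$, pushing forward the defining averages gives $\nu=(\pi_0)_*\mu$, and $b(n)=\phi(a(n))=\phi(\pi_n(a))$. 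Let $Z\subseteq\C$ be a compact set containing the bounded range of $\phi$. For any $f_1,\dots,f_s\in C(Z)$ one computes
$$
\E_{m\in[M_k]}\prod_{j=1}^s f_j\big(b(m+n_j)\big)=\int F\,d\mu_k, \qquad F(x):=\prod_{j=1}^s (f_j\circ\phi)(x(n_j)),
$$
so everything reduces to justifying the passage $\int F\,d\mu_k\to\int F\,d\mu$ for such $F$; this is the only genuinely analytic point, as $F$ is typically discontinuous.

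The heart of the matter is therefore weak-star convergence tested against discontinuous functions. First I would record that Riemann-integrability of $\phi$ with respect to $\nu$ forces $\nu(\operatorname{disc}\phi)=0$: choosing continuous $\phi^-_r\le\phi\le\phi^+_r$ with $\int(\phi^+_r-\phi^-_r)\,d\nu\le 1/r$ and replacing them by $\max_{i\le r}\phi^-_i$ and $\min_{i\le r}\phi^+_i$, the limits $g^-:=\lim_r\phi^-_r$ and $g^+:=\lim_r\phi^+_r$ are respectively lower- and upper-semicontinuous with $g^-\le\phi\le g^+$ and $\int(g^+-g^-)\,d\nu=0$; at every point of the $\nu$-full set $\{g^-=g^+\}$ the semicontinuity bounds squeeze $\phi$ into continuity. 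Now each factor of $F$ is continuous at $x$ whenever $\phi$ is continuous at $x(n_j)$, so $\operatorname{disc}F\subseteq\bigcup_j\pi_{n_j}^{-1}(\operatorname{disc}\phi)$; since $\pi_{n_j}=\pi_0\circ T^{n_j}$, $T$ preserves $\mu$, and $(\pi_0)_*\mu=\nu$, this set is $\mu$-null. Thus $F$ is bounded and $\mu$-a.e.\ continuous, and the portmanteau theorem yields $\int F\,d\mu_k\to\int F\,d\mu$. In particular all correlations of $b$ on $\bM$ exist, so $(b(n))$ admits correlations on $\bM$; let $(X_b,\mu_b,T_b)$ be the resulting Furstenberg system.

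Next I would exhibit the factor map. Define the Borel map $\pi\colon X\to Z^\Z$ by $\pi(x):=(\phi(x(n)))_{n\in\Z}$. It satisfies $\pi\circ T=T_b\circ\pi$ and $\pi(T^m a)=(b(m+n))_n=T_b^m b$, whence $\pi_*\mu_k=\E_{m\in[M_k]}\delta_{T_b^m b}=:\mu_{b,k}$. As $\pi$ is $\mu$-a.e.\ continuous (its discontinuity set lies in $\bigcup_n\pi_n^{-1}(\operatorname{disc}\phi)$, a $\mu$-null countable union), for every $G\in C(Z^\Z)$ the function $G\circ\pi$ is bounded and $\mu$-a.e.\ continuous, so by portmanteau $\int G\,d\mu_b=\lim_k\int G\,d\mu_{b,k}=\lim_k\int G\circ\pi\,d\mu_k=\int G\circ\pi\,d\mu$. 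Hence $\mu_b=\pi_*\mu$, and $\pi$ is a measurable, measure-preserving map intertwining $T$ and $T_b$; that is, $(X_b,\mu_b,T_b)$ is a factor of $(X,\mu,T)$.

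Finally, suppose $\phi$ is injective. By the Lusin–Souslin theorem the image $\phi(Y^\ell)$ is Borel and $\phi^{-1}$ is Borel on it; since $\pi(X)\subseteq(\phi(Y^\ell))^\Z$ we have $\mu_b\big((\phi(Y^\ell))^\Z\big)=1$, so $\psi(z):=(\phi^{-1}(z(n)))_{n\in\Z}$ is defined $\mu_b$-a.e., is Borel, intertwines $T_b$ and $T$, and satisfies $\psi\circ\pi=\mathrm{id}$ on $X$ and $\pi\circ\psi=\mathrm{id}$ $\mu_b$-a.e.; then $\psi_*\mu_b=\psi_*\pi_*\mu=\mu$, so $\pi$ is an isomorphism. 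The main obstacle throughout is exactly the discontinuity of $\phi$ (hence of $\pi$ and of the test functions $F$): weak-star convergence of $\mu_k$ does not apply to them directly, and the whole argument hinges on extracting $\nu$-a.e.\ continuity from the sandwiching definition of Riemann-integrability so that the portmanteau theorem becomes available. The injective case additionally relies on the descriptive set theoretic fact that an injective Borel map between Polish spaces has Borel image and Borel inverse.
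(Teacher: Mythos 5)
Your proof is correct, and while its overall skeleton matches the paper's (realize $b$ as the coordinate-wise image of $a$ under $\phi$, prove convergence of the correlations, build the intertwining map, identify the pushforward measure, and invoke injectivity at the end), the analytic engine you use at the crucial step is genuinely different. The paper handles the discontinuity of $\phi$ by direct sandwiching: it reduces to Lipschitz test functions, approximates $\phi$ in $L^1(\nu)$ by continuous functions from above and below, and runs an $\varepsilon$-argument for each correlation (and again for the identity \eqref{E:needed}). You instead extract from the sandwich definition the qualitative fact that $\nu(\operatorname{disc}\phi)=0$ --- your monotone rearrangement $g^-\leq\phi\leq g^+$ with $g^-$ lower and $g^+$ upper semicontinuous and $\int(g^+-g^-)\,d\nu=0$ is a clean proof of this Lebesgue-criterion-type statement --- and then feed everything into the portmanteau/mapping theorem for bounded, $\mu$-a.e.\ continuous test functions, using $(\pi_{n})_*\mu=\nu$ (via $T$-invariance of $\mu$) to see that the relevant discontinuity sets are $\mu$-null. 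This buys you a single uniform mechanism that disposes of both the existence of correlations and the identity $\mu_b=\pi_*\mu$ in one stroke, at the cost of importing the portmanteau theorem for a.e.\ continuous integrands; the paper's route is more elementary and self-contained but repeats the approximation argument twice. Your treatment of the injective case via Lusin--Souslin is also more explicit than the paper's, which simply notes that $\Phi$ is injective and implicitly uses that an injective Borel measure-preserving factor map of Lebesgue spaces is an isomorphism; both are fine. All the individual steps you rely on (the semicontinuity squeeze, $\operatorname{disc}F\subseteq\bigcup_j\pi_{n_j}^{-1}(\operatorname{disc}\phi)$, $T$-invariance of $\mu$, and $(\pi_0)_*\mu=\nu$) check out.
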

\begin{proof}	
	We first remark that the existence of the weak-star limit in \eqref{E:nu} follows from our assumption that the sequences $a_1,\ldots, a_\ell$ admit joint correlations on $\bM$.
	
	Let $(X', \mu',T')$ be the Furstenberg system of $b$ (recall that $X'=\U^\Z$).
	We first show that $b$ admits correlations on $\bM$, or equivalently,
	that 	\begin{equation}\label{E:exists'}
	\lim_{k\to\infty}\E_{m\in [M_k]}
	\prod_{j=1}^s f_j(\phi(a_1(n+n_j), \ldots, a_\ell(n+n_j)))
	\end{equation}
	exists for all $s \in \N$, all   $n_1,\ldots, n_s\in \Z$
	(not necessarily distinct), and all $f_1,\ldots, f_s\in C(\U)$.
	By density with respect to the uniform norm we can
	assume that the functions $f_1,\ldots, f_s$ are Lip-continuous on $\U$. Using this and by  approximating $\phi$ in $L^1(\nu)$ we get that in order to show that the averages in \eqref{E:exists'} form a Cauchy sequence for every $\phi$ that is Riemann-integrable with respect to the measure $\nu$, it suffices to show that they form a Cauchy sequence for every $\phi\in C(Y^\ell)$. But if
	$\phi\in C(Y^\ell)$, then the averages  \eqref{E:exists'} converge
	 since by  our assumption  the sequences $a_1,\ldots, a_\ell$ admit joint correlations on $\bM$.

Next, 	we define the  map $\Phi\colon X\to X'$  by
	$$
	\Phi((x_1(n),\ldots, x_\ell(n))_{n\in\Z}):=(\phi(x_1(n),\ldots, x_\ell(n)))_{n\in\Z}.
	$$
	 Since $\phi$ is Borel measurable, the map $\Phi$ is a measurable map and we clearly have that $T'\circ \Phi=\Phi\circ T$.
	Note also that if $\phi$ is one to one, then so is $\Phi$.
	It remains to  show that  $\mu'=\mu\circ \Phi^{-1}$. To this end, let  $f\in C(X')$.
	
	If $
	\phi$ is continuous, then $f\circ\Phi\in C(X)$, hence
	\begin{multline*}
	\int f \, d(\mu\circ\Phi^{-1})= \int f\circ \Phi\, d\mu=\E_{m\in \bM} \, (f\circ \Phi)(T^ma)=\\
	=\E_{m\in \bM}\,  f(T'^m(\Phi\circ a))=\E_{m\in \bM} \, f(T'^mb)=\int f\, d\mu'.
	\end{multline*}
	Hence,  $\mu'=\mu\circ \Phi^{-1}$.
	
	To get a similar identity when $\phi$ is Riemann-integrable   with respect to $\nu$, the only part that needs justification is  that the identity
	\begin{equation}\label{E:Phi}
	\int f\circ \Phi\, d\mu=\E_{m\in \bM} \, (f\circ \Phi)(T^ma)
	\end{equation}
	holds for every $f\in C(X')$.
	Using uniform approximation and linearity we can assume that $f$ is a cylinder function, meaning, of the form $f(x')=\prod_{j=1}^s F_{n_j}(x')$,  for some $s\in \N$ and $n_1,\ldots,n_s\in \Z$,  where for $i\in \Z$ we let $F_i(x'):=x'(i)$, $x'\in X'$. Writing elements $x\in X$ as $x=(x_1,\ldots, x_\ell)$, where $x_1,\ldots, x_\ell\in X',$ we have
	$$
	f(\Phi(x))=\prod_{j=1}^s F_{n_j}(\Phi(x))=\prod_{j=1}^s \phi(x_1(n_j),\ldots, x_\ell(n_j)).
	$$
	Hence,  in order to verify that  \eqref{E:Phi} holds
	 it suffices  to show that
	\begin{equation}\label{E:needed}
	\int \prod_{j=1}^s \phi_j(x_1(n_j),\ldots, x_\ell(n_j))\, d\mu(x)=\E_{m\in \bM} \prod_{j=1}^s \phi_j(a_1(m+n_j),\ldots, a_\ell(m+n_j))
	\end{equation}
	whenever $\phi_1, \ldots, \phi_s\colon Y^\ell\to \C$
	are Riemann-integrable with respect to $\nu$ (it is convenient to
	prove this more general version with $s$ different functions).
	Furthermore, for $j=1,\ldots, s$, writing  $\phi_j$ as a linear combination (over $\C$) of four non-negative real valued  functions that are  Riemann-integrable with respect to $\nu$,  and  using linearity, we see that it suffices to  verify the previous identity when the functions $\phi_1,\ldots, \phi_s$ are real valued and take values in $[0,1]$.
	Since \eqref{E:needed} holds for continuous functions  $\phi_1,\ldots, \phi_s$,
	using   a standard approximation argument
	from above and below by continuous functions and \eqref{E:nu}, we get that \eqref{E:needed} holds for Riemann-integrable functions with respect to $\nu$ as well.  This completes the proof. 	
\end{proof}

We will use the previous result in the proof of Theorem~\ref{T:HardySequence'} in order to show that under suitable assumptions on the sequence $a\colon \N\to \R$, all
Furstenberg systems of the sequence $(e([a(n)]\alpha))$
are factors of joint Furstenberg systems of the sequences $(a(n))$ and $(a(n)\alpha)$ (thought of as sequences on $\T$). These three  sequences are linked via the identity $e([a(n)]\alpha)=\phi(a(n)\alpha, a(n))$, $n\in\N$,  where $\phi\colon \T^2\to \S^1$ is defined by
$\phi(x,y)=e(x -\{y\}\alpha)$, which is Riemann-integrable with respect to $m_{\T^2}$.

One  way to establish uniqueness and determine the structure of the Furstenberg system of a sequence, is to represent it as the image under a ``regular'' function of an orbit of a point in a uniquely ergodic system. This is the context of the next result (which is well known to experts).
\begin{corollary}
	Let  $(Y,\nu,S)$  be a uniquely ergodic system  and $g\colon Y\to \C$ be Riemann-integrable with respect to $\nu$.
		Then for every $y\in Y$ the sequence $(g(S^ny))$ has a unique Furstenberg system that  is a factor of the system
		$(Y,\nu,S)$. Furthermore, if  $g$ is injective, then  we have an isomorphism.
\end{corollary}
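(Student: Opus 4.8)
The plan is to deduce this entirely from Proposition~\ref{P:ImageRiemann}, once I have understood the ``base'' sequence $a(n):=S^ny$, $n\in\Z$, as a sequence taking values in the compact metric space $Y$. So I would first treat the case $g=\id$ and then feed the result into Proposition~\ref{P:ImageRiemann} with $\ell=1$, $a_1=a$, and $\phi=g$.

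First I would show that $a(n)=S^ny$ has a unique Furstenberg system and that it is (isomorphic to) $(Y,\nu,S)$. To compute correlations, fix $s\in\N$, $n_1,\dots,n_s\in\Z$, and $f_1,\dots,f_s\in C(Y)$; since $S$ is a homeomorphism of $Y$, the function $F:=\prod_{j=1}^s(f_j\circ S^{n_j})$ belongs to $C(Y)$, and
$$
\E_{m\in[N]}\prod_{j=1}^s f_j(S^{m+n_j}y)=\E_{m\in[N]}F(S^my)\longrightarrow \int F\,d\nu
$$
by unique ergodicity, the key point being that every weak-star limit of the empirical measures $\E_{m\in[N]}\delta_{S^my}$ is $S$-invariant and hence equals $\nu$. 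Thus $a$ admits correlations on $([N])_{N\in\N}$, so it has a unique Furstenberg system $(X,\mu,T)$ with $X=Y^\Z$. To identify it, I would introduce the orbit map $\pi\colon Y\to Y^\Z$, $\pi(y'):=(S^ny')_{n\in\Z}$, which is continuous, injective (the coordinate-$0$ evaluation inverts it), and satisfies $T\circ\pi=\pi\circ S$. Since $T^m\pi(y)=\pi(S^my)$, the defining measure of the Furstenberg system is $\mu=\lim_N \pi_*\big(\E_{m\in[N]}\delta_{S^my}\big)=\pi_*\nu$, again by unique ergodicity. Hence $\pi$ is a measure-theoretic isomorphism from $(Y,\nu,S)$ onto $(X,\mu,T)$.

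Finally I would apply Proposition~\ref{P:ImageRiemann}. The measure $\nu$ appearing there, namely $\lim_N\E_{m\in[N]}\delta_{a(m)}=\lim_N\E_{m\in[N]}\delta_{S^my}$, equals the invariant measure $\nu$ by unique ergodicity, so the hypothesis that $g$ be Riemann-integrable with respect to this measure is exactly the hypothesis of the corollary. The proposition then yields that $b(n):=g(a(n))=g(S^ny)$ admits correlations on $([N])_{N\in\N}$, i.e.\ has a unique Furstenberg system, that this system is a factor of $(X,\mu,T)$, and that it is isomorphic to $(X,\mu,T)$ when $g$ is injective. Combining with the isomorphism $(X,\mu,T)\cong(Y,\nu,S)$ from the previous step gives the claim.

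The only genuinely substantive point — everything else is bookkeeping handed to us by Proposition~\ref{P:ImageRiemann} — is the identification of the base Furstenberg system with $(Y,\nu,S)$ itself rather than merely a factor of it. This rests on two uses of unique ergodicity (convergence of the empirical measures to $\nu$, both to get existence of correlations and to compute $\mu=\pi_*\nu$) together with the injectivity of the orbit embedding $\pi$, which is what upgrades a factor map to an isomorphism.
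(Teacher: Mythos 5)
Your proposal is correct and follows essentially the same route as the paper: identify the Furstenberg system of the orbit sequence $(S^ny)$ with $(Y,\nu,S)$ via unique ergodicity, note that the weak-star limit $\lim_N\E_{m\in[N]}\delta_{S^my}$ equals $\nu$, and then apply Proposition~\ref{P:ImageRiemann} with $\phi=g$. The paper states these steps more tersely, but the content is identical.
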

\begin{proof}
 	Let $y\in Y$.
By unique ergodicity we get that the sequence
$(S^ny)$ has a unique Furstenberg system that is isomorphic to the system $(Y,\nu,S)$. Moreover, the
 weak-star limit defined in equation  \eqref{E:nu} of  Proposition~\ref{P:ImageRiemann} is equal to $\nu$.
 The result now follows from Proposition~\ref{P:ImageRiemann}.
\end{proof}
It is easy to deduce from the previous result that the sequence $(\sin{n})$ has a unique Furstenberg system that is an ergodic rotation on the circle. Moreover,
for $c_1,c_2, \alpha,\beta\in \R$, the sequences $(c_1\cos(n\alpha)+c_2\sin(n\beta))$ and  $(c_1{\bf 1}_{[0,1/2]}(\{n\alpha\})+c_2 {\bf 1}_{[1/2,1/3]}(\{n\beta\}))$ have  unique Furstenberg systems and they are both   factors of  rotations on the 2-dimensional torus.

We will also use  the following result (again, well known to experts):
\begin{proposition}\label{P:ergodic}
Let $(Y,\nu,S)$ be a system and suppose that   $\nu=\int \nu_y\, d\nu(y)$ is the ergodic decomposition of $\nu$. Then for every $g\in L^\infty(\nu)$
and for almost every $y\in Y$, the sequence $(g(S^ny))$ has a unique  Furstenberg system that is a factor of the system $(Y,\nu_y,S)$ (and as a consequence it is ergodic).
\end{proposition}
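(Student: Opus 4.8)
The plan is to reduce the almost-everywhere pointwise statement for fixed $g$ to the ergodic-decomposition structure of $(Y,\nu,S)$, and then, to handle the ``for every $g\in L^\infty(\nu)$'' quantifier, pass to a countable dense family. First I would invoke the pointwise ergodic theorem in its form adapted to the ergodic decomposition $\nu=\int \nu_y\, d\nu(y)$: for a fixed $g\in L^\infty(\nu)$ and fixed integers $n_1,\ldots,n_s\in\Z$ together with fixed $\epsilon_1,\ldots,\epsilon_s\in\{-1,1\}$, the Birkhoff averages
$$
\E_{m\in[M]}\, \prod_{j=1}^s (g^{\epsilon_j}\circ S^{n_j})(S^m y)
$$
converge for $\nu$-almost every $y$ to $\E\big(\prod_{j=1}^s g^{\epsilon_j}\circ S^{n_j}\,\big|\,\CI_S\big)(y)$, where $\CI_S$ is the $\sigma$-algebra of $S$-invariant sets; and since for almost every $y$ this conditional expectation equals $\int \prod_{j=1}^s g^{\epsilon_j}\circ S^{n_j}\, d\nu_y$, the limit is exactly the correlation computed in the ergodic component $(Y,\nu_y,S)$. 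This is precisely the left-hand side of the correspondence identity \eqref{E:correspondence} for the sequence $a(n):=g(S^ny)$, evaluated on $\bM=([M])_{M\in\N}$.

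The key bookkeeping step is the quantifier interchange. For a single triple $(n_1,\ldots,n_s;\epsilon_1,\ldots,\epsilon_s)$ the above gives a $\nu$-null exceptional set; since there are only countably many such finite data, I would take the countable union of these null sets to obtain a single $\nu$-full set $Y_g$ of points $y$ for which \emph{all} the correlations in \eqref{E:correspondence} converge simultaneously. By the final assertion following \eqref{E:correspondence} in the excerpt, convergence of all these averages is exactly what guarantees that $a(n)=g(S^ny)$ admits correlations on $([M])_{M\in\N}$, i.e.\ has a unique Furstenberg system. Thus for every $y\in Y_g$ the sequence $(g(S^ny))$ has a unique Furstenberg system, and its defining correlations coincide with those of the orbit of $y$ in the ergodic component $(Y,\nu_y,S)$ paired with the function $g$.

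To identify the resulting system as a \emph{factor} of $(Y,\nu_y,S)$, I would run the argument of Proposition~\ref{P:ImageRiemann} (or its Corollary) inside the ergodic component: the correlations of $(g(S^ny))$ match, via \eqref{E:correspondence}, the integrals $\int \prod_j T^{n_j}F_0^{\epsilon_j}\, d\mu_y$ where $\mu_y$ is the push-forward under the orbit map $y'\mapsto (g(S^n y'))_{n\in\Z}$ of the measure $\nu_y$; this orbit map intertwines $S$ with the shift $T$ and sends $\nu_y$ to the Furstenberg measure, exhibiting the Furstenberg system as a factor of $(Y,\nu_y,S)$, which is ergodic by construction of the ergodic decomposition. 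Finally I would address the full quantifier over $g$ by choosing a countable set $\mathcal{G}\subset L^\infty(\nu)$ that is dense in $L^2(\nu)$ (and closed under the finitely many algebraic operations used), taking $Y_0:=\bigcap_{g\in\mathcal{G}} Y_g$, which is still $\nu$-full, and then upgrading from $\mathcal{G}$ to arbitrary $g\in L^\infty(\nu)$ by an $L^2(\nu_y)$-approximation argument in each ergodic component.

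The main obstacle, and the only genuinely delicate point, is this last uniformity over all of $L^\infty(\nu)$ simultaneously for a \emph{single} full-measure set of $y$: the pointwise ergodic theorem a priori only fixes a null set per function and per correlation datum, so one must be careful that the $L^2$-approximation controlling the correlations is valid $\nu_y$-almost everywhere uniformly. I expect this to go through because the correlations are multilinear in the functions and continuous with respect to $L^2(\nu_y)$ convergence of each factor (the factors $g^{\epsilon_j}\circ S^{n_j}$ being uniformly bounded), so density of $\mathcal G$ in $L^2(\nu_y)$—which holds for $\nu$-almost every $y$ since $\mathcal G$ is dense in $L^2(\nu)$ and $\nu=\int\nu_y\, d\nu(y)$—transfers convergence from $\mathcal G$ to all of $L^\infty(\nu)$ on the common full-measure set $Y_0$. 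As the proposition is flagged as ``well known to experts,'' I would keep this verification brief, citing the standard disintegration and pointwise ergodic theorem and the already-established Proposition~\ref{P:ImageRiemann} rather than reproving them.
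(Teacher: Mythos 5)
Your first three steps are exactly the paper's proof, just unpacked: the paper fixes $g$, invokes the pointwise ergodic theorem to get a unique Furstenberg system for a.e.\ $y$, and then observes that the orbit map $\Phi(y')=(g(S^ny'))_{n\in\Z}$ intertwines $S$ with the shift and pushes $\nu_y$ forward to the Furstenberg measure, so the system is a factor of the ergodic component $(Y,\nu_y,S)$. Up to that point your argument is correct and is the intended one.

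Your final paragraph, however, addresses a stronger statement than the one being proved. The proposition reads ``for every $g\in L^\infty(\nu)$ and for almost every $y\in Y$,'' i.e.\ the exceptional null set is allowed to depend on $g$; the paper's proof accordingly begins ``Let $g\in L^\infty(\nu)$ be bounded by $1$'' and never returns to quantify over $g$. So the countable dense family $\mathcal G$ and the $L^2(\nu_y)$-approximation are not needed here. Moreover, the uniformity you are trying to establish does not quite close as sketched: to transfer convergence of the correlations from $g_k\in\mathcal G$ to a general $g$ at a \emph{fixed} point $y\in Y_0$, you need $\limsup_M \E_{m\in[M]}|g-g_k|(S^my)$ to be small, and controlling that quantity again requires the pointwise ergodic theorem applied to $|g-g_k|$, whose exceptional set depends on $g$ (indeed $g(S^ny)$ is not even well defined for a fixed $y$ and an $L^\infty$-class $g$ without choosing a representative). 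That difficulty is genuine in Theorem~\ref{T:HardyIterates}, where the quantifiers are in the other order, but it is simply not part of Proposition~\ref{P:ergodic}; delete the last paragraph and your proof coincides with the paper's.
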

\begin{proof}
	Let $g\in L^\infty(\nu)$ be bounded by $1$. 	By the pointwise ergodic theorem for almost every $y\in Y$  the sequence
	$(g(S^ny))$ has a unique Furstenberg system, call it $(X,\mu_y,T)$, where $X=\U^\Z$. Let $\Phi\colon Y\to X$ be
defined by 	
$$
\Phi(y)=(g(S^ny))_{n\in\Z}, \qquad y\in Y.
$$
Then $T\circ \Phi=\Phi\circ S$ and the pointwise ergodic theorem easily implies that for almost every $y\in Y$ we have $\nu_y=\mu_y\circ \Phi^{-1}$. This completes the proof.
	\end{proof}

\section{Background on Hardy fields}\label{S:Hardy}
Let $B$ be the collection of equivalence classes of real valued
functions  defined on some half line $[c,+\infty)$, where we
identify two functions if they agree eventually.\footnote{The
	equivalence classes just defined are often called \emph{germs of
		functions}. We choose to use the word function when we refer to
	elements of $B$ instead, with the understanding that all the
	operations defined and statements made for elements of $B$ are
	considered only for sufficiently large values of $t\in \R$.}
A
\emph{Hardy field} $\mathcal{H}$ is a subfield of the ring $(B,+,\cdot)$ that is
closed under differentiation (the term Hardy field was first used by the
Bourbaki group in \cite{Bour61}). A \emph{Hardy field function} is a
function that belongs to some Hardy field.

\emph{We are going to assume
	throughout that all Hardy
	fields   mentioned are translation invariant},   meaning,
if $a(t)\in \mathcal{H}$,
then  $a(t+h)\in \mathcal{H}$ for every $h\in \R$.


A particular example of such a Hardy field is the set $\mathcal{LE}$ that was
introduced by Hardy in \cite{Ha10} and consists of all
\emph{logarithmic-exponential functions}, meaning all functions
defined on some half line $[c,+\infty)$ by a finite combination of
the symbols $+,-,\times, :, \log, \exp$, operating on the real
variable $t$ and on real constants.
For example,   the functions  $ t^a(\log{t})^b$ where $a,b\in \R$
are all elements of $\mathcal{LE}$.



Every Hardy field function is eventually monotonic and hence has a limit at infinity (possibly infinite). If one of the
functions $a,b\colon [c,+\infty)\to \R$ belongs to a Hardy field and
the other function  belongs to the same Hardy field or to $\mathcal{LE}$,
then the  limit $\lim_{t\to+\infty}a(t)/b(t)$ exists (possibly
infinite). This property is key and  will often justify our use of
l'Hospital's rule. \emph{We are going to freely use all these
	properties without any further explanation in the sequel.} The
reader can find more information about Hardy fields  in \cite{Bo81,
	Bos94} and the references therein.

Recall that $a\colon \R_+\to \R$ has at most  polynomial growth if $a(t)\prec t^d$ for some $d\in \N$.
The most important property of Hardy field functions of at most polynomial growth that will be used throughout this article, is
that we can relate their growth rates with the growth rates of their derivatives.  The next lemma illustrates this principle and will be used frequently:
\begin{lemma}\label{L:derivative}
	Let $a\colon \R_+\to \R$ be a Hardy field function with at most polynomial growth.
	\begin{enumerate}
		\item If  $t^\varepsilon\prec a(t)$ for some $\varepsilon>0$, then
		for every $r\in \N$ we have
	$$
	a'(t)\sim a(t)/t \quad \text{and} \quad a(t+r)-a(t)\sim a(t)/t.
	$$
	
	\item
	If $a(t)\prec t$, then	for every $r\in \N$ we have
	 $$
	\lim_{t\to+\infty} a'(t)=0 \quad \text{and} \quad
	\lim_{t\to+\infty}(a(t+r)-a(t)) =0.
	$$
	\end{enumerate}
\end{lemma}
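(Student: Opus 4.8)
The plan is to exploit the central feature of Hardy fields, namely that any quotient of Hardy field functions (or of a Hardy field function with a logarithmic-exponential function) has a limit at infinity, so that l'Hospital's rule applies freely. Both parts reduce to comparing $a(t)$ and $a'(t)$, together with a mean-value-theorem estimate to pass from $a'$ to the finite difference $a(t+r)-a(t)$.

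For part~(i), assume $t^\varepsilon\prec a(t)$ for some $\varepsilon>0$ while $a(t)\prec t^d$ for some $d\in\N$. First I would establish $a'(t)\sim a(t)/t$, equivalently that the quotient $t\,a'(t)/a(t)$ tends to a nonzero limit. Since $a$ lies in a Hardy field, $a'$ and $a$ do too, so this quotient has a limit $L\in[0,+\infty]$ (including infinite) by the Hardy field property. The bounds $t^\varepsilon \prec a(t) \prec t^d$ should force $L$ to be finite and nonzero: intuitively, $(\log a)' = a'/a$ and the logarithmic derivative of $a$ is squeezed between those of $t^\varepsilon$ and $t^d$, whose logarithmic derivatives are $\varepsilon/t$ and $d/t$. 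The cleanest route is to apply l'Hospital to $\log a(t)/\log t$; its derivative quotient is exactly $t\,a'(t)/a(t)$, and the ratio $\log a(t)/\log t$ is trapped in $[\varepsilon,d]$ eventually, which (via the Hardy field limit of the derivative quotient) pins $L$ into $[\varepsilon,d]$, in particular $0<L<\infty$. This gives $a'(t)\sim a(t)/t$. For the finite difference, by the mean value theorem $a(t+r)-a(t)=r\,a'(\xi_t)$ for some $\xi_t\in(t,t+r)$; since $a'$ is eventually monotone (being a Hardy field function) and $a'(t)\sim a(t)/t$ with $a$ regularly varying, one checks $a'(\xi_t)\sim a'(t)$, whence $a(t+r)-a(t)\sim a(t)/t$.

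For part~(ii), assume $a(t)\prec t$. Here I would compare $a(t)$ with $t$ directly: the quotient $a(t)/t$ tends to $0$ by hypothesis, and since $a$ is a Hardy field function, l'Hospital applied to $a(t)/t$ gives that $\lim_{t\to+\infty}a'(t)$ exists and equals $\lim_{t\to+\infty}a(t)/t=0$. (The Hardy field property guarantees the derivative-quotient limit exists, so l'Hospital is legitimate.) Then $\lim_{t\to+\infty}\big(a(t+r)-a(t)\big)=0$ follows again from the mean value theorem, since $a(t+r)-a(t)=r\,a'(\xi_t)$ with $\xi_t\to+\infty$ and $a'(\xi_t)\to 0$.

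The main obstacle, and the step requiring the most care, is justifying $a'(\xi_t)\sim a'(t)$ in part~(i): the mean value theorem produces an unspecified intermediate point, and I must argue that shifting the argument by a bounded amount does not change the asymptotic order of $a'$. This is where I would lean on the regular-variation behaviour forced by $a'(t)\sim a(t)/t$ together with the eventual monotonicity of $a'$: bounding $a'(\xi_t)$ between $a'(t)$ and $a'(t+r)$ and showing $a'(t+r)\sim a'(t)$ (which itself follows by applying the same $\sim a(\cdot)/\cdot$ asymptotic to $t+r$ and using $a(t+r)\sim a(t)$, $t+r\sim t$). Everything else is a routine application of the Hardy field limit property and l'Hospital's rule.
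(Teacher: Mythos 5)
Your proposal is correct and follows essentially the same route as the paper: l'Hospital applied to $\log|a(t)|/\log t$ to pin $\lim t\,a'(t)/a(t)$ in $[\varepsilon,d]$, followed by the mean value theorem for the finite differences. The only cosmetic difference is in part~(ii), where the paper argues by contradiction via the mean value theorem while you invoke l'Hospital directly; both rest on the same Hardy-field fact that $\lim_{t\to+\infty}a'(t)$ exists a priori, and your treatment of the intermediate point $\xi_t$ in part~(i) is, if anything, more explicit than the paper's.
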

\begin{proof} We prove $(i)$. Applying l'Hospital's rule (note that all limits below are well defined because $a(t)$ is a Hardy field function) we get
	\begin{equation}\label{E:log}
	\lim_{t\to+\infty}\frac{ta'(t)}{a(t)}=\lim_{t\to+\infty}\frac{(\log{|a(t)|})'}{(\log{t})'}=
	\lim_{t\to+\infty}\frac{\log{|a(t)|}}{\log{t}}.
	\end{equation}
	Since $a(t)$ has at most  polynomial growth and $t^\varepsilon\prec a(t)$,   the last  limit is a positive real number. This proves that 	$a'(t)\sim a(t)/t$. Using the mean value theorem we deduce that  $a(t+r)-a(t)\sim a(t)/t$ for every $r\in \N$.

We prove $(ii)$. Arguing by contradiction suppose that the limit   $\lim_{t\to+\infty} a'(t)$ is  non-zero (the limit exists since $a(t)$ is a Hardy field function). Then an easy application of the mean value theorem gives that the limit $\lim_{t\to+\infty} a(t)/t$ cannot be zero, contradicting our assumption. 
	Finally, using the mean value theorem we deduce  that $\lim_{t\to+\infty}(a(t+r)-a(t)) =0$ for every $r\in\N$.
\end{proof}

We will also use the following equidistribution result:
	\begin{theorem}[Boshernitzan~\cite{Bos94}]\label{T:Boshernitzan}
	Let $a\colon \R_+\to \R$ be a Hardy field function with at most polynomial growth. Then the sequence
	$(a(n))$ is equidistributed on $\T$  	if and only if
	$$
	\lim_{t\to+\infty} \frac{|a(t)-p(t)|}{\log{t}}=+\infty
	$$
	 for every polynomial $p\in\Q[t]$.
\end{theorem}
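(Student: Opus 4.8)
The plan is to reduce to Weyl's criterion and then treat the two implications by different exponential sum methods, in both cases exploiting the sharp asymptotic relations between a Hardy field function and its derivatives recorded in Lemma~\ref{L:derivative}. By Weyl's criterion, $(a(n))$ is equidistributed $\bmod 1$ if and only if $\frac1N\sum_{n=1}^N e(h\,a(n))\to 0$ for every nonzero $h\in\Z$. Since $p\in\Q[t]$ if and only if $p/h\in\Q[t]$, the growth hypothesis on $a$ is equivalent to the same hypothesis on $h\,a$; this lets me normalize $h=1$ when proving the "equidistributed'' direction, by applying the normalized statement to $h\,a$.

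For necessity, suppose the growth condition fails, so there is $p\in\Q[t]$ with $\lim_{t\to+\infty}|a(t)-p(t)|/\log t=L<\infty$ (the limit exists since $a-p$ lies in a Hardy field). Put $q:=a-p$, so $q(t)=O(\log t)$ and in particular $q\prec t^\varepsilon$ for every $\varepsilon>0$. Choose $h\in\N$ with $h\,p(n)\in\Z$ for all $n\in\Z$, possible because $p$ has rational coefficients; then $e(h\,a(n))=e(h\,q(n))$. By Lemma~\ref{L:derivative}(ii) we have $q'(t)\to 0$, so summation by parts gives
$$
\frac{1}{N}\sum_{n=1}^N e(h\,q(n))-\frac{1}{N}\int_1^N e(h\,q(t))\,dt\to 0,
$$
and it remains to see that the Cesàro average of the integral does not vanish. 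When $q\to c$ is bounded this is immediate. When $q\sim\beta\log t$ with $\beta\neq0$ the model computation $\int_1^N e(h\beta\log t)\,dt\sim \frac{N^{1+2\pi i h\beta}}{1+2\pi i h\beta}$ has modulus $\asymp N$, so the Cesàro average circles at a fixed positive radius. When $q$ is unbounded but $q=o(\log t)$, the increment $q(N)-q(N/2)\to0$ forces $e(h\,q(t))$ to be nearly constant over the block $[N/2,N]$, whence the integral is $\gg N$. In every case $\frac1N\sum e(h\,a(n))\not\to0$, so $(a(n))$ is not equidistributed.

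For sufficiency, assume the growth condition and (after normalizing $h=1$) aim for $\frac1N\sum_{n\le N}e(a(n))\to0$ via van der Corput's method. Because every derivative of a Hardy field function is eventually monotone and of constant sign, the classical derivative tests apply freely. The core tool is the second-derivative estimate: on a block of length $\asymp N$ on which $|a''(t)|\asymp\lambda$ one has $\sum_{n\asymp N}e(a(n))\ll N\lambda^{1/2}+\lambda^{-1/2}$, with higher-order tests (or iterated van der Corput differencing) available for faster growth. The Hardy field hypothesis lets me compute $|a^{(k)}(t)|$ asymptotically by iterating Lemma~\ref{L:derivative}, and the logarithmic threshold is exactly the break-even point of these estimates: for $a\sim C(\log t)^\gamma$ one gets $a''(t)\asymp (\log t)^{\gamma-1}/t^2$, so $\lambda\asymp(\log N)^{\gamma-1}/N^2$ and the estimate reads $(\log N)^{(\gamma-1)/2}+N(\log N)^{-(\gamma-1)/2}=o(N)$ precisely when $\gamma>1$, i.e. precisely when $a/\log t\to\infty$. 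I would organize the general case as an induction on the integer part of the growth degree of $a$, peeling off one derivative test (or differencing step) at a time and carrying the slowly varying logarithmic factors along explicitly.

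The main obstacle is exactly this inductive bookkeeping, which is the delicate heart of the theorem. Naive differencing does not preserve the hypothesis: if $a-p=(\log t)^2$ then $a$ satisfies the condition, yet the difference $\bigl(\log(t+h)\bigr)^2-(\log t)^2\to0$, so the differenced sequence sits within $o(1)$ of the rational polynomial $\Delta_h p$ and is not equidistributed. Hence the induction cannot merely difference and quote a lower-order equidistribution statement; it must apply the derivative test of the correct order to the genuine, possibly slowly varying, growth of $a$, tracking the logarithmic factors so that the error terms tip to $o(N)$ exactly under the stated condition. Verifying this matching uniformly across all dyadic blocks, and checking that the growth condition transfers to whatever Hardy field function each derivative test is applied to, is where the real work lies.
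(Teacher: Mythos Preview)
The paper does not prove this theorem: it is quoted from Boshernitzan~\cite{Bos94} and used as a black box, so there is no in-paper argument to compare against.

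As for your sketch itself: the necessity direction is sound in outline, though the $q\sim\beta\log t$ case is handled too loosely---you cannot simply substitute $\beta\log t$ for $q$ in the integral; you need to use that $q'(t)\sim\beta/t$ (which does follow from the Hardy field assumption via l'Hospital) and then either change variables or integrate by parts to control the oscillatory integral. The sufficiency direction is, as you yourself concede, a plan rather than a proof. Your diagnosis of the obstacle is correct: differencing can destroy the hypothesis (your $(\log t)^2$ example is exactly the point), so the argument cannot be a clean induction on degree via Weyl--van der Corput differencing alone. One really must apply the $k$-th derivative test of the right order and track the slowly varying factors explicitly; Boshernitzan's original proof does essentially this, and the case analysis and bookkeeping occupy the bulk of his paper. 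What you have written is a reasonable roadmap for someone who intends to read \cite{Bos94}, but it is not itself a proof, and the gap you identify in the last paragraph is a genuine one that you have not closed.
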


\section{Proof of results concerning Hardy field sequences}
In this section we will prove the results stated in Section~\ref{SS:HardySequence}.

\subsection{A preliminary result}
We start with a preliminary equidistribution result for Hardy field sequences of sublinear growth. It will be used to define the measure $\lambda$ that appears in the description of the systems ${\bf X}_d$ that are used  in  Theorem~\ref{T:HardySequence}.
	
	\begin{lemma}\label{L:sublinear}
		Let $a\colon \R_+\to \R$ be a Hardy field function such that $a(t)\prec t$. Let $\CP_a$ denote the
		set of probability measures on $\T$ that are weak-star limit points of the sequence of probability measures  $\E_{n\leq N}\,  \delta_{a(n)}$, $N\in \N$. Then the following hold:
			\begin{enumerate}
			\item If $a(t)\succ \log{t}$, then
			$\CP_a=\{m_\T\}$.

			\item If $ a(t)\sim \log{t}$, then
			$\CP_a$ is not a singleton and all its elements are absolutely continuous with respect to $m_\T$.

			\item If $ 1\prec a(t)\prec \log{t}$, then
			$\CP_a=\{\delta_{\alpha}, \alpha\in \T\}$.

			\item If none of the above applies, then $\lim_{t\to+\infty}a(t)=\alpha$, for some $\alpha \in \R$ and
			$\CP_a=\{\delta_{\alpha}\}$. 			
		\end{enumerate}		
		\end{lemma}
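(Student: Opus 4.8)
The plan is to organize the four cases by the growth rate of $a$ and then analyze the empirical measures $\E_{n\le N}\,\delta_{a(n)}$ directly through the counting function of $a$. Since $a$ lies in a Hardy field and $\log t,\,1\in\mathcal{LE}$, the limits $\lim_{t\to+\infty}a(t)/\log t$ and $\lim_{t\to+\infty}a(t)$ both exist in $[-\infty,+\infty]$. These dichotomies make the four cases exhaustive: $\lim a/\log t=\pm\infty$ is exactly $a\succ\log t$ (case~(i)); a nonzero finite limit is $a\sim\log t$ (case~(ii)); and $\lim a/\log t=0$ means $a\prec\log t$, which splits according to whether $\lim a=\pm\infty$ (then $1\prec a\prec\log t$, case~(iii)) or $\lim a=\alpha$ is finite (case~(iv)). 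In particular, if none of (i)--(iii) holds then $a$ is bounded and monotone, hence $a(t)\to\alpha$ for some $\alpha\in\R$; this settles the exhaustiveness claim and the first assertion of~(iv). For~(iv) itself, $a(n)\to\alpha$ gives $f(a(n))\to f(\alpha)$ for every $f\in C(\T)$, so $\E_{n\le N}f(a(n))\to f(\alpha)$ by Ces\`aro summation, whence $\CP_a=\{\delta_\alpha\}$.

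For case~(i) I would invoke Boshernitzan's criterion (Theorem~\ref{T:Boshernitzan}) and simply check its Diophantine condition: for every $p\in\Q[t]$ one has $|a(t)-p(t)|/\log t\to+\infty$. If $p$ is non-constant then $p$ grows at least linearly while $a(t)\prec t$, so $|a(t)-p(t)|\sim|p(t)|$ grows polynomially and dominates $\log t$; if $p\equiv c$ is constant then $|a(t)-c|\sim|a(t)|\succ\log t$ because $\log t\prec a$. Hence $(a(n))$ is equidistributed $\bmod 1$ and $\CP_a=\{m_\T\}$.

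The substance is in cases~(ii) and~(iii), where $a$ is (after possibly replacing $a$ by $-a$, which only reflects $\T$) eventually increasing to $+\infty$ with $a'(t)\to0$ and $a'$ eventually monotone, by Lemma~\ref{L:derivative}(ii). The main point is to understand how the mass of $\E_{n\le N}\,\delta_{a(n)}$ is distributed below the top value $a(N)$. Because $a$ is increasing, $\#\{n\le N:\ a(n)\le s\}=\lfloor a^{-1}(s)\rfloor$, so for fixed $u>0$ and $M=M(N)$ the largest integer with $a(M)\le a(N)-u$ I would estimate $M/N$ by writing $\log(N/M)=\int_M^N\frac{dt}{t}$ and $a(N)-a(M)=\int_M^N a'(t)\,dt$ and comparing the integrands through the ratio $\frac{1/t}{a'(t)}=\frac{1}{t\,a'(t)}$. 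By l'Hospital's rule $t\,a'(t)\to\lim_{t\to+\infty}a(t)/\log t$, which equals the finite constant $c\ne0$ in case~(ii) and equals $0$ in case~(iii). Consequently $\log(N/M)\to u/c$, i.e. $M/N\to e^{-u/c}$, in case~(ii), while $\log(N/M)\to+\infty$, i.e. $M/N\to0$, in case~(iii). I will also use repeatedly that, since $a(n)\to+\infty$ with consecutive gaps $a(n+1)-a(n)\to0$, the fractional parts $\{a(n)\}$ are dense in $\T$, so every $\gamma\in\T$ is a subsequential limit of $\{a(N_k)\}$.

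In case~(iii) the estimate $M/N\to0$ says that for every $\delta\in(0,1/2)$ all but a vanishing fraction of the indices $n\le N$ satisfy $a(n)\in(a(N)-\delta,a(N)]$, hence lie within $\delta$ of $a(N)$ on $\T$; testing against a Lipschitz $f$ and letting $\delta\to0$ shows that along any subsequence with $\{a(N_k)\}\to\beta$ one has $\E_{n\le N_k}\,\delta_{a(n)}\to\delta_\beta$. Every subsequential limit is therefore a point mass, and by density every $\delta_\beta$ occurs, giving $\CP_a=\{\delta_\alpha:\alpha\in\T\}$. In case~(ii) the same estimates show that the distribution of $a(N)-a(n)$, for $n$ uniform in $\{1,\dots,N\}$, converges to the exponential law $\frac{1}{c}e^{-u/c}\,du$ on $[0,\infty)$; pushing this forward to $\T$ through $u\mapsto\{\gamma-u\}$, where $\gamma=\lim\{a(N_k)\}$, yields a limit measure with the explicit density $\rho_\gamma(x)=\big(c(1-e^{-1/c})\big)^{-1}e^{-\{\gamma-x\}/c}$, which is bounded; hence every element of $\CP_a$ is absolutely continuous with respect to $m_\T$, and since $\rho_0\ne\rho_{1/2}$ the set $\CP_a$ is not a singleton. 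I expect case~(ii) to be the main obstacle: one must justify the convergence to the exponential law uniformly enough to identify the limiting density despite possible lower-order terms in $a$, and then carry out the wrap-around (pushforward $\bmod 1$) carefully; cases~(i) and~(iv) are immediate and~(iii) is a soft concentration argument.
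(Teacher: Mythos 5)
Your proposal is correct, and while parts (i), (iii) and (iv) follow essentially the paper's route (the paper also reduces (i) to Boshernitzan's criterion, treats (iv) as immediate, and proves (iii) by a concentration argument --- it uses $a(t)-a(rt)\to 0$ for fixed $r\in(0,1)$ and the windows $[rN_k,N_k]$, where you instead cut at the value level $a(N)-u$ and show $M(N,u)/N\to 0$; these are reparametrizations of the same idea), your treatment of case (ii) is genuinely different and in fact sharper. The paper splits (ii) into two separate soft arguments: non-uniqueness is obtained by contradiction from the divergence of $\E_{n\in[N]}e(a(n))$ (using $a(t+1)-a(t)\to0$ and $\sup_{s\in[ct,t]}|a(t)-a(s)|\to0$), and absolute continuity is obtained from the one-sided density bound $\limsup_N|\{n\le N:\{a(n)\}\in[c,d]\}|/N\le C(d-c)$, proved by summing lengths of the preimage intervals $[a^{-1}(k+c),a^{-1}(k+d)]$ via the asymptotics of $(a^{-1})'$. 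You instead identify every element of $\CP_a$ exactly, as the wrap-around of an exponential law of parameter $1/c$ shifted by $\gamma=\lim\{a(N_k)\}$, with explicit bounded density $\rho_\gamma$; both conclusions of (ii) then drop out at once, and you get the full classification of $\CP_a$ as a bonus. The computation underpinning this, $ta'(t)\to c=\lim a(t)/\log t$ by l'Hospital within the Hardy field (which also kills the lower-order terms you were worried about) followed by $\log(N/M)\to u/c$, is sound, and the identification of the limit is cleanest via Fourier coefficients: $\E_{n\le N_k}e(ja(n))=e(ja(N_k))\,\E_{n\le N_k}e(-j(a(N_k)-a(n)))\to e(j\gamma)(1+2\pi ijc)^{-1}$, whose square-summability gives absolute continuity directly. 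The only price of your route is that one must carry out this convergence-in-distribution and pushforward argument carefully, whereas the paper's two arguments are each softer; the gain is an explicit description of the limit measures that the paper does not provide.
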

	\begin{proof}
		Note that $(i)$ follows from Theorem~\ref{T:Boshernitzan} and that $(iv)$ is trivial.
		
		We prove $(ii)$. We first show that $\mathcal{P}_a$ is not unique.  In this step we will only use  that $1\prec a(t)\ll \log{t}$.  It suffices to show that the
	averages $\E_{n\in[N]}\, e(a(n))$ do not converge as $N\to \infty$. Arguing by contradiction suppose that
	 \begin{equation}\label{E:alpha}
	 \lim_{N\to\infty}\E_{n\in[N]}\, e(a(n))=\alpha
	 \end{equation}
	 for some $\alpha \in \C$.
	Let $\beta \in \S^1$ with $\beta\neq \alpha$. Our assumptions and the mean value theorem imply that   $a(t+1)-a(t)\to 0$ as $t\to+\infty$. Using this and that $|a(t)|\to +\infty$  as $t\to +\infty$ we get  that there exist  $N_k\to+\infty$
	such  that $e(a(N_k))\to \beta$. Moreover, our assumptions and the mean value theorem easily imply  that $\lim_{c\to 1^-}\sup_{s\in [ct,t]}|a(t)-a(s)|=0$, hence
	$$
	\lim_{c\to 1^-}\lim_{k\to\infty} \E_{n\in[cN_k,N_k]}\, e(a(n))=\beta\neq \alpha,
	$$
	which  contradicts \eqref{E:alpha}.

		Next we show that if $\lambda\in \mathcal{P}_a$, then  $\lambda\ll m_\T$. Without loss of generality we can assume that $a(t)\to +\infty$ as $t\to+\infty$ and for simplicity we assume that $a(t)$ (and hence $a^{-1}(t)$)  is strictly increasing on $\R_+$ and $a'(t)$ is positive and strictly decreasing on $\R_+$.
		It suffices to show that there exists a constant $C>0$ that depends only on $a(t)$, such that for every $c,d\in[0,1)$ with $c<d$ we have
		\begin{equation}\label{E:limsup}
		\limsup_{N\to\infty} \frac{|\{n\in [N]\colon \{a(n)\}\in [c,d]\}|}{N}\leq C (d-c).
		\end{equation}
		If we prove this, then   $\lambda\leq C\, m_\T$, hence $\lambda\ll m_\T$.
		
		To this end,  let
		$$
		A_N:=\{n\in [N]\colon \{a(n)\}\in [c,d]\}
		$$
		and note that
		$$
		A_N=\bigcup_{k=0}^{[a(N)]}\{n\in [N]\colon k+c\leq a(n)\leq k+d\}=\\
		 \bigcup_{k=0}^{[a(N)]}\{[a^{-1}(k+c), a^{-1}(k+d)]\cap [N]\}.
		$$
		Since $\big||[a^{-1}(k+c), a^{-1}(k+d)]\cap [N]|-\big(a^{-1}(k+d)-a^{-1}(k+c)\big)\big|\leq 1$ for $k=0,\ldots, [a(N)]-1$, we have
		$$
		|A_N|=\sum_{k=0}^{[a(N)]-1} \big(a^{-1}(k+d)-a^{-1}(k+c)\big)+r_N+ O(a(N))
		$$
		where 
		 $$
		 r_N:=|[a^{-1}([a(N)]+c), a^{-1}(R_N)]\cap [N]| \,
		\text{ and  } \,  R_N:=\min\{[a(N)]+d,a(N)\}.
		$$
		Hence, using the mean value theorem we get that
			\begin{equation}\label{E:AN}
		|A_N|=(d-c)\sum_{k=0}^{[a(N)]-1} (a^{-1})'(\xi_k)+r_N+ O(a(N))
		\end{equation}
		where $\xi_k\in [k+c, k+d]$ for $k=0,\ldots, [a(N)]-1$.

		Next, note that our assumption gives   $a(t)= C_1\log{t}+e_1(t)$ for some $C_1>0$ and $e_1(t)\prec \log{t}$. Since $a(t)$ is a Hardy field function, using l'Hospital's rule we deduce that  $a'(t)=\frac{C_1}{t}+e_2(t)$ where $e_2(t)\prec \frac{1}{t}$. It follows from this that
	 	 \begin{equation}\label{E:e3}
	 (a^{-1})'(t)=\frac{1}{a'(a^{-1}(t))}=C_1^{-1}a^{-1}(t)+e_3(t)
	 \end{equation}
	  where $e_3(t)\prec a^{-1}(t)$.  Hence, for some $C_2>0$ that depends only on $a(t)$, we have
		 $$
		   C_2\cdot (a^{-1})'(\xi_k)\leq a^{-1}(\xi_k)\leq a^{-1}(k+1)
		 $$
		 for $k=0,\ldots,  [a(N)]-1$.
		Using this
		  we get that 
		$$
		 C_2\sum_{k=0}^{[a(N)]-1} (a^{-1})'(\xi_k)\leq \sum_{k=0}^{[a(N)]-1} (a^{-1})(k+1) \leq
		 		 \int_1^{a(N)}a^{-1}(t)\, dt+ N\sim N,
		$$
		where to get the asymptotic for the integral we use l'Hospital's rule, the fundamental theorem of calculus, and that $a'(t)\sim \frac{1}{t}$.
		
		 Finally, we treat the term $r_N$. First note that if  $a(N)\leq [a(N)]+c$, then $r_N=0$. So we can assume that $a(N)> [a(N)]+c$   in which case we have $R_N> [a(N)]+c$. We have that
		$$
		|r_N- \big(a^{-1}(R_N)-a^{-1}([a(N)]+c)\big)|\leq 1,
		$$
		and as before, using the mean value theorem and \eqref{E:e3}, we get that there is a constant $C_3>0$ such that
		$$
		C_3(a^{-1}(R_N)-a^{-1}([a(N)]+c))\leq (R_N-([a(N)]+c)) \,  a^{-1}(a(N))\leq (d-c)N,
		$$
		where we used that $R_N\leq a(N)$ to justify the first estimate and that  $R_N\leq [a(N)]+d$ to justify  the second estimate.
		
		Inserting these estimates in \eqref{E:AN} and using that $a(N)/N\to 0$  as $N\to\infty$,  we deduce that \eqref{E:limsup} holds for some $C>0$.

		We prove $(iii)$. First note that  since $1\prec a(t)\prec t$ and $a(t+1)-a(t)\to 0$  as $t\to+\infty$ (by Lemma~\ref{L:derivative}) we have that the sequence $(a(n))$ is dense in $\T$.
		
Let $\alpha\in[0,1]$. 		 It suffices to show   that if $N_k\to \infty$ is such that $\{a(N_k)\}\to \alpha$, then
		 $$
		 \lim_{k\to\infty}\E_{n\in[N_k]}\delta_{\{a(n)\}}=\delta_\alpha
		 $$
		  where the limit is a weak-star limit.
		Suppose that $\alpha\in (0,1)$ (the argument is similar if $\alpha=0$ or $1$) and $0<c<\alpha<d<1$. It suffices to show that
		\begin{equation}\label{E:lim}
		\lim_{k\to\infty} \frac{|\{n\in [N_k]\colon \{a(n)\}\in [c,d]\}|}{N_k}=1.
		\end{equation}
		
		We first  claim that for every $r\in (0,1)$ we have that
		\begin{equation}\label{E:ct}
		\lim_{t\to +\infty} (a(t)-a(rt))=0.
		\end{equation}
		To see this, notice  first that our assumption $a(t)\prec \log{t}$ and l'Hospital's rule imply that $|a'(t)|$ is eventually decreasing and $a'(t)\prec \frac{1}{t}$. Using this and the mean value theorem, we  deduce that for all large enough $t\in \R$ we have
		$$
		|a(t)-a(rt)|\leq |(1-r)t a'(rt)|\to 0 \quad \text{ as } \, t\to+\infty.
		$$
		
		Since $\{a(N_k)\}\to \alpha\in (c,d)$, it follows from \eqref{E:ct} that for large enough $k\in\N$ we have that
			$$
		[rN_k,N_k]\subset \{n\in [N_k]\colon \{a(n)\}\in [c,d]\}.
		$$
		Hence,
		$$
		\liminf_{k\to\infty} \frac{|\{n\in [N_k]\colon \{a(n)\}\in [c,d]\}|}{N_k}\geq 1-r.
		$$
	 Since $r\in (0,1)$ is arbitrary, letting $r\to 0^+$ we deduce \eqref{E:lim}. This completes the proof.
		\end{proof}

\subsection{Proof of Theorem~\ref{T:HardySequence}}
We are going to deduce Theorem~\ref{T:HardySequence} from the following result:
\begin{proposition}\label{P:fractional2}
		Let $a\colon \R_+\to \R$ be a Hardy field function such that  for some $d\in \Z_+$ one has  $t^d \prec a(t)\prec t^{d+1}$  or $a(t)=t^d\alpha+\tilde{a}(t)$ where $\tilde{a}(t)\prec t^d$ and $\alpha$ is irrational. We define the measure $\lambda$ on $\T$ by  $$
	\lambda:=\lim_{k\to\infty}\E_{n\in [M_k]} \, \delta_{c(n)} \quad \text{where} \quad  c(t):=a^{(d)}(t)/d!,
	$$
	assuming that the previous weak-star limit exists for the sequence $M_k\to+\infty$.  Then the sequence $b(n):=e(a(n))$, $n\in \N$,  admits correlations on $\bM:=([M_k])_{k\in\N}$ and the F-system of $b$ on $\bM$ is isomorphic to the system $(\T^{d+1}, \lambda\times m_{\T^d}, S_d)$ where
	$S_d\colon \T^{d+1}\to \T^{d+1}$ is defined by
	$$
	S_d(y_0,\ldots, y_d)=(y_0,y_1+y_0,\ldots, y_d+y_{d-1}), \quad y_0,\ldots, y_d\in \T.
	$$
	\end{proposition}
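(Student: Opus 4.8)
The plan is to reduce the whole statement to a computation of correlations and then to match them against the model system. By the correspondence principle \eqref{E:correspondence}, the Furstenberg system of $b(n)=e(a(n))$ on $\bM$ is completely determined by the limits
$$
C(\bar n,\bar\epsilon):=\E_{m\in\bM}\prod_{j=1}^s e\big(\epsilon_j\,a(m+n_j)\big),\qquad \epsilon_1,\ldots,\epsilon_s\in\{-1,1\},
$$
so it suffices to show that these limits exist and coincide with the corresponding correlations of $f(y):=e(y_d)$ in the system $(\T^{d+1},\lambda\times m_{\T^d},S_d)$. First I would linearise the phase. For each fixed $n$, Taylor's formula with Lagrange remainder gives $a(m+n)=\sum_{k=0}^d \tfrac{a^{(k)}(m)}{k!}n^k+\tfrac{a^{(d+1)}(\xi)}{(d+1)!}n^{d+1}$ with $\xi\in(m,m+n)$; the growth hypothesis together with an iteration of Lemma~\ref{L:derivative} yields $a^{(d)}(t)\prec t$, hence $a^{(d+1)}(t)\to 0$, so for each fixed $n$ the remainder tends to $0$ as $m\to\infty$. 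Consequently
$$
\prod_{j=1}^s e\big(\epsilon_j\,a(m+n_j)\big)=e\Big(\sum_{k=0}^d u_k(m)\,P_k\Big)+o(1),\qquad u_k(m):=\frac{a^{(k)}(m)}{k!},\quad P_k:=\sum_{j=1}^s\epsilon_j n_j^k,
$$
so that $C(\bar n,\bar\epsilon)$ is governed entirely by the limiting joint distribution $\mathrm{mod}\ 1$ of the coefficient vector $(u_0(m),\ldots,u_d(m))$ along $\bM$.

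The heart of the argument, and the step I expect to be the main obstacle, is the following joint equidistribution statement: along $\bM$ the vector $(u_0(m),\ldots,u_{d-1}(m))$ equidistributes on $\T^d$ and is asymptotically independent of $u_d(m)=c(m)$, whose law along $\bM$ is $\lambda$ by the very definition of $\lambda$. By the Weyl criterion this reduces to showing that $\E_{m\in\bM}e(h(m))=0$ for every integer vector $(\ell_0,\ldots,\ell_d)$ with $(\ell_0,\ldots,\ell_{d-1})\neq 0$, where $h:=\sum_{k=0}^d \ell_k\,a^{(k)}/k!$ is a Hardy field function. Its growth is controlled by the largest index with a nonzero coefficient, and under the standing hypotheses one checks that $|h(t)-p(t)|/\log t\to\infty$ for every $p\in\Q[t]$, so Theorem~\ref{T:Boshernitzan} gives equidistribution of $(h(n))$ and hence the vanishing; when the lower coefficients vanish, the surviving average is $\hat\lambda(P_d)$ by definition of $\lambda$. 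The delicate point is separating the possibly slowly growing top coefficient (which produces $\lambda$) from the faster lower ones (which produce Haar measure). Under $t^d\prec a\prec t^{d+1}$ the lower derivatives grow like genuine powers, so ordinary Weyl estimates control them, while the borderline regime in which $a^{(d)}$ grows only logarithmically is precisely what forces the use of Boshernitzan's sharp criterion; in the remaining case $a(t)=t^d\alpha+\tilde a(t)$ one has $c(t)\to\alpha$, the top coordinate degenerates to a point mass, and the lower coordinates become honest polynomials with irrational leading coefficients.

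It then remains to match this with the model and to upgrade ``same correlations'' to an isomorphism. For $S_d$ as in \eqref{E:unipotent} and $f=e(y_d)$ one computes $f(S_d^n y)=e\big(\sum_{k=0}^d\binom{n}{k}y_{d-k}\big)$, so that the model correlation equals $\hat\lambda\big(\sum_j\epsilon_j\binom{n_j}{d}\big)\prod_{k<d}\one\big[\sum_j\epsilon_j\binom{n_j}{k}=0\big]$. The unipotent integer change of basis between the monomials $\{n^k\}$ and the binomials $\{\binom{n}{k}\}$ identifies the vanishing conditions coming from the Haar coordinates and, on the $S_d$-invariant coordinate $y_0$, carries the leading coefficient onto $c(m)$, whose limiting law is $\lambda$; thus the two families of correlations coincide and the limits defining $C(\bar n,\bar\epsilon)$ exist, i.e.\ $b$ admits correlations on $\bM$. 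Finally I would realise the isomorphism explicitly by $\Phi\colon\T^{d+1}\to(\S^1)^\Z$, $\Phi(y):=\big(e((S_d^n y)_d)\big)_{n\in\Z}$: this map intertwines $S_d$ with the shift, pushes $\lambda\times m_{\T^d}$ forward to the measure $\mu$ of the Furstenberg system by the correlation identity just established, and is injective because the functions $y\mapsto e((S_d^n y)_d)$ separate points of $\T^{d+1}$ (they recover each coordinate $y_i$). Hence the F-system of $b$ on $\bM$ is isomorphic to $(\T^{d+1},\lambda\times m_{\T^d},S_d)$, as claimed.
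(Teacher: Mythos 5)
Your proposal is correct and follows essentially the same route as the paper: Taylor expansion to write the correlation phase as $\sum_k u_k(m)P_k$ with $u_k=a^{(k)}/k!$, Boshernitzan's criterion to kill every frequency vector with a nonzero lower coefficient (leaving $\hat\lambda(P_d)$ in the degenerate case), the binomial expansion $f(S_d^ny)=e\big(\sum_k\binom{n}{k}y_{d-k}\big)$ with the monomial/binomial change of basis to match the model's correlations, and the explicit injective intertwining map $\Phi(y)=(f(S_d^ny))_{n\in\Z}$ to upgrade equality of correlations to an isomorphism. The paper packages the middle step as a case analysis on each individual exponential sum (Lemmas~\ref{L:33} and \ref{L:34}) rather than as a joint equidistribution statement for the derivative vector, but the underlying computation is identical.
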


Let us first see how we deduce  Theorem~\ref{T:HardySequence} from Proposition~\ref{P:fractional2}.
\begin{proof}[Proof of Theorem~\ref{T:HardySequence} assuming  Proposition~\ref{P:fractional2}]
	First note that since  $\phi\colon \S^1\to  \T$ given by $\phi(e(t)):=t \mod{1}$ is well defined, continuous,  injective, and $\phi(e(a(n)))=a(n)\mod{1}$, $n\in\N$, using  Proposition~\ref{P:ImageRiemann} we get  that it suffices to prove the stated properties for the sequence $b(n):=e(a(n))$, $n\in\N$.
	
	
We move now to the proof of the four parts of the theorem for the sequence $b(n)=e(a(n))$, $n\in\N$.
	Recall that $c(t)=a^{(d)}(t)/d!$, $t\in \R_+$.

We establish Part~$(i)$. First notice that our assumption $t^d\log{t}\prec a(t)\prec t^{d+1}$ and Lemma~\ref{L:derivative} imply that $\log{t}\prec c(t)\prec t$. It follows from Theorem~\ref{T:Boshernitzan}
	that the sequence $(c(n))$ is equidistributed on $\T$. So in this case $\lambda=m_\T$ and  Proposition~\ref{P:fractional2} gives that the sequence $(b(n))$ has a unique F-system that is isomorphic to the system $(\T^d,m_{\T^{d+1}}, S_d)$.
	
We establish	 Parts~$(ii)$ and $(iii)$. Let $(X,\mu,T)$ be an F-system of $b$ on some sequence of intervals $\bM$. Then  Proposition~\ref{P:fractional2} gives that  $(X,\mu,T)$ is isomorphic to the system $(\T^d,\lambda\times m_{\T^d}, S)$.
		Lastly, we show that the sequence $(b(n))$ does not have a unique F-system. For this, it suffices to show that the sequence of measures  $\E_{m\in [M]}\, \delta_{T^mb}$, $M\in \N$, on $(\S^1)^\Z$ does not
		converge weak-star as $M\to \infty$. This would follow if we show that for some $k\in \Z$ the limit
	$$
\lim_{M\to\infty}	\E_{m\in [M]} \,  e(k\Delta^d a(m))
	$$
(which is a correlation of the sequence $b$)	does not exist, where for $a\colon \R_+\to \R$ we let $(\Delta^0 a)(t):=a(t)$, $t\in \R_+$, and for $i\in \Z_+$ we let  $(\Delta^{i+1}a)(t):=a(t+1)-a(t)$, $t\in \R_+$. Note that since $t^d\prec a(t)\ll t^d\log{t}$, Lemma~\ref{L:derivative} gives that  $1\prec \Delta^d a(t)\ll \log{t}$. Hence,  Part~$(iii)$ of Lemma~\ref{L:sublinear} gives that for some $k\in\Z$ the limit
	$\lim_{M\to\infty}\E_{m\in [M]} \,  e(k\Delta^d a(m))$ does not exist.
	
We establish	 Part~$(iv)$.		In this case Lemma~\ref{L:derivative} gives that  $c(t)=a^{(d)}(t)/d!=\alpha/d!+e(t)$, $t\in \R_+$,
	where $e(t):= \tilde{a}^{(d)}(t)\to 0$ as $t\to+\infty$. By Part~$(iv)$ of Lemma~\ref{L:sublinear}  the weak-star limit  $\lim_{M\to\infty}\E_{m\in [M]}\delta_{c(m)}$ exists and is equal to the point mass $\delta_{\frac{\alpha}{d!}}$.
Hence, 	Proposition~\ref{P:fractional2} gives that the sequence $b$ has a unique F-system that is isomorphic
   to the system $(\T^{d+1}, \delta_{\frac{\alpha}{d!}}\times m_{\T^d}, S_d)$. This system is easily shown to be isomorphic to the system $(\T^d,m_{\T^d}, S_d')$, where $S_d'\colon \T^d\to \T^d$ is defined by
	$$
	S'_d(y_1,\ldots, y_d):=(y_1+\alpha/d!,y_2+y_1,\ldots, y_d+y_{d-1}), \quad y_1,\ldots, y_d\in \T.
	$$
	Lastly, since $\alpha$ is irrational, it is well known that this system is totally ergodic.	
	
We establish Part~$(v)$. Suppose that $a(t)$ does not satisfy any  of the conditions in $(i)$-$(iv)$.
	We first claim that    $a(t)=p(t)+\epsilon(t)+\tilde{a}(t)$ for some   $p\in \mathbb{Q}[t]$, $\epsilon\colon \R_+\to\R$ with  $\epsilon(t)\to 0$,  and  $\tilde{a}$ is  a Hardy field function that is covered in cases  $(i)$-$(iv)$. To see this, let $d$ be the largest non-negative integer such that $t^d\ll a(t)$ (then $a(t)\prec t^{d+1}$).
	If $t^d\prec a(t)$, then $a(t)$ is covered
	 by Parts~$(i)$-$(iii)$. 	If $t^d\prec a(t)$ is not satisfied, since  $t^d\ll a(t)$, we have that
	$\lim_{t\to +\infty}a(t)/t^d=:\alpha_d\in \R$. If $\alpha_d$ is irrational, then $a(t)$ is covered by  Part~$(iv)$. If $\alpha_d$ is rational, then $a(t)=t^d\alpha_d+a_1(t)$ where
	$a_1(t):=a(t)-t^d\alpha_d$ satisfies $a_1(t)\prec t^d$. Continuing  like that, we find that there exists $k\in \{0,\ldots, d\}$,  $\alpha_k,\ldots, \alpha_d\in \Q$, $\epsilon\colon \R_+\to \R$,  $\epsilon(t)\to 0$, and  $a_k\colon \R_+\to \R$ such that  $a_k(t)\prec t^k$ and  $a_k(t):=a(t)-(\epsilon(t)+t^k\alpha_k+\cdots+t^d\alpha_d)$ is covered by Parts  $(i)$-$(iv)$ (note that $\epsilon(t)$ is needed only when $k=0$). Then $a(t)=p(t)+\epsilon(t)+\tilde{a}(t)$ with $p(t):=t^k\alpha_k+\cdots+t^d\alpha_d$ and $\tilde{a}(t)=a_k(t)$. Lastly, let  $r$ be the least common multiple of the coefficients of
	$p(t)$ and $k\in \{0,\ldots, r-1\}$. Then $p(rn+k)\in \Z[t]$, hence $b(rn+k)-e(\tilde{a}(rn+k))\to 0$. It follows that the sequences  $(b(rn+k))$ and $(e(\tilde{a}(rn+k)))$ have the same F-systems. Note also that
$a(t)$ satisfies conditions $(i)$-$(iv)$ if and only if the same holds for	$\tilde{a}(rt+k)$.

		
	
	This completes the proof.	
	\end{proof}

Next we move to the proof of  Proposition~\ref{P:fractional2}, which will be based on the following two lemmas. The first one allows us to compute correlations of sequences of the form $(e(a(n)))$ where $a(t)$
is any Hardy field function with at most polynomial growth, and the second one correlations of the sequence $(S_d^nf)$ where $S_d\colon \T^{d+1}\to \T^{d+1}$ is as in Proposition~\ref{P:fractional2} and $f\in C(\T^{d+1})$ is suitably chosen. Our aim is to show that the correlations of the two sequences coincide.

\begin{lemma}\label{L:33}
	Let $a(t)$,   $c(t)$, $\bM$, and $\lambda$
	be as in the statement of Proposition~\ref{P:fractional2}.
	 Then for every $s\in \N$ and $n_1,\ldots, n_s\in \Z$, $k_1,\ldots,k_s\in \{-1,+1\}$, the limit
	$$
	\E_{n\in\bM} \prod_{j=1}^s e(k_ja(n+n_j))
	$$
	exists. Furthermore, if $l_d:=\sum_{j=1}^sk_j   n_j^d$, then this limit  is equal to $\int e(l_d\, t)\, d\lambda(t)$ if $\sum_{j=1}^s k_jn_j^i=0$ for $i=0,\ldots, d-1$, and is equal to $0$ otherwise.
\end{lemma}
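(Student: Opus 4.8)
The plan is to linearize: since $\prod_{j=1}^s e(k_j a(n+n_j)) = e(b(n))$ for $b(t):=\sum_{j=1}^s k_j a(t+n_j)$, and $b$ lies in the same translation-invariant Hardy field as $a$ and inherits at most polynomial growth, it is enough to evaluate $\E_{n\in\bM} e(b(n))$. Setting $C_i:=\sum_{j=1}^s k_j n_j^i$ (so $C_d=l_d$), I would Taylor-expand each summand to order $d$ with Lagrange remainder, getting
\[
b(t)=\sum_{i=0}^d \frac{a^{(i)}(t)}{i!}\,C_i+\sum_{j=1}^s k_j\,\frac{a^{(d+1)}(\xi_{j,t})}{(d+1)!}\,n_j^{d+1},
\]
where $\xi_{j,t}$ lies between $t$ and $t+n_j$. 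In both cases of Proposition~\ref{P:fractional2} one checks $a^{(d)}(t)\prec t$, so Lemma~\ref{L:derivative}(ii) gives $a^{(d+1)}(t)\to 0$; as $\xi_{j,t}\to+\infty$, the remainder tends to $0$ and hence $b(t)=\sum_{i=0}^d \frac{a^{(i)}(t)}{i!}C_i+o(1)$.

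Suppose first that $C_i=0$ for $i=0,\dots,d-1$. Then the expansion collapses to $b(t)=\frac{C_d}{d!}a^{(d)}(t)+o(1)=l_d\,c(t)+o(1)$, recalling $c(t)=a^{(d)}(t)/d!$. Since $|e(b(n))-e(l_d c(n))|\le 2\pi|b(n)-l_dc(n)|\to 0$, the averages of $e(b(n))$ and of $e(l_dc(n))$ over $\bM$ share the same limit. As $l_d\in\Z$, the character $t\mapsto e(l_d t)$ is continuous on $\T$, so by the very definition of $\lambda$ as the weak-star limit of $\E_{n\in[M_k]}\delta_{c(n)}$ we obtain $\E_{n\in[M_k]}e(l_dc(n))\to\int e(l_d t)\,d\lambda(t)$. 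Thus the limit exists and equals $\int e(l_d t)\,d\lambda(t)$.

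Now suppose $C_i\neq 0$ for some $i<d$, and let $m:=\min\{i:C_i\neq 0\}<d$. Then $b(t)=\sum_{i=m}^d\frac{a^{(i)}(t)}{i!}C_i+o(1)$, whose dominant term is $\frac{C_m}{m!}a^{(m)}(t)$. Using Lemma~\ref{L:derivative}(i) iteratively (and the routine Hardy-field comparison that $h\prec t^p$ forces $h'\prec t^{p-1}$), I would pin down its growth: if $t^d\prec a(t)\prec t^{d+1}$ then $a^{(m)}(t)\sim a(t)/t^m$, giving $t^{d-m}\prec b(t)\prec t^{d-m+1}$ with $d-m\ge 1$; if $a(t)=\alpha t^d+\tilde{a}(t)$ then $a^{(m)}(t)=\alpha\frac{d!}{(d-m)!}t^{d-m}+\tilde{a}^{(m)}(t)$ with $\tilde{a}^{(m)}(t)\prec t^{d-m}$, so $b(t)=\beta t^{d-m}+\rho(t)$ with $\rho\prec t^{d-m}$ and $\beta:=\frac{C_m}{m!}\alpha\frac{d!}{(d-m)!}$ irrational. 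In either subcase I claim $\log t\prec|b(t)-p(t)|$ for every $p\in\Q[t]$: in the first, $b$ is strictly sandwiched between the consecutive powers $t^{d-m}$ and $t^{d-m+1}$, so a polynomial of degree $\le d-m$ is $\prec b$ while one of degree $\ge d-m+1$ dominates $b$, and in both cases $\log t\prec|b-p|$ since $d-m\ge 1$; in the second, the irrational $t^{d-m}$-coefficient of $b$ cannot be cancelled by the rational coefficients of $p$, again forcing $\log t\prec|b-p|$. By Boshernitzan's theorem (Theorem~\ref{T:Boshernitzan}) the sequence $(b(n))$ is therefore equidistributed $\bmod 1$, whence $\E_{n\in[N]}e(b(n))\to 0$ and, passing to the subsequence $\bM$, $\E_{n\in\bM}e(b(n))=0$, the claimed value.

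The main obstacle is the growth bookkeeping underlying the second case: correctly determining the growth of $a^{(m)}$ and verifying that $b$ stays $\log t$-separated from every rational polynomial, so that Boshernitzan's criterion applies. Controlling the Lagrange remainder and the iterated derivative comparisons via Lemma~\ref{L:derivative} is the delicate point, in particular keeping the non-polynomial-power subcase ($t^d\prec a\prec t^{d+1}$) and the irrational-leading-coefficient subcase ($a=\alpha t^d+\tilde{a}$) carefully apart.
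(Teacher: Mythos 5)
Your proposal is correct and follows essentially the same route as the paper: linearize the product to $e(b(n))$ with $b(t)=\sum_j k_j a(t+n_j)$, Taylor-expand to order $d$ using $a^{(d+1)}(t)\to 0$ to absorb the remainder, read off the limit via the definition of $\lambda$ when the lower coefficients vanish, and otherwise identify the dominant derivative term and apply Boshernitzan's criterion (Theorem~\ref{T:Boshernitzan}) to conclude equidistribution and a zero limit. The only difference is that you spell out the verification of the $\log t$-separation from rational polynomials in the two subcases, which the paper leaves implicit.
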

\begin{proof}
	By our assumptions and Lemma~\ref{L:derivative}
	we have that $\lim_{t\to+\infty}a^{(d+1)}(t)=0$. Using this and Taylor expansion, we get that
	for every $h\in \Z$ we have
	$$
	a(t+h)=\sum_{i=0}^da^{(i)}(t) \frac{h^i}{i!}+\epsilon_h(t), \quad t\in \R_+,
	$$
	where the function $\epsilon_h\colon \R_+\to \R$ satisfies $\lim_{t\to +\infty}\epsilon_h(t)=0$.
	So if  $$
	A(t):=\sum_{j=1}^s k_j a(t+n_j), \quad  t\in \R_+,
	$$
	we have that
	$$
	 \prod_{j=1}^s e(k_ja(n+n_j))= e(A(n)), \quad n\in\N,
	$$
	and
	$$
	A(t)=\sum_{i=0}^d c_i a^{(i)}(t)+\epsilon(t), \quad t\in \R_+,
	$$
(note that $A(t)$ is again a Hardy field function)	where
	$$
	c_i:= \frac{1}{i!}\sum_{j=1}^sk_j  n_j^i, \quad  i=0,\ldots, d,  \qquad \epsilon(t):=\sum_{j=1}^sk_j\epsilon_{n_j}(t), \quad t\in \R_+.
	$$
	Note that $\lim_{t\to +\infty}\epsilon(t)=0$.  Recall that  $c:=a^{(d)}/d!$. If $c_0=\cdots=c_{d-1}=0$,  then $\lim_{t\to +\infty}(A(t)-l_d\, c(t))=0$, where $l_d:=\sum_{j=1}^sk_j  n_j^d$. Hence,
	$$
	\E_{n\in\bM}\, e(A(n))=\E_{n\in\bM}\, e(l_d\, c(n))=\int e(l_d\, t)\, d\lambda(t).
	$$
	 Otherwise, let $i_0$ be the smallest $i\in \{0,\ldots, d-1\}$ such that $c_i\neq 0$. Using Lemma~\ref{L:derivative}  and our assumptions on $a(t)$, we deduce that either  $t^{d-i_0}\prec A(t)\prec t^{d-i_0+1}$ or  $|A(t)-t^{d-i_0}\beta| \prec t^{d-i_0}$
	 where $\beta:=i_0!\alpha$ is irrational. 
 Since $d-i_0\geq 1$, in both cases we have by Theorem~\ref{T:Boshernitzan} that  $\E_{n\in\N}\, e(A(n))=0$.
	This completes the proof.
\end{proof}

In the statement below we use the convention $0^0=1$.
\begin{lemma}\label{L:34}
	Let $a(t)$,   $c(t)$, $\bM$,  $\lambda$, $d$,  and $S_d$
	be as in the statement of Proposition~\ref{P:fractional2}.
Let also $f\colon \T^{d+1}\to \S^1$ be defined by $f(y):=e(y_d)$ for $y=(y_0,\ldots, y_d)\in \T^{d+1}$. Then  for every $s\in \N$ and $n_1,\ldots, n_s\in \Z$, $k_1,\ldots,k_s\in \{-1,+1\}$, we have that  the integral
	\begin{equation}\label{E:integral}
	\int \prod_{j=1}^s S_d^{n_j}f^{k_j} \, d(\lambda \times m_{\T^d})
	\end{equation}
  is equal to $\int e(l_d\, t)\, d\lambda(t)$ if $\sum_{j=1}^s k_jn_j^i=0$ for $i=0,\ldots, d-1$,
  where $l_d:=\sum_{j=1}^sk_j   n_j^d$, and is equal to $0$ otherwise.
	\end{lemma}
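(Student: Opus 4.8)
The plan is to reduce both quantities to explicit character integrals on $\T^{d+1}$, exploiting that $S_d$ is unipotent so that its powers are governed by binomial coefficients. First I would compute the iterates of $S_d$ explicitly: writing $S_d=\id+N$, where $N$ is the nilpotent shift $N(y_0,\ldots,y_d):=(0,y_0,\ldots,y_{d-1})$, the binomial theorem gives $S_d^n=\sum_{k=0}^d\binom{n}{k}N^k$, so that the top coordinate of $S_d^ny$ equals $\sum_{k=0}^d\binom{n}{k}y_{d-k}$. Hence $f(S_d^{n_j}y)=e\big(\sum_{k=0}^d\binom{n_j}{k}y_{d-k}\big)$, and after raising to the power $k_j\in\{-1,1\}$ and multiplying over $j$, the integrand in \eqref{E:integral} collapses to the single character $e\big(\sum_{i=0}^d B_i\,y_{d-i}\big)$, where $B_i:=\sum_{j=1}^s k_j\binom{n_j}{i}$ for $i=0,\ldots,d$ (each $B_i\in\Z$, since $\binom{n}{i}$ is integer valued for $n\in\Z$).

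Next I would integrate this character against $\lambda\times m_{\T^d}$, with $\lambda$ sitting on the coordinate $y_0$ and $m_{\T^d}$ the Haar measure on $(y_1,\ldots,y_d)$. Since the character factors over the coordinates, orthogonality of characters makes the integral over each Haar coordinate $y_1,\ldots,y_d$ vanish unless the corresponding frequency is $0$; that is, the whole integral vanishes unless $B_i=0$ for every $i=0,\ldots,d-1$, in which case it reduces to $\int_\T e(B_d\,y_0)\,d\lambda(y_0)=\int e(B_d\,t)\,d\lambda(t)$.

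It remains to match this with the correlation computed in Lemma~\ref{L:33}. For the conditions, I would use that $\binom{t}{0},\binom{t}{1},\ldots,\binom{t}{d-1}$ and $1,t,\ldots,t^{d-1}$ are two bases of the space of polynomials of degree $<d$, related by a unitriangular (hence invertible) change of basis; therefore the linear functional $p\mapsto\sum_{j=1}^sk_j\,p(n_j)$ annihilates one basis precisely when it annihilates the other, which is exactly the asserted equivalence between $B_i=0$ for $i<d$ and $\sum_j k_j n_j^i=0$ for $i<d$. Under these conditions only the leading term of $\binom{t}{d}$ contributes to $B_d=\sum_j k_j\binom{n_j}{d}$, and since $\binom{t}{d}$ has leading coefficient $1/d!$ one reads off $B_d=\frac1{d!}\sum_j k_j n_j^d$; this is exactly the coefficient multiplying $a^{(d)}$ in the Taylor reduction of Lemma~\ref{L:33}, so that, recalling the normalization $c=a^{(d)}/d!$ in the definition of $\lambda$, the integral matches the value $\int e(l_d\,t)\,d\lambda(t)$ found there, and equals $0$ otherwise.

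The main obstacle I anticipate is precisely this last bookkeeping: identifying the binomial frequency $B_d$ produced by the unipotent dynamics with the power-sum frequency $l_d$ produced by the Taylor expansion of Lemma~\ref{L:33}, and keeping the $d!$ normalizations consistent between the two computations. The iterate formula and the character orthogonality are routine; the substance lies in the unitriangular change of basis, which simultaneously identifies the two systems of vanishing conditions and pins down the surviving leading coefficient.
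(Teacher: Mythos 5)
Your route is the same as the paper's: write $S_d=\id+N$ with $N$ nilpotent, get $(S_d^ny)_d=\sum_{k=0}^d\binom{n}{k}y_{d-k}$, collapse the integrand to the single character $e\big(\sum_{i=0}^dB_iy_{d-i}\big)$ with $B_i=\sum_{j}k_j\binom{n_j}{i}$, kill the Haar coordinates $y_1,\ldots,y_d$ by orthogonality, and translate the vanishing conditions $B_0=\cdots=B_{d-1}=0$ into $\sum_jk_jn_j^i=0$ for $i<d$ via the unitriangular change of basis between $\binom{t}{0},\ldots,\binom{t}{d-1}$ and $1,t,\ldots,t^{d-1}$. All of this is correct and matches the paper step for step, and your computation of the surviving frequency, $B_d=\frac{1}{d!}\sum_jk_jn_j^d=l_d/d!$, is the right one (only the leading coefficient $1/d!$ of $\binom{t}{d}$ survives under the constraints).

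The gap is the final sentence. Having found $B_d=l_d/d!$, the integral you obtain is $\int e\big((l_d/d!)\,t\big)\,d\lambda(t)$, whereas the lemma asserts $\int e(l_d\,t)\,d\lambda(t)$; declaring that these "match" after "recalling the normalization $c=a^{(d)}/d!$" is a non sequitur, because $\lambda$ is one fixed measure on $\T$ and $\int e(mt)\,d\lambda(t)$, $\int e(d!\,mt)\,d\lambda(t)$ are different numbers in general. Concretely, for $d=2$ take the second-difference pattern $(n_j)=(0,1,1,2)$, $(k_j)=(1,-1,-1,1)$: the constraints hold, $B_2=\binom{2}{2}=1$ while $l_2=2$, and if $\lambda=\delta_\beta$ with $\beta$ irrational the two candidate values $e(\beta)$ and $e(2\beta)$ differ. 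So no bookkeeping identifies $\widehat{\lambda}(l_d/d!)$ with $\widehat{\lambda}(l_d)$. For what it is worth, your frequency computation is more careful than the paper's own proof, which asserts $c_d=\sum_jk_jn_j^d$ where the correct value is $\frac{1}{d!}\sum_jk_jn_j^d$; the discrepancy is invisible when $d\le 1$ or $\lambda=m_\T$, but for $d\ge 2$ with $\lambda$ a point mass or a nontrivial absolutely continuous measure the displayed identity only becomes correct after replacing the first marginal $\lambda$ by its pushforward under $t\mapsto d!\,t$ (equivalently, putting the limit distribution of $a^{(d)}(n)$ rather than of $a^{(d)}(n)/d!$ on the $y_0$ coordinate) -- a renormalization that does not affect the qualitative conclusions drawn from the lemma. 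As written, though, your last step does not establish the stated formula, and that is exactly where the substance of the matching with Lemma~\ref{L:33} lies.
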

\begin{proof}
For $y=(y_0,\ldots, y_d)\in \T^{d+1}$ direct computation gives that
	$$
		f(S_d^ny)= e\Big(\sum_{i=0}^d\binom{n}{i} y_{d-i}\Big), \qquad n\in\N.
	$$
	Hence, for $y\in \T^{d+1}$ we have
$$
\prod_{j=1}^s f^{k_j}(S_d^{n_j}y)=e\Big(\sum_{i=0}^d c_i y_{d-i}\Big)
$$
where
	$$
c_i:=\sum_{j=1}^s k_j \binom{n_j}{i}, \quad i=0,\ldots, d.
	$$
	
It follows that  the integral in \eqref{E:integral}  is
	 equal to  $\int e(c_d y_0)\, d\lambda(y)$  if $c_i=0$ for $i=0,\ldots, d-1$,
	 and is equal to  $0$ otherwise. 	
	 Lastly, one easily verifies  that $c_i=0$ for $i=0,\ldots, d-1$ if and only if
	 $\sum_{j=1}^s k_jn_j^i=0$ for $i=0,\ldots, d-1$, which implies that  $c_d=\sum_{j=1}^sk_jn_j^d=l_d$.
	 	This completes the proof.
	\end{proof}
Combining the previous two results we can now prove Proposition~\ref{P:fractional2}.
\begin{proof}[Proof of Proposition~\ref{P:fractional2}]	
	By the first part of the statement of Lemma~\ref{L:33} we get that the sequence $b$ admits correlations on $\bM$. Let $(X,\mu,T)$ be the F-system of $b$ on $\bM$
	where, as usual,  $X=(\S^1)^{\Z}$,
	$T$ is the shift transformation on $X$, and $\mu:=\lim_{k\to\infty}\E_{m\in [M_k]}\, \delta_{T^mb}$. It remains to establish the asserted isomorphism. To this end, we define  the map $\Phi\colon \T^{d+1}\to X$  by
	$$
	\Phi(y):=(f(S_d^ny))_{n\in \Z}, \qquad y\in \T^{d+1},
	$$
	where $y=(y_0,\ldots, y_d)$ and $f(y):=e(y_d)$, $y\in \T^{d+1}$. We clearly have $\Phi\circ S_d=T\circ \Phi$. Moreoever, it is easy to check that the map $\Phi$ is injective. It remains to verify that $\mu=(\lambda\times m_{\T^d})\circ\Phi^{-1}$. Let $F_0(x):=x(0 )$, $x\in X$. For every $s\in \N$ and $n_1,\ldots, n_s\in \Z$,  $k_1,\ldots, k_s\in \{-1,+1\}$,
	we have
	$$
	\int \prod_{j=1}^sT^{n_j}F^{k_j}_{0}\, d\mu=\E_{n\in\bM}  \prod_{j=1}^se(k_ja(n+n_j))=\int \prod_{j=1}^s S_d^{n_j}f^{k_j} \, d(\lambda\times m_{\T^{d+1}}),
	$$
	where the first identity follows from \eqref{E:correspondence} and the second identity follows by combining Lemma~\ref{L:33} and Lemma~\ref{L:34}. Using this and the fact that  $f=F_0\circ \Phi$, we get that a linearly dense subset of $C(X)$ has the same integral with respect to the  measures $\mu$ and $(\lambda \times m_{\T^d})\circ\Phi^{-1}$, hence, the two measures  coincide. This completes the proof.
\end{proof}

\subsection{Proof of Corollaries~\ref{C:disjoint}, \ref{C:13}, \ref{C:13'}}

 We start with the proof of Corollary~\ref{C:disjoint}, which is a consequence of
 the structural result of Theorem~\ref{T:HardySequence} and an ergodic theorem from
 \cite{L0}.

 \begin{proof}[Proof of Corollary~\ref{C:disjoint}]
Let $(X,\mu,T)$ be an F-system of $b$. 	By Parts~$(i)$ and $(ii)$  of Theorem~\ref{T:HardySequence} the system $(X,\mu,T)$ is isomorphic to the system
${\bf X}_d:=(\T^{d+1}, \nu:=\lambda\times m_{\T^{d}}, S_d)$ where $\lambda$ is a continuous probability measure on $\T$ and  $S_d$ is the unipotent homomorphism of $\T^{d+1}$  defined by
$$
S_d(y_0,\ldots, y_d):=(y_0,y_1+y_0,\ldots, y_d+y_{d-1}), \quad y_0,\ldots, y_d\in \T.
$$
Hence,  it remains to show that ${\bf X}_d$  is disjoint from every ergodic system  $(Z,\rho, R)$.
So let $\sigma$ be a joining of these two systems. In order to show that
$\sigma=\nu\times \rho$ it suffices to show that for  every $f\in C(\T^{d+1})$
with $\int f \, d\nu=0$ and every $g\in L^\infty(\rho)$ we have
$$
\int f(y) \, g(z) \, d\sigma(y,z)= 0.
$$

Since $\sigma$ is $(T\times R)$-invariant it suffices to show that
\begin{equation}\label{E:yz}
\lim_{N\to\infty}\E_{n\in[N]}\int f(S_d^ny) \, g(R^nz) \, d\sigma(y,z)= 0.
\end{equation}
Using uniform approximation of  $f$ by trigonometric polynomials we can assume that
$f$ is a complex exponential of $\T^{d+1}$.

Let $y:=(y_0,\ldots, y_d)$ and suppose first that
 $f(y)=e(ky_0)$ for some $k\in \Z$.
Then $f(S_d^ny)=e(ky_0)$ for every $n\in\N$ and using the ergodicity of the system $(Z,\rho, R)$ we get that the limit in \eqref{E:yz} is equal to $\int f\, d\mu \cdot \int g\, d\nu=0$. So we can assume that
 there exists  $d'\in \{1,\ldots, d\}$ such  that
$f(y)=e(\sum_{k=0}^{d'} l_ky_k)$ for some $l_0,\ldots, l_{d'}\in \Z$ with
  $l_{d'}\neq 0$.
 In this case,  a simple computation gives that
 $$
 f(S_d^ny)=e\big(q y_0n^{d'}+p_y(n)\big), \qquad y\in \T^{d+1},
 $$
where $q:=l_{d'}/d'!$ and for every $y\in \T^{d+1}$ we have that $p_y$ is a polynomial with real coefficients and degree strictly smaller than $d'$.

Using again the ergodicity of the system $(Z,\rho,R)$ and  \cite[Theorem~4]{L0},
we get that there exists a subset $Z'$ of $Z$ with $\rho(Z')=1$
such that the following holds: For
every $\alpha\in \R$ such that $e(k\alpha)$  is   not an eigenvalue of $(Z,\rho,R)$    for every non-zero $k\in \Z$, we have
 $$
 \lim_{N\to\infty }\E_{n\in [N]}\, e(\alpha n^{d'}+p(n))\, g(R^nz)=0
 $$
  for every polynomial $p$ of degree smaller than $d'$ and every $z\in Z'$.
Let $Y'$ be the set of all $y=(y_0,\ldots, y_d)\in \T^{d+1}$ such that
$e(ky_0)$ is  not an eigenvalue of $(Z,\rho,R)$ for every non-zero $k\in \Z$.
Obviously the projection of  $Y'$ on the $y_0$ coordinate differs from $\T$ on a countable set. Since   $\nu=\lambda\times m_{\T^d}$ and the measure $\lambda$  is continuous, we have that $\nu(Y')=1$. From the above we get that for all
$(y,z)\in Y'\times Z'$ we have
$$
\lim_{N\to\infty}\E_{n\in[N]} f(S_d^ny) \, g(R^nz) = 0.
$$
Since $\rho(Y'\times Z')=1$, the bounded convergence theorem implies that \eqref{E:yz} holds.
This  completes the proof.
 	\end{proof}

Corollary~\ref{C:13} is a consequence of  Corollary~\ref{C:disjoint}
and some pretty standard maneuvers that enables us  to pass orthogonality statements
from the sequences $(e(a(n)t))$, $t\in\R\setminus\{0\}$, to the sequence
$(e([a(n)]\alpha))$ where $\alpha \in \R\setminus \Z$.

\begin{proof}[Proof of Corollary~\ref{C:13}]
	Suppose first that $b(n)=e(a(n))$, $n\in\N$.
	Arguing by contradiction,  suppose that \eqref{E:w} fails for some ergodic sequence $w\colon \N\to \U$. Then there exists a sequence of intervals $\bM=([M_k])_{k\in\N}$, with $M_k\to \infty$, such that the
	sequences $b,w$ admit joint correlations on $\bM$ and
	\begin{equation}\label{E:neq0}
	\E_{n\in \bM}\,  b(n) \, w(n) \neq 0.
	\end{equation}
	By Corollary~\ref{C:disjoint} the F-systems of $b,w$ on $\bM$ are disjoint, hence their joint F-system
	(which is a joining of the two  systems) is the direct product of these systems. This easily implies that
	$$
	\E_{n\in \bM}\,  b(n) \, w(n)=\E_{n\in \bM}\,  b(n) \cdot \E_{n\in \bM}\,   w(n)=0,
	$$
	where the last equality holds since $\E_{n\in\N}\, e(a(n))=0$ by Theorem~\ref{T:Boshernitzan}. This contradicts \eqref{E:neq0} and completes the proof in the case where $b(n)=e(a(n))$, $n\in\N$.
	
	Suppose now that $b(n)=e([a(n)]\alpha)$, $n\in\N$,  for some $\alpha\in \R\setminus \Z$. First note that
	$b(n)= e(a(n)\alpha) \phi(a(n))$, $n\in \N$,  where $\phi\colon \T\to \S^1$ is given by $\phi(t):=e(-\{t\}\alpha)$, $t\in \T$, is Riemann-integrable with respect to the measure $m_\T$. Hence, it suffices to show that  under our assumptions on the sequences $a,w$, for every  $\alpha\in \R\setminus \Z$ we have that
	\begin{equation}\label{E:aw0}
	\E_{n\in \N}\,  e(a(n)\alpha)\,  \phi(a(n)) \, w(n) =0
	\end{equation}
	for every $\phi\colon \T\to \C$ that is Riemann-integrable (with respect to $m_\T$).
	Suppose first that $\phi(t):=e(kt)$, $t\in \T$,  for some $k\in \Z$. Then
	$e(a(n)\alpha)\,  \phi(a(n))= e(a(n)(\alpha+k))$,  and since by assumption $\alpha+k\neq 0$, we get by the previous case (for $a(n)(\alpha+k)$ in place of $a(n)$) that
	\eqref{E:aw0} holds. Using linearity and uniform approximation by trigonometric polynomials, we deduce that \eqref{E:aw0} also holds when $\phi\in C(\T)$. Finally, let $\varepsilon>0$ and $\phi_\varepsilon\in C(\T)$ be such that $\norm{\phi-\phi_\varepsilon}_{L^1(m_\T)}\leq \varepsilon$. Then using that \eqref{E:aw0} holds for $\phi_\varepsilon$ in place of $\phi$ we get that
	\begin{multline*}
	\limsup_{N\to\infty}|\E_{n\in [N]}\,  e(a(n)\alpha)\,  \phi(a(n)) \, w(n)| \ll  \\
	\limsup_{N\to\infty} \E_{n\in [N]}\, |\phi-\phi_\varepsilon|(a(n)) = \norm{\phi-\phi_\varepsilon}_{L^1(m_\T)}\leq \varepsilon,
	\end{multline*}
	where to justify the last identity we used that the sequence $(a(n))$ is equidistributed on $\T$ by Theorem~\ref{T:Boshernitzan}.
	Since $\varepsilon$ is arbitrary we get that \eqref{E:aw0} holds, completing the proof.
\end{proof}
 Corollary~\ref{C:13'} is a simple consequence of Corollary~\ref{C:13}.
\begin{proof}[Proof of Corollary~\ref{C:13'}]
	If $E$ is the range of the sequence $b$, then for every $k\in \Z$ one easily verifies that
	$$
	d(E)\cdot \E_{n\in\N}\, e(ka(b(n)))=\E_{n\in\N}\big(e(ka(n))\cdot {\bf 1}_E(n)\big).
	$$
	By assumption the sequence ${\bf 1}_E(n)$ is ergodic, hence if $k\neq 0$ we have that Corollary~\ref{C:13} applies and gives that  the last average is $0$. Since $d(E)>0$ we deduce that
	$\E_{n\in\N}\, e(ka(b(n)))=0$ for every non-zero $k\in \Z$. Hence,  the sequence $\big((a\circ b)(n)\big)_{n\in\N}$ is equidistributed on $\T$.
	
	Similarly, one verifies that for every $k\in\Z$ and $\alpha\in \R$ such that $k\alpha\not\in \Z$ we have
	$$
	\E_{n\in\N}\, e(k[a(b(n))]\alpha)=0.
	$$
	This implies the other two equidistribution properties and completes the proof.
\end{proof}

\subsection{Proof of Theorem~\ref{T:HardySequence'}}
The proof of Theorem~\ref{T:HardySequence'} will be based on the next result:
\begin{proposition}\label{P:fractional3}
	Let  $a\colon \R_+\to \R$ be  a Hardy field function such that  $t^d\log{t} \prec a(t)\prec t^{d+1}$ for some $d\in \Z_+$. Then for every $\alpha\in \R\setminus \Q$ the pair of sequences
	 $(a(n)), (a(n) \alpha)$ (with elements on $\T$) has a unique joint F-system that  is  isomorphic to the system $(\T^{2(d+1)},m_{\T^{2(d+1)}}, S_d\times S_d)$ where
	$S_d$ is given by \eqref{E:unipotent}.
\end{proposition}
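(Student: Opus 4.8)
The plan is to mirror the proof of Proposition~\ref{P:fractional2}, working at the level of correlations. First, since $t\mapsto e(t)$ is a homeomorphism of $\T$ onto $\S^1$, Proposition~\ref{P:ImageRiemann} (applied in each coordinate) reduces the claim to the pair of $\S^1$-valued sequences $a_1(n):=e(a(n))$ and $a_2(n):=e(a(n)\alpha)$. By density of trigonometric polynomials in $C(\S^1)$, a general joint correlation of $a_1,a_2$ on $\bM$ is a limit of expressions of the form
\[
\E_{n\in\bM}\ \prod_{j=1}^{s} e\big(k_j\,a(n+n_j)\big)\ \prod_{j=1}^{s'} e\big(l_j\,\alpha\,a(n+m_j)\big)=\E_{n\in\bM}\, e(B(n)),
\]
where $B(t):=\sum_{j=1}^s k_j\,a(t+n_j)+\alpha\sum_{j=1}^{s'} l_j\,a(t+m_j)$ is again a Hardy field function. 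Exactly as in Lemma~\ref{L:33}, Taylor expansion together with $\lim_{t\to+\infty}a^{(d+1)}(t)=0$ (Lemma~\ref{L:derivative}) gives $B(t)=\sum_{i=0}^d (c_i+\alpha c_i')\,a^{(i)}(t)+\epsilon(t)$ with $\epsilon(t)\to 0$, where $c_i:=\frac{1}{i!}\sum_j k_j n_j^i\in\Q$ and $c_i':=\frac{1}{i!}\sum_j l_j m_j^i\in\Q$.

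The crucial point, and the place where $\alpha\in\R\setminus\Q$ enters, is that $c_i+\alpha c_i'=0$ forces $c_i=c_i'=0$. I would analyze $B$ through its lowest-order non-vanishing coefficient. If $(c_i,c_i')\neq(0,0)$ for some $i\le d-1$, let $i_0$ be the least such index; then the leading behaviour of $B$ is governed by $(c_{i_0}+\alpha c_{i_0}')\,a^{(i_0)}(t)$, and since $t^{d-i_0}\log t\prec a^{(i_0)}(t)\prec t^{d-i_0+1}$ with $d-i_0\ge 1$ (Lemma~\ref{L:derivative}), the function $B$ stays super-logarithmically far from every rational polynomial, so Theorem~\ref{T:Boshernitzan} yields $\E_{n}\,e(B(n))=0$. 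If instead $c_i=c_i'=0$ for all $i\le d-1$, then $B(t)=d!(c_d+\alpha c_d')\,c(t)+\epsilon(t)$ with $c(t):=a^{(d)}(t)/d!$, and writing $L:=\sum_j k_j n_j^d$, $L':=\sum_j l_j m_j^d$ we reduce to $\E_n\,e\big((L+\alpha L')c(n)\big)$. Here I would establish the key input: the sequence $(c(n),\alpha c(n))$ is equidistributed in $\T^2$. Indeed, for any $(k,l)\in\Z^2\setminus\{0\}$ the function $(k+\alpha l)c(t)$ is Hardy with $\log t\prec c(t)\prec t$ and $k+\alpha l\neq 0$, hence super-logarithmically far from rational polynomials, so Theorem~\ref{T:Boshernitzan} gives $\E_n\,e((k+\alpha l)c(n))=0$, and Weyl's criterion yields equidistribution in $\T^2$. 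Consequently $\E_n\,e\big((L+\alpha L')c(n)\big)$ equals $1$ if $(L,L')=(0,0)$ and $0$ otherwise. Collecting the cases, the joint correlation equals $1$ precisely when $\sum_j k_j n_j^i=0$ and $\sum_j l_j m_j^i=0$ for all $i=0,\dots,d$, and equals $0$ otherwise; in particular all these limits exist along the full sequence $([M])_{M\in\N}$, which will give uniqueness of the joint F-system.

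It remains to match these correlations with the model $(\T^{2(d+1)},m_{\T^{2(d+1)}},S_d\times S_d)$ and to produce the isomorphism. I would take $f_1(y,w):=e(y_d)$ and $f_2(y,w):=e(w_d)$ for $(y,w)\in\T^{d+1}\times\T^{d+1}$ as the functions representing $a_1,a_2$. Since $f_1$ depends only on $y$, $f_2$ only on $w$, and $m_{\T^{2(d+1)}}=m_{\T^{d+1}}\times m_{\T^{d+1}}$ is a product, a joint correlation of $f_1,f_2$ under $S_d\times S_d$ factors as a product of two integrals of the type computed in Lemma~\ref{L:34} with $\lambda=m_\T$; each such integral is $1$ exactly when the corresponding vanishing conditions $\sum_j k_j n_j^i=0$ (resp.\ $\sum_j l_j m_j^i=0$) hold for all $i=0,\dots,d$, and $0$ otherwise, which is the same answer as above. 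Finally, defining $\Phi\colon\T^{2(d+1)}\to(\S^1\times\S^1)^\Z$ by $\Phi(y,w):=\big(f_1((S_d\times S_d)^n(y,w)),\,f_2((S_d\times S_d)^n(y,w))\big)_{n\in\Z}$, one checks as in Proposition~\ref{P:fractional2} that $\Phi$ intertwines $S_d\times S_d$ with the shift, is injective, and pushes $m_{\T^{2(d+1)}}$ to the measure of the joint F-system, because the two systems then have identical correlations on a linearly dense subset of $C\big((\S^1\times\S^1)^\Z\big)$. The main obstacle is the middle step: one must rule out that the sequences $(a(n))$ and $(a(n)\alpha)$ are statistically entangled, i.e.\ that the joint system is a proper factor of the product. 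This is exactly what the joint equidistribution of $(c(n),\alpha c(n))$ in $\T^2$ secures, and it is there that the irrationality of $\alpha$ --- through the implication $c_i+\alpha c_i'=0\Rightarrow c_i=c_i'=0$ --- is indispensable.
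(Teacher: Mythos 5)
Your proposal is correct and follows essentially the same route as the paper: reduce via Proposition~\ref{P:ImageRiemann} to the exponential sequences, Taylor-expand the combined phase $B(t)=\sum_{i}(c_i+\alpha c_i')a^{(i)}(t)+\epsilon(t)$, use the irrationality of $\alpha$ to force $c_i+\alpha c_i'\neq 0$ unless $c_i=c_i'=0$, and finish with Theorem~\ref{T:Boshernitzan}. The only cosmetic difference is in the endgame: the paper concludes by showing the joint measure equals the product of the two marginal F-measures (and then invokes the already-established identification of each marginal), whereas you match the correlations directly against the model $(\T^{2(d+1)},m_{\T^{2(d+1)}},S_d\times S_d)$ and rebuild the conjugacy $\Phi$ as in Proposition~\ref{P:fractional2}; both are equivalent.
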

\begin{proof}
	By Proposition~\ref{P:ImageRiemann} the F-system of the sequences $(a(n))$ (with elements on $\T$) and $(e(a(n)))$ are isomorphic. Moreover, a similar argument gives that a joint F-system on $\bM$  of the
	pair of sequences $(a(n)), (a(n)\alpha)$ and the corresponding one for the pair of sequences $(e(a(n))), (e(a(n)\alpha))$ are isomorphic. Hence, it suffices to establish the asserted statement with the sequences
	$(e(a(n)))$ and  $(e(a(n)\alpha))$  in place of the sequences $(a(n))$ and  $(a(n)\alpha)$ respectively.
	
	Let $(X,\mu,T)$ be the F-system of the sequence  $(e(a(n)))$ and $(X\times X,\nu,S) $, where $S=T\times T$,  be a joint  F-system of
	the pair of sequences $(e(a(n))), (e(a(n) \alpha))$ on $\bM$.
	It suffices to show that
	$\nu=\mu\times \mu$.
	For $i=1,2$ let $F_{i,0}\in C(X\times X)$ be defined by $F_{i,0}(x_1,x_2):=x_i(0)$, where $(x_1,x_2)\in X\times X$.  Since the collection of functions of the form   $\prod_{i=1}^2 \prod_{j=1}^sS^{n_{i,j}}F_{i,0}^{k_{i,j}}$, where  $n_{i,j}\in \Z$, $k_{i,j}\in \{-1,+1\}$, for  $i\in \{1,2\}$, $j\in \{1,\ldots, s\}$, $s\in \N$,   is linearly dense in $C(X\times X)$,  it suffices to show that
	$$
	\int \prod_{i=1}^2 \prod_{j=1}^sx_i^{k_{i,j}}(n_{i,j}) \, d\nu=
	\prod_{i=1}^2 \int \prod_{j=1}^sx_i^{k_{i,j}}(n_{i,j}) \, d\mu
	$$
	for  every $s\in \N$ and  $n_{i,j}\in \Z$, $k_{i,j}\in \{-1,+1\}$,  $i\in \{1,2\}$, $j\in\{1,\ldots, s\}$.

	For notational  convenience, we let $a_1:=a$ and $a_2:=\alpha \cdot a$. Using the definition of the measures $\mu$ and $\nu$  we get that  it suffices to show that for every 	  $s\in \N$ and  $n_{i,j}\in \Z$, $k_{i,j}\in \{-1,+1\}$,  where  $i\in \{1,2\}$, $j\in\{1,\ldots, s\}$,
	we have the identity
	\begin{equation}\label{E:identityij}
	\E_{n\in\bM}  \prod_{i=1}^2\prod_{j=1}^se(k_{i,j}a_i(n+n_{i,j}))=
	\prod_{i=1}^2\Big(\E_{n\in\bM}  \prod_{j=1}^se(k_{i,j}a_i(n+n_{i,j}))\Big).
	\end{equation}
	
	By assumption we have that  $t^{d}\log{t}\prec a(t)\prec t^{d+1}$ for some $d\in \Z_+$.
	By Lemma~\ref{L:33} the right hand side in \eqref{E:identityij} is $1$ if
	\begin{equation}\label{E:iszero}
	\sum_{j=1}^sk_{i,j}n_{i,j}^r=0 \ \text{ for } \  i=1, 2 \ \text{ and } \  r=0,\ldots, d,
	\end{equation}
	and is $0$ otherwise.
	
 Next we deal with the left hand side in \eqref{E:identityij}. Using Taylor expansion and arguing exactly as in the proof of
	Lemma~\ref{L:33} we find that  if  $$
	A(t):=\sum_{i=1}^{2}\sum_{j=1}^s k_{i,j} a_i(t+n_{i,j}), \quad  t\in \R_+,
	$$
	then for some  $e\colon \R_+\to \R$ that satisfies $\lim_{t\to+\infty}e(t)=0$ we have
	$$
	A(t)=\sum_{i=1}^2 \sum_{r=0}^{d} c_{i,r} a_i^{(r)}(t)+\epsilon(t)=\sum_{r=0}^{d} (c_{1,r}+c_{2,r}\alpha) a^{(r)}(t)+\epsilon(t),
	$$
	where for $i=1, 2$ we have
	$$
	c_{i,r}:= \frac{1}{r!}\sum_{j=1}^sk_{i,j}  n_{i,j}^r, \quad r=0,\ldots, d.
	$$
	We deduce that if \eqref{E:iszero} holds,  then $A(t)=\epsilon(t)\to 0$ as $t\to+\infty$. Therefore, we have   $\E_{n\in \N}\, e(A(n))=1$ and the left hand side in \eqref{E:identityij} is $1$.  Suppose now that \eqref{E:iszero} does not hold.  Then
	$\sum_{j=1}^sk_{i,j}n_{i,j}^r\neq 0$ for some $i\in \{1, 2\}$ and $r\in \{0,\ldots, d\}$.
	Let $r_0$
	be the smallest $r\in \{0,\ldots, d\}$ such that $|c_{1,r}|+|c_{2,r}|\neq 0$. Since $c_{1,r},c_{2,r}$ are rational and $\alpha$ is irrational, we have that $c_{1,r}+c_{2,r}\alpha\neq 0$.  Using  Lemma~\ref{L:derivative} we get that  $A\sim a^{(r_0)}$ and  deduce that $A(t)\sim a(t)/t^{r_0}$ for some $r\in \{0,\ldots, d\}$. Combining this with  Theorem~\ref{T:Boshernitzan} we get that
	$\E_{n\in\N}\, e(A(n))=0$. We  deduce that in all cases \eqref{E:identityij} holds.  This completes the proof.
\end{proof}
We can now proceed to the proof of  Theorem~\ref{T:HardySequence'}.
\begin{proof}[Proof of Theorem~\ref{T:HardySequence'}]
	First  we carry out a reduction. Let  $c(n):=[a(n)]\alpha$, $n\in\N$.
	Theorem~\ref{T:Boshernitzan}  gives that for every non-zero $t\in \R$  the sequence $(a(n)t)$ is equidistributed on $\T$,
	and using a standard argument (see for example the proof of \cite[Theorem~6.3]{BKQW05}) we deduce that
	for every irrational $\alpha$  the sequence $(c(n))$ is equidistributed on $\T$. Since $b(n)=\phi(c(n))$, $n\in\N$, and the
	function  $\phi$ is Riemann-integrable (with respect to $m_\T$),   Proposition~\ref{P:ImageRiemann} applies
	and gives that in order to get the asserted properties for the sequence $b$ it suffices to get them for the sequence $c$.

	We thus turn our attention to the sequence $c$.
	Note first that if
	$\psi\colon \T^{2}\to \T$ is defined by
	$$
	\psi(x,y):=y-\{x\}\alpha \mod{1},
	$$
	then $\psi$ is Riemann-integrable with respect to $m_{\T^2}$ (the set of discontinuities of $\psi$ has $m_{\T^2}$-measure $0$) and we have the identity
	$$
	c(n)=\psi(a(n), a(n)\alpha), \quad n\in \N.
	$$
	Next, note that
	by Part~$(i)$ of Theorem~\ref{T:HardySequence}	for every non-zero $\alpha\in \R$ 	the sequences  $(a(n))$ and   $(a(n)\alpha )$ on $\T$ have  unique F-systems, and they are both isomorphic 	to
	the  system $(\T^{d+1}, m_{\T^{d+1}}, S_d)$ where
		$S_d\colon \T^{d+1}\to \T^{d+1}$ is given by \eqref{E:unipotent}.
	By Proposition~\ref{P:fractional3} for every irrational  $\alpha \in \R$	the pair  of sequences $(a(n)), (a(n) \alpha)$ has a unique joint F-system,
	and  it  is isomorphic to the system $(\T^{2(d+1)},m_{\T^{2(d+1)}}, S_d\times S_d)$.
	The needed conclusion for the sequence $c$ now follows from Proposition~\ref{P:ImageRiemann}, assuming that we verify that the sequence
	$(a(n), a(n)\alpha)$ is equidistributed on $\T^2$ with respect
	to the Haar measure $m_{\T^2}$. Since $\alpha$ is irrational,  this easily follows from Theorem~\ref{T:Boshernitzan}, our assumption $t^d\log{t} \prec a(t)\prec t^{d+1}$ for some $d\in \Z_+$, and Weyl's equidistribution theorem. This completes the proof. 	 	
\end{proof}

\section{Proof of results concerning Hardy field iterates}
The proof of Theorem~\ref{T:HardyIterates} is a direct consequence of
the next result, which establishes strong stationarity for the Furstenberg systems defined in Theorem~\ref{T:HardyIterates}, and Theorem~\ref{T:Sst} that describes the structure of strongly stationary systems.
\begin{theorem}\label{T:SstHardyIterates}
Let $a\colon \R_+\to \R$ be a  Hardy field function such that $t^{d+\varepsilon}\prec a(t)\prec t^{d+1}$ for some $\varepsilon>0$. Furthermore, let  $(Y,\nu,S)$ be a  system.
	Then every strictly increasing sequence $(N_k)$ has a subsequence $(N_k')$
 such that for almost every $y\in Y$ and for every $g\in L^\infty(\nu)$ the sequence $(g(S^{[a(n)]}y))$ admits correlations  on $\bN':=([N'_k])_{k\in\N}$ and the corresponding Furstenberg system is strongly stationary.
\end{theorem}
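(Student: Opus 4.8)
The plan is to reduce the statement to a single \emph{dilation-invariant multiple ergodic theorem} for the iterates $[a(n)]$ and then to pass from mean convergence in $L^2(\nu)$ to pointwise convergence in $y$ along a suitable subsequence, closing the argument with the first part of Lemma~\ref{L:sst}. Fix $g\in L^\infty(\nu)$ and write $w(n):=g(S^{[a(n)]}y)$ for the sequence whose Furstenberg system we must analyse. By \eqref{E:correspondence}, the correlations of $w$ on a sequence of intervals are governed by the averages
\[
A_N(y):=\E_{m\in[N]}\Big(\prod_{j=1}^s S^{[a(m+n_j)]}g_j\Big)(y),\qquad g_j\in\{g,\overline g\},
\]
since $\prod_j g_j(S^{[a(m+n_j)]}y)=(\prod_j S^{[a(m+n_j)]}g_j)(y)$. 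By the first part of Lemma~\ref{L:sst}, strong stationarity of the Furstenberg system on $\bN'$ follows once we show that, for almost every $y$, these averages converge along $\bN'$ and their limit is unchanged under the substitution $n_j\mapsto rn_j$ for every $r\in\N$.

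The crux, and the \emph{main obstacle}, is the following multiple ergodic theorem: for all $g_1,\dots,g_s\in L^\infty(\nu)$ and all $n_1,\dots,n_s\in\Z$ the averages $\E_{m\in[N]}\prod_{j=1}^s S^{[a(m+n_j)]}g_j$ converge in $L^2(\nu)$ as $N\to\infty$, and the limit coincides with the one obtained after replacing each $n_j$ by $rn_j$. This is where the growth hypothesis $t^{d+\varepsilon}\prec a\prec t^{d+1}$ is essential. I would prove it by passing to the Host--Kra characteristic factors of $(Y,\nu,S)$, which lets one replace $S$ by an inverse limit of nilsystems and the $g_j$ by functions on a nilmanifold, thereby turning the problem into an equidistribution question for the orbit $m\mapsto([a(m+n_1)],\dots,[a(m+n_s)])$ on that nilmanifold. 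Because $a\succ t^{d+\varepsilon}$, Lemma~\ref{L:derivative} shows that the difference sequences $a(t+h)-a(t)$ grow faster than $\log t$, so the relevant exponential sums equidistribute against Haar measure and the only surviving contributions come from shift data satisfying a homogeneous system of polynomial identities in the $n_j$ (of the shape $\sum_j\pm n_j^i=0$, $i<d$, exactly as isolated in Lemmas~\ref{L:33} and~\ref{L:34}). Such identities are invariant under $n_j\mapsto rn_j$, and the surviving value, being an integral against Haar measure, is $0$ or $1$ and is likewise unaffected by the dilation; this is precisely the mechanism that makes the system of part~(i) of Theorem~\ref{T:HardySequence} (where $\lambda=m_\T$) strongly stationary.

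Granting this mean-convergence statement, the descent to almost every $y$ is soft. Choose a countable, conjugation-closed, $\Q(i)$-linear subset $\mathcal D$ of the unit ball of $L^\infty(\nu)$ that is dense in the $L^2(\nu)$-norm. For each datum $\mathbf d=(s,(n_j),(g_j),r)$ with $g_j\in\mathcal D$, the averages $A^{\mathbf d}_N:=\E_{m\in[N]}\prod_j S^{[a(m+rn_j)]}g_j$ converge in $L^2(\nu)$ to a limit $L^{\mathbf d}$ that does not depend on $r$. There are only countably many such data, and $L^2$-convergence along $N\to\infty$ persists along the given sequence $([N_k])$; hence a diagonal argument yields a single subsequence $(N_k')$ of $(N_k)$ along which $A^{\mathbf d}_{N_k'}\to L^{\mathbf d}$ pointwise $\nu$-almost everywhere, for all $\mathbf d$ at once. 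Intersecting these countably many full-measure sets gives a full-measure set of $y$ at which $\E_{m\in[N_k']}\prod_j g_j(S^{[a(m+rn_j)]}y)\to L^{\mathbf d}(y)$ exists and is independent of $r$, for every tuple with entries in $\mathcal D$. By the first part of Lemma~\ref{L:sst}, the Furstenberg system on $\bN'$ of $(g(S^{[a(n)]}y))$ is strongly stationary for every $g\in\mathcal D$.

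It remains to remove the restriction $g\in\mathcal D$. Given an arbitrary $g\in L^\infty(\nu)$, approximate it in $L^2(\nu)$ by $\tilde g\in\mathcal D$; a telescoping estimate bounds the difference of the corresponding correlations by $C\sum_i\E_{m\in[N_k']}\big|(g-\tilde g)(S^{[a(m+n_i)]}y)\big|$. Adding to the diagonalization the $s=1$ averages $\E_{m\in[N]}S^{[a(m+n)]}\psi$ for $\psi$ ranging over the countable set of products $h_1\overline{h_2}$ with $h_1,h_2\in\mathcal D$ (all covered by the same multiple ergodic theorem), one controls these error terms for almost every $y$ by quantities tending to $0$ with $\|g-\tilde g\|_{L^2(\nu)}$, using that the limit operators preserve integrals. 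A routine approximation then shows that for almost every $y$ the correlations of $(g(S^{[a(n)]}y))$ exist on $\bN'$ and are dilation invariant, so the Furstenberg system is strongly stationary for every $g\in L^\infty(\nu)$. I expect all the genuine difficulty to reside in the multiple ergodic theorem of the second paragraph; the extraction of the subsequence and the approximation step are routine.
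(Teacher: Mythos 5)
Your proposal is correct and follows essentially the same route as the paper: the crux is isolated as the dilation-invariant mean ergodic theorem (this is Proposition~\ref{P:sstformula} in the paper), proved by reduction to nilsystems and nilmanifold equidistribution, followed by a countable-dense-family diagonalization to pass to pointwise convergence along a subsequence, an approximation step to recover all of $L^\infty(\nu)$, and Lemma~\ref{L:sst}. The one place your sketch understates the required input is the reduction to nilsystems: for iterates $[a(n+n_j)]$ the fact that the infinite-step nilfactor is characteristic is itself a deep theorem of Bergelson--Moreira--Richter (Theorem~\ref{T:BMR}), not a consequence of standard Host--Kra theory, and their equidistribution results are stated for weighted averages, so one also needs the conversion to Ces\`aro averages carried out in Lemma~\ref{L:Wr}.
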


The remainder of this section is devoted to the proof of   Theorem~\ref{T:SstHardyIterates}.

\subsection{Strong stationarity of Hardy-field nilsequences}\label{SS:sstnil}
If $G$ is a group  we let
$G_1:=G$ and $G_{j+1}:=[G,G_j]$, $j\in \mathbb{N}$. We say that $G$ is {\em nilpotent} if  $G_{s}$ is the trivial
group for some $s\in \N$.
A {\em  nilmanifold} is a homogeneous space $X=G/\Gamma$, where  $G$ is a  nilpotent Lie group
and $\Gamma$ is a discrete cocompact subgroup of $G$. With $e_X$ we denote  the image in $X$ of the unit element of $G$.
A {\it   nilsystem} is
a system of the form $(X, \CX, m_X, T_b)$,  where $X=G/\Gamma$ is a nilmanifold, $b\in G$,  $T_b\colon X\to X$ is defined by  $T_b(g\cdot e_X) \mathrel{\mathop:}= (bg)\cdot e_X$ for  $g\in G$,  $m_X$  is
the normalized Haar measure on $X$, and  $\CX$  is the completion
of the Borel $\sigma$-algebra of $G/\Gamma$. If $G$ is connected and simply connected and $b\in G$, then $b^t$ is well defined  for every $t\in \R$.

 The first step in the proof of  Theorem~\ref{T:HardyIterates} is  to establish strong stationarity in the case where the system $(Y,\nu,S)$ is a nilsystem. Although in the proof of   Theorem~\ref{T:HardyIterates} we only use Proposition~\ref{P:NilHardyFormula'},  we state and prove the following result that is of independent interest:
\begin{theorem}\label{T:NilHardyIterates}
	Let $a\colon \R_+\to \R$ be a  Hardy field function such that $t^{d+\varepsilon}\prec a(t)\prec t^{d+1}$ for some $d\in \Z_+$ and $\varepsilon>0$. Let $X=G/\Gamma$ be a nilmanifold and $b\in G$. Then for every $x\in X$ the sequences $(b^{a(n)}x)$ and $(b^{[a(n)]}x)$ have unique F-systems that are strongly stationary (in the first case we assume that $G$ is connected and simply connected).
\end{theorem}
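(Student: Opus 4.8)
The plan is to verify, in both cases, the hypotheses of Lemma~\ref{L:sst}(ii) for the $X$-valued sequence $b^{a(n)}x$ (resp. $b^{[a(n)]}x$). Concretely, it suffices to show that for every $s\in\N$, all $n_1,\dots,n_s\in\Z$, and all $f_1,\dots,f_s\in C(X)$, the average
$$
C_r:=\lim_{M\to\infty}\E_{m\in[M]}\,\prod_{j=1}^s f_j\big(b^{a(m+rn_j)}x\big)
$$
converges and is \emph{independent of} $r\in\N$: the convergence (for $r=1$, over the full sequence of intervals $([M])_{M\in\N}$) gives that the F-system is unique, while the $r$-independence gives strong stationarity by Lemma~\ref{L:sst}(ii).

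I first treat the real case, where $G$ is connected and simply connected, so that $t\mapsto b^t$ is a genuine one-parameter subgroup. Since $a(t)\prec t^{d+1}$, Lemma~\ref{L:derivative} gives $a^{(d)}(t)\prec t$ and hence $a^{(d+1)}(t)\to 0$; Taylor's theorem then yields, for each fixed $h$, the expansion $a(m+h)=\sum_{i=0}^{d}\frac{a^{(i)}(m)}{i!}h^{i}+\epsilon_h(m)$ with $\epsilon_h(m)\to 0$. As $\{b^t\}$ is a one-parameter subgroup, $b^{a(m+rn_j)}=\big(\prod_{i=0}^{d}b^{a^{(i)}(m)(rn_j)^i/i!}\big)\,b^{\epsilon_{rn_j}(m)}$ with $b^{\epsilon_{rn_j}(m)}\to e_G$, so by uniform continuity of the $f_j$ we may replace, in the computation of $C_r$, the orbit $b^{a(m+rn_j)}x$ by the polynomial orbit $b^{\,\sum_{i=0}^{d}a^{(i)}(m)(rn_j)^i/i!}\,x$. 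The upshot is that $C_r$ is the limiting average of a nilsequence whose only free data are the Hardy frequencies $a^{(i)}(m)/i!$, $i=0,\dots,d$, each entering the $j$-th coordinate through the coefficient $(rn_j)^i$.

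The central step is to evaluate this average via the equidistribution theory for such sequences on nilmanifolds (the results of \cite{BMR20} together with the Host--Kra theory \cite{HK05}). This produces $C_r$ as the integral of $f_1\otimes\cdots\otimes f_s$ against the Haar measure of a sub-nilmanifold $Z_r\subseteq X^{s}$, and the decisive point is that $Z_r$ does not depend on $r$. This is transparent on the horizontal torus $\bar X=G/([G,G]\Gamma)\cong\T^p$: writing $\pi$ for the projection and $\pi(b^t y)=\pi(y)+t\theta$, a character $(k_1,\dots,k_s)\in(\Z^p)^s$ of $(\T^p)^s$ tests the Weyl average of
$$
\sum_{j=1}^s (k_j\!\cdot\!\theta)\sum_{i=0}^{d}\frac{a^{(i)}(m)}{i!}(rn_j)^i
=\sum_{i=0}^{d}\Big(r^{i}\sum_{j=1}^s (k_j\!\cdot\!\theta)\,n_j^{i}\Big)\frac{a^{(i)}(m)}{i!}.
$$
Because the derivatives $a^{(i)}$ have pairwise distinct growth rates ($a^{(i)}(t)\sim a(t)/t^{i}$ for $i\le d$, by Lemma~\ref{L:derivative}) and each stays far from every polynomial, any nonzero such combination is a Hardy function far from all polynomials, so its Weyl average vanishes by Theorem~\ref{T:Boshernitzan}; the average survives precisely when $r^{i}\sum_j (k_j\!\cdot\!\theta)n_j^{i}=0$ for every $i$, that is (since $r\neq 0$) when $\sum_j (k_j\!\cdot\!\theta)n_j^{i}=0$ for every $i$ --- \emph{a resonance condition independent of $r$}. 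Since these $r$-independent resonances, together with the fixed nilpotent structure of $X^{s}$, determine $Z_r$ and its Haar measure, we get $C_r=C_1$. \textbf{The main obstacle} is precisely transferring this $r$-independence of the horizontal resonances to the genuine equidistribution statement on the (possibly higher-step) nilmanifold $X^{s}$, which is where the machinery of \cite{BMR20} is needed; once the closed form for $C_r$ is in hand, the scaling $a^{(i)}(m)/i!\mapsto r^{i}\,a^{(i)}(m)/i!$ is manifestly a symmetry of the limit.

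Finally, the integer case is handled analogously, now without assuming $G$ connected and simply connected: one works throughout with integer powers of $b$ and uses the equidistribution results of \cite{BMR20} for the integer-part iterates $b^{[a(n)]}x$, together with the fact that $(\{a(n)\})$ equidistributes on $\T$ by Theorem~\ref{T:Boshernitzan}. Tracking the fractional part $\{a(n)\}$ in an auxiliary circle coordinate, one reduces the correlations of $b^{[a(m+rn_j)]}x$ to the same frequencies $a^{(i)}(m)/i!$ and the same $r$-independent resonance conditions, so that again $C_r=C_1$; strong stationarity and uniqueness of the F-system then follow from Lemma~\ref{L:sst}(ii).
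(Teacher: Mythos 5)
Your strategy is essentially the paper's: reduce everything to Lemma~\ref{L:sst}(ii) by showing that the correlations along $n\mapsto n+rn_j$ exist and are independent of $r$, with the existence and $r$-independence ultimately imported from the equidistribution machinery of \cite{BMR20}. Two points of execution differ and are worth noting. First, the paper does not Taylor-expand into derivatives $a^{(i)}$; it uses the exact identity $a(n+kr)=(1+\Delta_r)^k a(n)$ to write the shifted iterates as combinations $\sum_{i=0}^d c_{i,j}\Delta_r^i a$, which is precisely the form treated in \cite{BMR20} (Proposition~\ref{P:NilHardyFormula}), and the $r$-independence of the limit is read off directly from that proof rather than argued via the scaling symmetry of horizontal-torus resonances. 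Your horizontal-torus computation is a correct heuristic for why the answer should not depend on $r$, but on a higher-step nilmanifold the limit measure of such an orbit in $X^s$ is not determined by the abelianization alone, so — as you yourself flag — that step genuinely requires the cited black box; the paper's formulation avoids having to re-derive it. Second, the results of \cite{BMR20} are stated for the weighted averages $\wrE$ with $w_r=|\Delta_r^d a|$, so one more step is needed to pass to Ces\`aro averages; this is Lemma~\ref{L:Wr}, which uses the hypothesis $t^{d+\varepsilon}\prec a(t)\prec t^{d+1}$, and your sketch omits it. Neither difference is a fatal gap, but a complete write-up along your lines would need to supply the weighted-to-Ces\`aro conversion and to cite the $r$-independence as part of what \cite{BMR20} proves rather than deriving it from the horizontal data.
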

\begin{remark}
It can be shown that the symbolic systems of the above  sequences have zero topological entropy. If we combine this with Theorem~\ref{T:Sst},
we get  that the ergodic components of the corresponding F-systems are infinite-step nilsystems (we expect but does not follow from our arguments that they are finite-step nilsystems). It would be interesting to verify  that a similar property holds for all Hardy field functions $a(t)$ with at most polynomial growth.
\end{remark}
The proof of Theorem~\ref{T:NilHardyIterates} will be based on an equidistribution result
from \cite{BMR20} that was proved for certain weighted averages that we define next.
For $r\in \N$, let
$$
\Delta^1_r a:=a(n+r)-a(n), \quad n\in\N,
$$
 and  for $i\in \N$ define inductively
 $\Delta_r^{i+1}a:=\Delta^1_r(\Delta_r^i a)$.


If  $w\colon \N\to \R_+$ is an eventually  increasing sequence  and $\lim_{n\to\infty}w(n)= +\infty$,  then for every $a\colon \N\to \U$ we
let (for those $N\in \N$ for which  $w(N)\neq 0$)
$$
\wE_{n\in [N]}\, a(n):=\frac{1}{w(N)}\sum_{n=1}^N(w(n+1)-w(n))\, a(n).
$$
For example if $w(n)=n, n\in\N$, then we get the Ces\`aro averages, and if $w(n)=\log{n}, n\in\N$, then we get an averaging scheme equivalent to logarithmic averages.
The next result is a direct consequence of results proved in~\cite{BMR20}.

 \begin{proposition}[\cite{BMR20}]\label{P:NilHardyFormula}
	Let $k,r\in \N$, $d\in \Z^+$, and  $a\colon \R_+\to \R$ be a  Hardy field function such that $t^{d+\varepsilon}\prec a(t)\prec t^{d+1}$ for some $d\in \Z_+$ and $\varepsilon>0$. For $j=1,\ldots,k$ let $a_{j,r}:=\sum_{i=0}^d c_{i,j}\Delta_r^ia$ for some $c_{0,j},\ldots, c_{d,j}\in \R$.
 Let $X=G/\Gamma$ be a nilmanifold and $b\in G$. Then for every $x\in X$ and $h_1,\ldots, h_k\in C(X)$ the limits
 	$$
	\lim_{N\to\infty}\wrE_{n\in [N]}\prod_{j=1}^k h_j(b^{a_{j,r}(n)}\cdot x), \qquad  	\lim_{N\to\infty}\wrE_{n\in [N]} \prod_{j=1}^k h_j(b^{[a_{j,r}(n)]}\cdot x)
 	$$
 	exist and do not depend on $r$, where $w_r:=|\Delta^d_ra|$  (in the first case we assume that $G$ is connected and simply connected).
 \end{proposition}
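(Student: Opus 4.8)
The plan is to deduce everything from the equidistribution theorems for nilflows along Hardy field sequences proved in \cite{BMR20}; the only additional work is a growth analysis of the auxiliary functions $a_{j,r}$ and the weight $w_r$ via Lemma~\ref{L:derivative}, together with a verification that the hypotheses of those theorems are met. It is worth keeping in mind the motivating special case $c_{i,j}=\binom{n_j}{i}$: by Newton's forward difference formula $a(t+rn_j)=\sum_{i\ge 0}\binom{n_j}{i}(\Delta_r^i a)(t)$, and since the high differences are negligible, $a_{j,r}$ is then simply the shift $a(\cdot+rn_j)$ up to an error tending to $0$, which is the form in which the proposition is used to establish strong stationarity.

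First I would record the growth of the finite differences. Since the Hardy field is translation invariant, each $\Delta_r^i a$ is again a Hardy field function, and iterating the mean value theorem together with Lemma~\ref{L:derivative} yields $\Delta_r^i a(t)=r^i a^{(i)}(t)(1+o(1))$ for $0\le i\le d$, while $\Delta_r^{d+1}a(t)\to 0$. From $t^{d+\varepsilon}\prec a(t)\prec t^{d+1}$ and Lemma~\ref{L:derivative} we get $t^\varepsilon\prec a^{(d)}(t)\prec t$, so $w_r=|\Delta_r^d a|$ is, up to the multiplicative constant $r^d$, asymptotic to $|a^{(d)}|$: it is an eventually monotone Hardy field function tending to $+\infty$ sublinearly, which makes the averages $\wrE_{n\in[N]}$ well defined. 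The constant $r^d$ is a first hint of the $r$-independence: in the \emph{normalized} weighted average the increments of $w_r$ are divided by $w_r(N)$, so the factor $r^d$ cancels and the weighting is asymptotically the same for all $r$.

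Next I would pass from the discrete weighted average to a continuous nilflow average. By Abel summation, $\wrE_{n\in[N]}\prod_{j=1}^k h_j(b^{a_{j,r}(n)}x)$ differs by $o(1)$ from the Riemann--Stieltjes average $\frac{1}{|\Delta_r^d a(N)|}\int_1^N \prod_{j=1}^k h_j(b^{a_{j,r}(t)}x)\,d|\Delta_r^d a|(t)$. At this point I would invoke the equidistribution results of \cite{BMR20}, which are proved exactly for one-parameter families $t\mapsto b^{a_{j,r}(t)}x$ on a nilmanifold weighted in this way: they are built on equidistribution on nilmanifolds and the Host--Kra theory~\cite{HK05}, and they give, for \emph{every} $x\in X$, convergence of these averages to an integral over the orbit closure sub-nilmanifold of $X$ (with $b^u$ defined for all $u\in\R$ when $G$ is connected and simply connected), a quantity that does not depend on $r$. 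The integer-part version is handled in the same framework: one lifts to $X\times\T$ via $t\mapsto(b^{a_{j,r}(t)}x,\{a_{j,r}(t)\})$ so that $[a_{j,r}(t)]$ is recovered from the two coordinates, and applies the joint equidistribution of \cite{BMR20}, using that the fractional parts $\{a_{j,r}(n)\}$ equidistribute in the weighted sense.

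The genuine obstacle is the joint $r$-independence when the exponents $a_{j,r}$ themselves depend on $r$ (which they do beyond the leading weight constant, for instance in the shift form $a(\cdot+rn_j)$): this is not visible from the elementary growth estimates and requires the full force of the nilflow equidistribution and structure theory in \cite{BMR20}. Accordingly, the role of the present argument is to verify that the functions $a_{j,r}$ and the weight $w_r$ satisfy the growth hypotheses of those theorems and to carry out the reduction above; the hard equidistribution input, and with it the independence of $r$ and the integer-part statement, is quoted from \cite{BMR20}.
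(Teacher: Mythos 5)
Your proposal is correct and follows essentially the same route as the paper: both reduce the statement to Theorem~4.12 and Corollary~4.13 of \cite{BMR20}, treating the existence of the limits, their independence of $r$, and the integer-part case as input quoted from that source, with only growth verifications added on top. The one detail the paper makes explicit that you gloss over is that \cite{BMR20} proves the result for the base point $x=e_X$, so general $x=g\cdot e_X$ is handled by replacing $b$ with $g^{-1}bg$ and $h_j$ with $h_j(g\,\cdot\,)$; this is routine.
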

\begin{proof}
For $x:=e_X$, 	it is proved in \cite[Theorem~4.12 and Corollary~4.13]{BMR20}  (for $r=1$ but the same argument works for general $r\in \N$) that  the two limits exist and it follows from the proof that  the limit  does not depend on  $r\in \N$.
For general $x\in X$, one writes $x=g\cdot e_X$ for some $g\in G$
and applies the previous result for $b':=g^{-1}bg$ and $h_j'(x):=h_j(gx)$, $x\in X$, for $j=1,\ldots, k$.
	\end{proof}
The next lemma enables us to deduce from Proposition~\ref{P:NilHardyFormula} a similar result for Ces\`aro averages.

\begin{lemma}\label{L:Wr}
	Let $r\in \N$ and $d\in \Z^+$. Let $a\colon \R_+\to \R_+$ be a  Hardy field function such that $t^{d+\varepsilon} \prec a(t)\prec t^{d+1}$ for some $d\in \Z_+$ and $\varepsilon>0$, and let $w:=|\Delta_r^da|$. If $(X,\norm{\cdot})$ is a normed space and   $b\colon \N\to X$ is a bounded sequence such that
	$$
	\lim_{N\to\infty} \wE_{n\in [N]}\,  b(n)=L,
	$$
	then
	$$
	\lim_{N\to\infty}\E_{n\in[N]}\, b(n)=L.
	$$
\end{lemma}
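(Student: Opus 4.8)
The plan is to run an Abel (summation by parts) argument comparing the two averaging schemes, the crucial analytic input being a lower bound on the increments $w(n+1)-w(n)$ coming from the Hardy-field hypothesis $t^{d+\varepsilon}\prec a(t)$. First I would record the asymptotics of $w=|\Delta_r^d a|$. Since $\Delta_r^d a$ is a $\Z$-linear combination of the translates $a(\cdot+jr)$, it lies in the (translation invariant) Hardy field, and by repeated application of Lemma~\ref{L:derivative}(i) one finds $\Delta_r^i a(t)\sim a(t)/t^i$ for $i=0,\dots,d$, so that $t^\varepsilon\prec w(t)\prec t$. In particular $w$ is eventually positive, strictly increasing, and $w(t)\to+\infty$. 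Writing $v(t):=w(t+1)-w(t)$, the second conclusion of Lemma~\ref{L:derivative}(i) applied to $w$ (legitimate since $t^\varepsilon\prec w\prec t$) gives directly $v(t)\sim w(t)/t$; hence $v$ is eventually positive and decreasing to $0$, and, most importantly,
$$\frac{w(N)}{v(N)}\sim N.$$
This is the place where the strict lower bound $t^{d+\varepsilon}\prec a$ is indispensable: for $a\sim t^d\log t$ one would only get $w\sim\log t$ and $w(N)/v(N)\sim N\log N$, and the conclusion would genuinely fail, logarithmic averaging being strictly weaker than Cesàro averaging.

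Next I would reduce to $L=0$. Replacing $b$ by $b-L$ is harmless because both averaging methods fix the constant sequence $L$ in the limit; for $\wE$ this is because $\sum_{n=1}^N v(n)=w(N+1)-w(1)\sim w(N)$. Setting $T(N):=\sum_{n=1}^N v(n)\,b(n)$, the hypothesis becomes $\wE_{n\in[N]}b(n)=T(N)/w(N)\to 0$, and the goal is $B(N):=\sum_{n=1}^N b(n)=o(N)$. Since $v(n)\,b(n)=T(n)-T(n-1)$, summation by parts (carried out for $n$ past the point where $v$ is positive and decreasing, the finitely many initial terms contributing only an $O(1)$ that is absorbed) yields
$$B(N)=\frac{T(N)}{v(N)}-\sum_{n<N}\Big(\frac{1}{v(n+1)}-\frac{1}{v(n)}\Big)T(n)+O(1).$$
Put $\eta_n:=\norm{T(n)}/w(n)$, so that $\eta_n\to 0$ by hypothesis.

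It then remains to bound the two terms by $o(N)$. The first term gives $\norm{T(N)}/v(N)=\eta_N\cdot w(N)/v(N)=\eta_N\cdot O(N)=o(N)$. For the second, monotonicity does the work: $w$ is increasing and $v$ decreasing, so the weights $p_n:=\big(\tfrac{1}{v(n+1)}-\tfrac{1}{v(n)}\big)w(n)$ are nonnegative and, using $w(n)\le w(N)$ for $n<N$,
$$\sum_{n<N}p_n\le w(N)\sum_{n<N}\Big(\frac{1}{v(n+1)}-\frac{1}{v(n)}\Big)\le \frac{w(N)}{v(N)}=O(N).$$
Since $\norm{T(n)}=\eta_n w(n)$, the second term of $B(N)$ is dominated by $\sum_{n<N}p_n\eta_n$; and for any nonnegative weights with $\sum_{n\le N}p_n=O(N)$ and any $\eta_n\to 0$ one has $\sum_{n\le N}p_n\eta_n=o(N)$ (split the sum at a large threshold $M$ and use the partial-sum bound together with $\eta_n\to 0$). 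Combining the two estimates gives $\norm{B(N)}=o(N)$, that is $\E_{n\in[N]}b(n)\to 0=L$, as required.

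The main obstacle is precisely the estimate $w(N)/v(N)=O(N)$; once it is in hand, the summation-by-parts bookkeeping and the passage from $\eta_n\to 0$ to $\sum p_n\eta_n=o(N)$ are routine. I would therefore concentrate the technical care on the Hardy-field derivation of $v(t)\sim w(t)/t$ via Lemma~\ref{L:derivative}(i), emphasising that this is exactly what separates the present setting ($t^{d+\varepsilon}\prec a$) from the borderline logarithmic case where the desired implication is false.
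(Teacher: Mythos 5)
Your proof is correct and follows essentially the same route as the paper: both arguments hinge on the estimate $w(N)=O(N\,(w(N+1)-w(N)))$, obtained from the Hardy-field growth hypothesis $t^{\varepsilon}\prec w(t)\prec t$, followed by an Abel summation comparing the weighted and Ces\`aro averages (a step the paper calls ``a straightforward exercise in partial summation'' and leaves to the reader, which you carry out explicitly and correctly). Your derivation of the key estimate by applying Lemma~\ref{L:derivative}(i) directly to $w$ is a mild streamlining of the paper's l'Hospital-plus-mean-value-theorem computation, but the substance is identical.
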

\begin{proof}
	Note that by Lemma~\ref{L:derivative} we have that
	$t^\varepsilon \prec w(t)\prec t$ and $w$ is eventually increasing. Hence,
	$$\lim_{t\to +\infty} \log(w(t))/\log{t}\neq 0
	$$ and   using l'Hospital's rule we get
	that  $\lim_{t\to +\infty} tw'(t)/w(t)\neq 0$ (all limits exist since $a$ is a Hardy field function). Using the mean value theorem
	twice, that $w''$ is eventually monotonic,  and that $w''(t)\prec w'(t)$, we get that  $\lim_{t\to +\infty}(w(t+1)-w(t))/w'(t)=1$.
We deduce that  $$
	\lim_{t\to +\infty} t(w(t+1)-w(t))/w(t)\neq 0.
	$$ Hence, if we let  $u(t):= w(t+1)-w(t)$, $t\in \R_+$, we have that the sequence $U(n)/(nu(n))$ is bounded, where $U(n):=u(1)+\cdots +u(n)$, $n\in\N$.

		Therefore, we have reduced matters to proving the following elementary statement: Let
		$(X,\norm{\cdot})$ be a normed space and $b\colon \N\to X$ be a bounded sequence.
Let also 		$u\colon \N\to \R_+$ be eventually decreasing,
	$U(n)/(nu(n))$ be  bounded, where $U(n):=u(1)+\cdots+u(n)$, $n\in\N$, and  suppose  that
	$$
	\lim_{N\to\infty}\frac{1}{U(N)} \sum_{n=1}^N\, u(n)b(n)=L.
	$$
	Then
	$$
	\lim_{N\to\infty}\E_{n\in[N]} \, b(n)=L.
	$$
	This is a straightforward  exercise in partial summation. 	
	\end{proof}

Combining the previous two results we get the following:
	\begin{proposition}\label{P:NilHardyFormula'}
	Let $k,r\in \N$, $d\in \Z^+$, and  $a\colon \R_+\to \R$ be a  Hardy field function such that $t^{d+\varepsilon}\prec a(t)\prec t^{d+1}$ for some $d\in \Z_+$ and $\varepsilon>0$.
	Let also $X=G/\Gamma$ be a nilmanifold,  $b\in G$, and $h_1,\ldots, h_k\in C(X)$. Then for every $x\in X$  and $n_1,\ldots, n_k\in \N$ the limits
	\begin{equation}\label{E:identities}
	\lim_{N\to\infty}\E_{n\in[N]}\prod_{j=1}^k h_j(b^{a(n+rn_j)}\cdot x),\qquad  		\lim_{N\to\infty}\E_{n\in[N]}\  \prod_{j=1}^k h_j(b^{[a(n+rn_j)]}\cdot x)
	\end{equation}
	exist and do not depend on $r$  (in the first case we assume that $G$ is connected and simply connected).
\end{proposition}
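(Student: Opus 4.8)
The plan is to deduce both limits from Proposition~\ref{P:NilHardyFormula} and Lemma~\ref{L:Wr}; the only genuine work is to bridge between the shifts $a(\cdot+rn_j)$ appearing here and the finite-difference combinations $\sum_{i=0}^d c_{i,j}\Delta_r^i a$ covered there. We may assume $a(t)\to+\infty$. Since $t^{d+\varepsilon}\prec a(t)\prec t^{d+1}$, iterating Lemma~\ref{L:derivative} gives $a^{(d)}(t)\sim a(t)/t^d$, and then $a^{(d)}(t)\prec t$ forces $a^{(d+1)}(t)\to 0$; by the mean value theorem $\Delta_r^{d+1}a(n)\to 0$, hence $\Delta_r^i a(n)\to 0$ for every $i\ge d+1$. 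Writing $E_r$ for the shift $a\mapsto a(\cdot+r)$, the exact operator identity $E_r^{n_j}=(I+\Delta_r)^{n_j}=\sum_{i=0}^{n_j}\binom{n_j}{i}\Delta_r^i$ yields
\begin{equation*}
a(n+rn_j)=a_{j,r}(n)+\epsilon_{j}(n),\qquad a_{j,r}(n):=\sum_{i=0}^d\binom{n_j}{i}(\Delta_r^i a)(n),
\end{equation*}
where $\epsilon_j(n)=\sum_{i>d}\binom{n_j}{i}(\Delta_r^i a)(n)$ is a fixed finite sum of null sequences, so $\epsilon_j(n)\to 0$. Thus each $a_{j,r}$ has exactly the form required by Proposition~\ref{P:NilHardyFormula}, with $c_{i,j}=\binom{n_j}{i}$ and weight $w_r=|\Delta_r^d a|$.

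For the first limit (with $G$ connected and simply connected, so $b^t$ makes sense for $t\in\R$), Proposition~\ref{P:NilHardyFormula} gives that $\lim_N\wrE_{n\in[N]}\prod_{j=1}^k h_j(b^{a_{j,r}(n)}\cdot x)$ exists and is independent of $r$. Since $X$ is compact, the action is uniformly continuous near $e$: for each $\delta>0$ there is a neighbourhood $U$ of $e$ with $d(g\cdot z,z)<\delta$ for all $g\in U$ and $z\in X$. As $b^{\epsilon_j(n)}\to e$ and the $h_j$ are uniformly continuous on $X$, a telescoping estimate on the product gives $\big|\prod_j h_j(b^{a(n+rn_j)}\cdot x)-\prod_j h_j(b^{a_{j,r}(n)}\cdot x)\big|\to 0$, so the two weighted averages share a limit. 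Finally Lemma~\ref{L:Wr}, whose hypotheses on $w_r=|\Delta_r^d a|$ (namely $t^\varepsilon\prec w_r\prec t$ and $w_r$ eventually increasing) are checked as in its proof, upgrades the weighted limit to the Ces\`aro limit; this is the $r$-independent value sought.

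The integer-part limit proceeds identically save for one delicate point, which I expect to be the crux. Proposition~\ref{P:NilHardyFormula} again supplies existence and $r$-independence of $\lim_N\wrE_{n\in[N]}\prod_j h_j(b^{[a_{j,r}(n)]}\cdot x)$, and by Lemma~\ref{L:Wr} it suffices to show that the weighted average of $\prod_j h_j(b^{[a(n+rn_j)]}\cdot x)$ has the same limit. Here $[a(n+rn_j)]=[a_{j,r}(n)]$ unless $\{a_{j,r}(n)\}$ lies in $[0,|\epsilon_j(n)|)\cup(1-|\epsilon_j(n)|,1)$, so the two products agree off an exceptional set $B_N\subseteq[N]$ and differ on it by at most $2\prod_j\norm{h_j}_\infty$; everything reduces to proving $\wrE_{n\in[N]}\one_{B_N}(n)\to 0$. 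Fixing $\eta>0$, for all large $n$ the set $B_N$ is contained in $\bigcup_j\{n\le N:\{a_{j,r}(n)\}\in[0,\eta)\cup(1-\eta,1)\}$, so the bound follows once each $(a_{j,r}(n))$ is known to be equidistributed $\bmod 1$ with respect to $\wrE$, giving weighted density at most $2k\eta$ and letting $\eta\to 0$. This weighted equidistribution is the one ingredient not immediate from the quoted statements: it is furnished by the equidistribution theorem of \cite{BMR20} underlying Proposition~\ref{P:NilHardyFormula} (specialized to the circle), the hypothesis $t^{d+\varepsilon}\prec a\prec t^{d+1}$ ensuring, via Theorem~\ref{T:Boshernitzan}, that $a_{j,r}$ stays far from every rational polynomial. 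Controlling this boundary set is where the main care is needed; the rest is routine.
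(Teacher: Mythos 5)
Your proposal is correct and follows the same route as the paper: expand the shift via $a(n+rn_j)=(1+\Delta_r)^{n_j}a(n)$, invoke Proposition~\ref{P:NilHardyFormula} for the weighted averages $\wrE$ with $w_r=|\Delta_r^d a|$, and then pass to Ces\`aro averages with Lemma~\ref{L:Wr}. The difference is one of rigor rather than strategy. The paper's own proof is essentially a one-liner that treats $(1+\Delta_r)^{n_j}a$ as directly covered by Proposition~\ref{P:NilHardyFormula}, even though the binomial expansion contains differences $\Delta_r^i a$ with $i>d$ when $n_j>d$, which the literal statement of that proposition does not allow; implicitly it relies on the fact that the results of \cite{BMR20} tolerate these lower-order terms. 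You instead split them off as an explicit error $\epsilon_j(n)\to 0$ (justified correctly via Lemma~\ref{L:derivative} and the mean value theorem) and then run a perturbation argument: uniform continuity of the action near the identity for the $b^{a(n+rn_j)}$ orbit, and a boundary-set estimate for the integer parts, the latter requiring weighted equidistribution of $(a_{j,r}(n))\bmod 1$, which as you say is supplied by the equidistribution machinery of \cite{BMR20} (or, after reducing to Ces\`aro averages, by Theorem~\ref{T:Boshernitzan}, since $a_{j,r}\sim a$ stays logarithmically far from rational polynomials). So your write-up actually patches a step the paper elides; the only soft spot is that the weighted equidistribution is asserted by reference rather than proved, but that is exactly the same level of reliance on \cite{BMR20} that the paper itself accepts.
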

\begin{proof}	
	First note that if $a\colon \N\to \U$ is a sequence, then for every $k, n, r\in \N$ we have that
	$$
	a(n+kr)=(1+\Delta_r)^ka(n).
	$$
	Hence, by Proposition~\ref{P:NilHardyFormula} we get that the limits
		$$
	\lim_{N\to\infty}\wrE_{n\in [N]}\prod_{j=1}^k h_j(b^{a(n+rn_j)}\cdot x),\qquad  		\lim_{N\to\infty}\wrE_{n\in [N]}\  \prod_{j=1}^k h_j(b^{[a(n+rn_j)]}\cdot x)
	$$
	exist and do not depend on $r\in \N$ where $w:=|\Delta_r^da|$. Using Lemma~\ref{L:Wr}, we get the asserted statement.
	\end{proof}

\begin{proof}[Proof of Theorem~\ref{T:NilHardyIterates}]
	It follows from Proposition~\ref{P:NilHardyFormula'} (take $r=1$) that the sequences $(b^{a(n)}x)$
and $(b^{[a(n)]}x)$ admit correlations on $\bN:=([N])_{N\in\N}$;  hence these sequences have unique F-systems. Moreover, since the limits in \eqref{E:identities}   do not depend on $r$, we get by Lemma~\ref{L:sst} that these F-systems are strongly stationary.
	\end{proof}

\subsection{Strong stationarity of Hardy field iterates}

We will use the following result that  follows from Theorem~D and Theorem 4.5 in \cite{BMR20}:
\begin{theorem}[\cite{BMR20}]\label{T:BMR}
 Let $a\colon \R_+\to \R$ be a  Hardy field function such that $t^{d+\varepsilon}\prec a(t)\prec t^{d+1}$ for some $d\in \Z_+$ and $\varepsilon>0$. Then for every ergodic  system $(X,\mu,T)$, $\ell\in \N$, functions $f_1,\ldots, f_\ell\in L^\infty(\mu)$,
 and $n_1,\ldots, n_\ell\in \Z$, the following limit exists
 $$
 \lim_{N\to\infty}\E_{n\in[N]}\prod_{j=1}^\ell T^{[a(n+n_j)]}f_j
 $$
in $L^2(\mu)$.  Furthermore, for $d\in \N$ and $n_1,\ldots, n_\ell$ distinct,  if $\E(f_j|\mathcal{Z})=0$, for some $j\in\{1,\ldots, \ell\}$, where $\mathcal{Z}$ is the infinite-step nilfactor of the system $(X,\mu,T)$,  then the limit is $0$.
\end{theorem}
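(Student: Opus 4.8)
\emph{Overall approach.} Since Theorem~\ref{T:BMR} is imported from \cite{BMR20}, the plan is to assemble it from the two cited results rather than to reprove it from scratch, following the familiar two-step paradigm for multiple ergodic averages: first isolate an algebraic characteristic factor, then evaluate the average there by an equidistribution argument. I would keep the growth hypothesis $t^{d+\varepsilon}\prec a(t)\prec t^{d+1}$ in force throughout, since this is exactly what distinguishes these ``fractional-polynomial'' iterates $[a(n)]$ from honest polynomial iterates.

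\emph{Step 1: characteristic factor and existence.} First I would invoke the Host--Kra structure theory for the ergodic system $(X,\mu,T)$, with its increasing family of factors and their inverse limit, the infinite-step nilfactor $\mathcal{Z}$. The key claim to extract from \cite{BMR20} is that $\mathcal{Z}$ is \emph{characteristic} for the averages $\E_{n\in[N]}\prod_{j=1}^\ell T^{[a(n+n_j)]}f_j$, meaning that replacing each $f_j$ by $\E(f_j|\mathcal{Z})$ leaves the $L^2$-limit unchanged. For polynomial iterates this would be a finite PET induction combined with van der Corput estimates; for the present Hardy iterates it is precisely the seminorm estimate supplied by one of the cited results, which bounds these averages by a single Host--Kra seminorm. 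Granting this, existence of the $L^2$-limit follows by reducing, via the characteristic property, to the pro-nilsystem $\mathcal{Z}$.

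\emph{Step 2: reduction to nilsystems and evaluation.} Having passed to $X=\mathcal{Z}$, an inverse limit of nilsystems, I would reduce by density to a single nilsystem $X=G/\Gamma$ with $T=T_b$ and to continuous $f_j$. Then $\prod_j f_j(T^{[a(n+n_j)]}x)=\prod_j f_j(b^{[a(n+n_j)]}x)$, so both existence and the value of the limit reduce to the joint equidistribution of the orbit $n\mapsto\big(b^{[a(n+n_1)]}x,\dots,b^{[a(n+n_\ell)]}x\big)$ in the product nilmanifold $(G/\Gamma)^\ell$; this joint equidistribution is the other cited result of \cite{BMR20} (compare Proposition~\ref{P:NilHardyFormula}, its single-point, single-factor version). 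For the vanishing statement I would use that when $d\in\N$ and the $n_j$ are distinct, the differences $[a(n+n_i)]-[a(n+n_j)]$ grow like $a(n)/n$ (Lemma~\ref{L:derivative}) and are sufficiently independent that the orbit equidistributes in a subnilmanifold on which integrating a function with $\E(f_j|\mathcal{Z})=0$ against the relevant top nilfactor coordinate yields $0$; hence the whole limit vanishes.

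\emph{Main obstacle.} The genuinely difficult input is Step 1: the seminorm control establishing that the infinite-step nilfactor is characteristic. The obstruction relative to the polynomial case is that the iterated differences $\Delta_r^i a$ are not exact lower-degree polynomials but only asymptotically comparable to such (cf. Lemma~\ref{L:derivative}), so each van der Corput / change-of-variables step produces error terms that must be controlled uniformly in the number of iterations. This is the technical heart of \cite{BMR20}, and I would import it as a black box; my own effort would be confined to correctly combining the two cited statements and checking that their degree and distinctness hypotheses line up with the claim, in particular that the vanishing conclusion genuinely requires $d\geq 1$ and distinct shifts.
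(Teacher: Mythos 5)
The paper's own ``proof'' of Theorem~\ref{T:BMR} is purely a citation plus one conversion step: Theorem~D and Theorem~4.5 of \cite{BMR20} give both the mean convergence and the vanishing statement, but for the \emph{weighted} averages $\wrE_{n\in[N]}$ with weight $w_r=|\Delta_r^d a|$ rather than for the uniform Ces\`aro averages in the statement; the paper then passes from one averaging scheme to the other via Lemma~\ref{L:Wr}, a partial-summation argument that uses $t^\varepsilon\prec w_r(t)\prec t$ (itself a consequence of Lemma~\ref{L:derivative} and the hypothesis $t^{d+\varepsilon}\prec a(t)\prec t^{d+1}$). Your proposal instead reconstructs the internal architecture of \cite{BMR20} --- seminorm control showing $\mathcal{Z}$ is characteristic, reduction to nilsystems, equidistribution --- which is a reasonable sketch of how the source proves its theorems, but it is not the route the paper takes, and as an assembly of the two cited results it omits the one step the paper actually has to supply itself: the weighted-to-Ces\`aro conversion. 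This is not cosmetic, since $w_r(n)\sim a(n)/n^d$ is genuinely non-constant, so the equivalence of the two averaging schemes is a real (if elementary) claim that must be proved; without it your assembly does not yield the theorem as stated.

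Two smaller points. First, the ``furthermore'' clause (vanishing when $\E(f_j|\mathcal{Z})=0$ for distinct $n_j$ and $d\in\N$) \emph{is} the assertion that $\mathcal{Z}$ is characteristic; your Step~2 tries to re-derive it by a separate equidistribution argument on a subnilmanifold after having already granted the characteristic-factor property in Step~1, which is circular in presentation --- that clause should simply be read off from the cited seminorm estimate, as the paper does. Second, your Step~2 existence argument on nilsystems is essentially Proposition~\ref{P:NilHardyFormula}/\ref{P:NilHardyFormula'} of the paper, but note that in the paper those propositions are used later (in the proof of Proposition~\ref{P:sstformula}) and are themselves deduced from the weighted-average results of \cite{BMR20} together with Lemma~\ref{L:Wr}; they are not an independent ingredient you can combine with the seminorm estimate to bypass the averaging-scheme issue.
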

\begin{remark}
	Mean convergence is proved in \cite{BMR20} with  the averages 	$\lim_{N\to\infty}\wrE_{n\in [N]}$
	in place of the averages $\lim_{N\to\infty}\E_{n\in[N]}$. One gets the asserted statement by combining this result with  Lemma~\ref{L:Wr}.
\end{remark}
\begin{proposition}\label{P:sstformula}
Let $a\colon \R_+\to \R$ be a  Hardy field function such that  $t^{d+\varepsilon}\prec a(t)\prec t^{d+1}$ for some $d\in \Z_+$ and $\varepsilon>0$. Then for every system $(X,\mu,T)$, $\ell\in \N$,  functions $f_1,\ldots, f_\ell\in L^\infty(\mu)$,
and $n_1,\ldots, n_\ell\in \Z$, the following limit exists
$$
\lim_{N\to\infty}\E_{n\in[N]}\prod_{j=1}^\ell T^{[a(n+rn_j)]}f_j
$$
in $L^2(\mu)$ and is independent of $r\in \N$.
\end{proposition}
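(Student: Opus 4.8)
The plan is to reduce to the ergodic case, where existence of the limit is a direct instance of Theorem~\ref{T:BMR}, and then to establish the independence from $r$ by combining the characteristic factor property of Theorem~\ref{T:BMR} with the nilsystem equidistribution furnished by Proposition~\ref{P:NilHardyFormula'}.

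First I would reduce to ergodic systems. Set $A_N^{(r)}:=\E_{n\in[N]}\prod_{j=1}^\ell T^{[a(n+rn_j)]}f_j$; these functions are bounded in $L^\infty(\mu)$ by $C:=\prod_{j=1}^\ell\norm{f_j}_{L^\infty(\mu)}$. Let $\mu=\int \mu_y\, d\mu(y)$ be the ergodic decomposition. Applying Theorem~\ref{T:BMR} to each ergodic component with the integers $rn_1,\dots,rn_\ell$ in place of $n_1,\dots,n_\ell$, the sequence $A_N^{(r)}$ converges, hence is Cauchy, in $L^2(\mu_y)$ for almost every $y$. Since $\norm{A_N^{(r)}-A_M^{(r)}}_{L^2(\mu)}^2=\int \norm{A_N^{(r)}-A_M^{(r)}}_{L^2(\mu_y)}^2\, d\mu(y)$, with the integrand dominated by $4C^2$ and tending to $0$ for almost every $y$, dominated convergence shows that $A_N^{(r)}$ is Cauchy in $L^2(\mu)$, so the limit exists. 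As the $L^2(\mu)$-limit agrees $\mu_y$-almost everywhere with the $L^2(\mu_y)$-limit for almost every $y$, it suffices to prove the $r$-independence on each ergodic component; so from now on I assume $(X,\mu,T)$ is ergodic.

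Fix an ergodic system. Whenever two shifts coincide, $n_j=n_{j'}$, I merge the corresponding factors into $T^{[a(n+rn_j)]}(f_jf_{j'})$, so I may assume $n_1,\dots,n_\ell$ are pairwise distinct. If $d=0$ the claim is elementary: by Lemma~\ref{L:derivative}(ii) one has $a(t+rn_j)-a(t)\to 0$, and since $a(t)\prec t$ the number of $n\le N$ with $[a(n+rn_j)]\neq[a(n)]$ for some $j$ is $O(a(N))=o(N)$; hence $A_N^{(r)}=\E_{n\in[N]}T^{[a(n)]}(f_1\cdots f_\ell)+o(1)$ in $L^2(\mu)$, and the main term does not involve $r$. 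If $d\ge 1$ I would invoke the characteristic factor: writing each $f_j=\E(f_j\mid\CZ)+(f_j-\E(f_j\mid\CZ))$ with $\CZ$ the infinite-step nilfactor and expanding the product multilinearly, every term containing a factor of zero conditional expectation has vanishing limit by the second clause of Theorem~\ref{T:BMR} (the shifts $rn_j$ are distinct and $d\in\N$). Thus, for each fixed $r$, $\lim_N A_N^{(r)}$ is unchanged upon replacing every $f_j$ by $\E(f_j\mid\CZ)$, and I may assume all $f_j$ are $\CZ$-measurable.

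It remains to compute on $\CZ$. Realizing $\CZ$ as an inverse limit of ergodic nilsystems, I would approximate each $\CZ$-measurable $f_j$ in $L^2$ by a continuous function pulled back from a single finite-step nilfactor $(X',m_{X'},T_b)$, the resulting errors in $A_N^{(r)}$ being bounded uniformly in $N$ and $r$ by Cauchy--Schwarz and the uniform bound $C$. On the nilsystem $X'=G/\Gamma$ one has $(T_b^{[a(n+rn_j)]}h_j)(x)=h_j(b^{[a(n+rn_j)]}\cdot x)$, so Proposition~\ref{P:NilHardyFormula'} yields, for every $x$ and hence by bounded convergence in $L^2(m_{X'})$, convergence of the averages to a limit independent of $r$. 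Letting the approximation improve forces $\lim_N A_N^{(r)}$ to be independent of $r$, which completes the proof. I expect the main difficulty to lie in this last transfer: the characteristic factor reduction and the nilmanifold approximation must be carried out with all error bounds uniform in $r$, since what is at stake is not merely convergence but the equality of the limits for different $r$.
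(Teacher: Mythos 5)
Your proposal is correct and follows essentially the same route as the paper's proof: ergodic decomposition, the elementary treatment of $d=0$, reduction to the infinite-step nilfactor via the second clause of Theorem~\ref{T:BMR}, approximation by continuous functions on finite-step nilsystems, and finally Proposition~\ref{P:NilHardyFormula'}. The paper states these reductions more tersely, while you usefully make explicit the merging of coincident shifts (needed for the distinctness hypothesis in Theorem~\ref{T:BMR}) and the uniformity in $r$ of the approximation errors.
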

\begin{proof}
	If $d=0$ we have  $\lim_{t\to +\infty}(a(t+1)-a(t))= 0$, and since $a(t)$ is eventually monotonic,  we get for every $h\in \Z$ that  $[a(n+h)]=[a(n)]$ for a set of $n\in\N$ with density $1$.  Therefore, the result is obvious in this case.

Suppose now that $d\in \N$. A standard ergodic decomposition argument allows us to assume that the system is ergodic. Using Theorem~\ref{T:BMR} we can assume that all functions are $\mathcal{Z}$-measurable
where $\mathcal{Z}$ is the infinite-step nilfactor of the system. Using the Host-Kra theory of characteristic factors
\cite{HK05} (see also \cite[Theorem~4.2]{HK18}) and a standard  approximation
argument we can assume that the system is an ergodic nilsystem and the functions are continuous. In this case the result follows from Proposition~\ref{P:NilHardyFormula'}.
\end{proof}

\begin{proof}[Proof of Theorem~\ref{T:SstHardyIterates}]
	Let $N_k\to \infty$ be a strictly increasing sequence of integers,    $(Y,\nu,S)$ be a system, and $\mathcal{G}\subset L^\infty(\mu)$ be a countable collection of functions that is dense in $L^\infty(\mu)$ with the $L^2(\mu)$ norm.
Recall that  mean convergence of a sequence of functions implies pointwise convergence along a subsequence.
With this in mind, using the convergence result of  Theorem~\ref{T:BMR} and a diagonal argument, we get that there exists a subsequence  $(N_k')$ of $(N_k)$ such that for $\nu$-almost every $y\in Y$  and for every $g\in \mathcal{G}$  the sequence $(g(S^{[a(n)]}y))$ admits correlations on $\bN':=([N_k'])$.
Hence, for  almost every
$y\in Y$ for every $g\in \mathcal{G}$ the limit
$$
\E_{n\in\bN'}\prod_{j=1}^\ell g_j(S^{[a(n+rn_j)]}y)
$$
exists for all  $\ell, r\in \N$, $n_1, \ldots, n_\ell\in \Z$, and  $g_1,\ldots, g_\ell \in \{g,\overline{g}\}$. Since this limit coincides with the $L^2(\mu)$-limit,
Proposition~\ref{P:sstformula} implies that for almost every $y\in Y$ it is independent of $r\in \N$. Furthermore, using an approximation argument we get that a similar property holds with the set
 $\mathcal{G}$ replaced with $L^\infty(\nu)$. Using Lemma~\ref{L:sst} we get that for almost every $y\in Y$, for every $g\in L^\infty(\nu)$  the sequence $(g(S^{[a(n)]}y))$ admits correlations on $\bN'$ and the corresponding F-system is strongly stationary. This completes the proof.
	\end{proof}

 \subsection{Proof of Corollary~\ref{C:HardyIterates}}
We prove Part~$(i)$. Suppose that the conclusion fails.  Then for some  $f,g\in L^\infty(\mu)$ there exist  $\varepsilon>0$ and  $N_k\to \infty$ such that
 \begin{equation}\label{E:varepsilon}
 \norm{\E_{n\in[N_k]}\, T^nf\cdot  S^{[a(n)]}g -\E(f|\mathcal{I}_T) \cdot  \E(g|\mathcal{I}_S)}_{L^2(\mu)}\geq \varepsilon
 \end{equation}
 for every $k\in \N$ where $\mathcal{I}_T:=\{h\in L^2(\mu)\colon Th=h\}$ and  $\mathcal{I}_S$ is defined similarly. By Theorem~\ref{T:HardyIterates} there exists a subsequence $(N_k')$ of $(N_k)$ such that for almost every $x\in X$  the  sequence $(g(S^{[a(n)]}x))$ admits correlations on $\bN':=([N'_k])$ and the corresponding F-systems have trivial spectrum and their ergodic components  are isomorphic to direct products of infinite-step nilsystems and Bernoulli systems. Note also that by Proposition~\ref{P:ergodic} for almost  every $x\in X$ the sequence  $(f(T^nx))$ admits correlations on $([N])_{N\in \N}$ and the corresponding F-systems are ergodic and have zero entropy (by assumption).

  It follows from  \cite[Proposition~3.12]{FH18}  that for almost every $x\in X$ the F-system of  the sequence  $(g(S^{[a(n)]}x))$ on $\bN'$
  and the F-system of  the sequence $(f(T^nx))$ on $\bN'$ are disjoint.
   Using a standard disjointness argument we deduce from this that  for almost every $x\in X$ we have
$$
 \E_{n\in \bN'}\, f(T^nx)\cdot  g(S^{[a(n)]}x)=
 \E_{n\in\bN'}\, f(T^nx)\cdot  \E_{n\in \bN'}\,  g(S^{[a(n)]}x).
 $$
  Lastly, note that by the ergodic theorem and \cite{BKQW05}  we have for almost every $x\in X$ that
 $$
 \E_{n\in\N} \, f(T^nx)= \E(f|\mathcal{I}_T)(x), \qquad  \E_{n\in\N} \, g(S^{[a(n)]}x) = \E(g|\mathcal{I}_S)(x).
 $$
 Combining these facts, and using the bounded convergence theorem,  we get a contradiction from \eqref{E:varepsilon}, completing the proof.

 Part~$(ii)$ follows immediately form Part~$(i)$ and the estimate in  \cite[Lemma~1.6]{Chu11}.

\end{document}